\renewcommand*{\backref}[1]{\ifx#1\relax \else Page #1 \fi}
\renewcommand*{\backrefalt}[4]{%
  \ifcase #1 \footnotesize{(Not cited.)}%
  \or        \footnotesize{(Cited on page~#2.)}%
  \else      \footnotesize{(Cited on pages~#2.)}%
  \fi
}
\definecolor{LightCyan}{rgb}{0.88,1,1}
\newtcolorbox{thmbox}{colback=cyan!5,colframe=white}
\newtcolorbox{questionbox}{colback=red!5!white,colframe=white}
\newtcolorbox{updatebox}{colback=white,colframe=black}
\newcommand{\sref}[1]{\textsuperscript{\ref{#1}}}
\newtheorem{theorem}{Theorem}
\newtheorem{lemma}{Lemma}[section]
\newtheorem{assumption}{Assumption}
\theoremstyle{remark}
\newtheorem*{remark}{Remark}
\newcommand{\cC}{\mathcal{C}}
\newcommand{\cD}{\mathcal{D}}
\newcommand{\cG}{\mathcal{G}}
\newcommand{\cL}{\mathcal{L}}
\newcommand{\cO}{\mathcal{O}}
\newcommand{\cX}{\mathcal{X}}
\def\setF{\mathscr{F}} 
\newcommand{\realset}{\mathbb{R}}
\newcommand{\diag}[1]{\mathrm{diag}\left(#1\right)}
\newcommand{\E}{\mathbb{E}}
\newcommand{\norm}[1]{\left\|#1\right\|}
\newcommand{\T}{^\top}
\newcommand{\bfone}{\mathbf{1}}
\newcommand{\red}[1]{\textcolor{red}{#1}}
\newcommand{\blue}[1]{\textcolor{blue}{#1}}
\DeclareMathOperator*{\argmax}{arg\,max\,}
\DeclareMathOperator*{\argmin}{arg\,min\,}
\DeclareMathOperator{\proj}{\Pi}
\def\<#1,#2>{\left\langle #1,#2\right\rangle}
\mathchardef\mhyphen="2D
\title{Optimal Algorithms for Stochastic Bilevel Optimization under Relaxed Smoothness Conditions\footnote{The first two authors contributed equally to this work.}}
\author{Xuxing Chen\thanks{Department of Mathematics, UC Davis. \texttt{xuxchen@ucdavis.edu}. }
\and 
Tesi Xiao\thanks{Department of Statistics, UC Davis. \texttt{texiao@ucdavis.edu}. }
\and Krishnakumar Balasubramanian\thanks{Department of Statistics, UC Davis. \texttt{kbala@ucdavis.edu}. Supported by NSF grant DMS-2053918.} 
}
\begin{document}

\maketitle
\begin{abstract}

Stochastic Bilevel optimization usually involves minimizing an upper-level (\texttt{UL}) function that is dependent on the arg-min of a strongly-convex lower-level (\texttt{LL}) function. Several algorithms utilize Neumann series to approximate certain matrix inverses involved in estimating the implicit gradient of the \texttt{UL} function (hypergradient). The state-of-the-art StOchastic Bilevel Algorithm (\texttt{SOBA}) \cite{dagreou2022framework} instead uses stochastic gradient descent steps to solve the linear system associated with the explicit matrix inversion. This modification enables \texttt{SOBA} to match the lower bound of sample complexity for the single-level counterpart in non-convex settings. Unfortunately, the current analysis of \texttt{SOBA} relies on the assumption of higher-order smoothness for the \texttt{UL} and \texttt{LL} functions to achieve optimality. In this paper, we introduce a novel fully single-loop and Hessian-inversion-free algorithmic framework for stochastic bilevel optimization and present a tighter analysis under standard smoothness assumptions (first-order Lipschitzness of the \texttt{UL} function and second-order Lipschitzness of the \texttt{LL} function). Furthermore, we show that by a slight modification of our approach, our algorithm can handle a more general multi-objective robust bilevel optimization problem. For this case, we obtain the state-of-the-art oracle complexity results demonstrating the generality of both the proposed algorithmic and analytic frameworks. Numerical experiments demonstrate the performance gain of the proposed algorithms over existing ones.

\end{abstract}

\section{Introduction}
Bilevel optimization is gaining increasing popularity within the machine learning community due to its extensive range of applications, including meta-learning~\cite{bertinetto2018meta, franceschi2018bilevel, rajeswaran2019meta, ji2020convergence}, hyperparameter optimization~\cite{bengio2000gradient,franceschi2018bilevel,bertrand2020implicit}, data augmentation~\cite{cubuk2019autoaugment, rommel2022cadda}, and neural architecture search~\cite{liudarts19, zhangadvancing22}. The objective of bilevel optimization is to minimize a function that is dependent on the solution of another optimization problem:
\begin{equation}\label{eq: bo}
    \begin{aligned}
        \min_{x\in \cX \subseteq \realset^{d_x}} \Phi(x) := f(x, y^*(x)) \qquad \text{s.t. }\ y^*(x) = \argmin_{y\in \realset^{d_y}} g(x, y)
    \end{aligned}
\end{equation}
where the upper-level (\texttt{UL}) function $f$ (a.k.a. \emph{outer function}) and the lower-level (\texttt{LL}) function $g$ (a.k.a. \emph{inner function}) are two real-valued functions defined on $\realset^{d_x} \times \realset^{d_y}$. The set $\cX$ is either $\realset^{d_x}$ or a convex compact set in $\realset^{d_x}$, and the \texttt{LL} function $g$ is strongly convex. We call $x$ the \emph{outer variable} and $y$ the \emph{inner variable}. The objective function 
 $\Phi(x)$ is called the \emph{value function}. In this paper, we consider the stochastic setting in which only the stochastic oracles of $f$ and $g$ are available:
 \begin{equation}\label{eq: stochastic setting}
    \mathrm{(\texttt{UL})}\quad f(x, y) = \E_{\xi\sim \cD_f}\left[ F(x, y; \xi) \right],\quad \mathrm{(\texttt{LL})}\quad g(x, y) = \E_{\phi\sim \cD_g}\left[ G(x, y; \phi) \right].
 \end{equation}
 Stochastic bilevel optimization can be considered as an extension of bilevel empirical risk minimization~\cite{dagreou2023lower}, allowing for the effective handling of online and streaming data $(\xi, \phi)$. 
 
 In many instances, the analytical expression of $y^*(x)$ is unknown and can only be approximated using an optimization algorithm. This adds to the complexity of problem \eqref{eq: bo} compared to its single-level counterpart. Under regular conditions such that $\Phi$ is differentiable, the \emph{hypergradient} $\nabla \Phi(x)$ derived by the chain rule and the implicit function theorem is given by
 \begin{equation}\label{eq: hypergradient nabla F}
     \nabla \Phi(x) = \nabla_1 f(x, y^*(x)) - \nabla_{12}^2 g(x, y^*(x)) z^*(x),
 \end{equation}
 where $z^*(x)\in\realset^{d_y}$ is the solution of a linear system:
 \begin{equation}\label{eq: linear system z}
     z^*(x) = \left[\nabla_{22}^{2}g(x,y^*(x))\right]^{-1}\nabla_2 f(x, y^*(x)).
 \end{equation}
Solving \eqref{eq: bo} using only stochastic oracles poses significant challenges since there is no direct unbiased estimator available for $\left[\nabla_{22}^{2}g(x,y^*(x))\right]^{-1}$ and also for $\nabla \Phi(x)$ as a consequence.

To mitigate the estimation bias, many existing methods~\cite{ghadimi2018approximation, ji2021bilevel, yang2021provably, hong2023two, guo2021novel, khanduri2021near, chen2021closing, akhtar2022projection} employ a Hessian Inverse Approximation (\texttt{HIA}) subroutine, which involves drawing a mini-batch of stochastic Hessian matrices and computing a truncated Neumann series~\cite{stewart1998matrix}. However, this subroutine comes with an increased computational burden and introduces an additional factor of $\log(\epsilon^{-1})$ in the sample complexity. Some alternative methods~\cite{chen2022single, guo2021randomized, li2022fully} calculate the explicit inverse of the stochastic Hessian matrix with momentum updates. To circumvent the need for explicit Hessian inversion and the \texttt{HIA} subroutine, recent works~\cite{arbel2022amortized, dagreou2022framework} propose running Stochastic Gradient Descent (\texttt{SGD}) steps to approximate the solution $z^*(x)$ of the linear system \eqref{eq: linear system z}. In particular, the state-of-the-art Stochastic Bilevel Algorithm (\texttt{SOBA}) \emph{only} utilizes SGD steps to simultaneously update three variables: the inner variable $y$, the outer variable $x$, and the auxiliary variable $z$. Remarkably, \texttt{SOBA} achieves the same complexity lower bound of its single-level counterpart ($\Phi \in \cC_{L}^{1,1}$ \footnote[9]{\label{footnote: Smoothness} $\cC_{L}^{p,p}$ denotes $p$-times differentiability with Lipschitz $k$-th order derivatives for $0<k\leq p$.}) in the non-convex setting~\cite{arjevani2023lower}. 

Despite the superior computational and sample efficiency of \texttt{SOBA}, its current theoretical framework assumes high-order smoothness for the \texttt{UL} function $f$ and the \texttt{LL} function $g$ such that $z^*(x)$ has Lipschitz gradient. Specifically, unlike the typical assumptions in stochastic bilevel optimization that state $f\in \cC_{L}^{1,1}$ and $g\in \cC_{L}^{2,2}$ (\texttt{A1}), the current theory of \texttt{SOBA} requires $f\in \cC_{L}^{2,2}$ and $g\in \cC_{L}^{3,3}$ (\texttt{A2}). The necessity of (\texttt{A2}) is counter-intuitive as the partial gradients of $x, y, z$ utilized in constructing SGD steps are already Lipschitz continuous under (\texttt{A1}). Furthermore, assuming $g$ is strongly convex and the partial gradient of the \texttt{UL} function with respect to the inner variable $y$ is bounded for all pairs of $(x, y^*(x))$, (i.e., $\norm{\nabla_2 f(x, y^*(x))}\leq L_f$ for all $x\in\cX$), there exists a subset relation among three function classes as follows (Lemma 2.2 in \cite{ghadimi2018approximation}):
\begin{equation*}
   \mathrm{(\texttt{A2})}\,\,\, \{f\in \cC_{L}^{2,2}, g\in \cC_{L}^{3,3}\} \subset \ \mathrm{(\texttt{A1})}\ \{f\in \cC_{L}^{1,1}, g\in \cC_{L}^{2,2}\} \subset \{\Phi\in \cC_{L}^{1,1}\}.
\end{equation*}
In light of this, it can be concluded that (\texttt{A1}) is sufficient to ensure the first-order Lipschitzness of $\Phi$, which is the standard assumption in the single-level setting. Therefore, a natural question follows:
\begin{questionbox}
    \centering
    Is it possible to develop a \emph{fully single-loop} and \emph{Hessian-inversion-free} algorithm for solving stochastic bilevel optimization problems that achieves an \emph{optimal} sample complexity of $\cO(\epsilon^{-2})$ under \emph{standard smoothness assumptions} $\{f\in \cC_{L}^{1,1}, g\in \cC_{L}^{2,2}\}$\sref{footnote: Smoothness}?
\end{questionbox}
In this paper, we provide an affirmative answer to the aforementioned question. Our \textbf{contributions} can be summarized as follows:
\begin{itemize}[leftmargin=2em]
    \item We propose a class of fully single-loop and Hession-inversion-free algorithm, named Moving-Average \texttt{SOBA} (\texttt{MA-SOBA}), which builds upon the \texttt{SOBA} algorithm by incorporating an additional sequence of average hypergradients. Unlike \texttt{SOBA}, \texttt{MA-SOBA} achieves an optimal sample complexity of $\mathcal{O}(\epsilon^{-2})$ under standard smoothness assumptions, without relying on high-order smoothness. Moreover, the introduced sequence of average hypergradients converges to $\nabla \Phi(x)$, thus offering a reliable termination criterion in the stochastic setting.
    \item We expand the scope of \texttt{MA-SOBA} to tackle a broader class of problems, specifically the min-max multi-objective bilevel optimization problem with significant applications in robust machine learning. We introduce \texttt{MORMA-SOBA}, an algorithm that can find an $\epsilon$-first-order stationary point of the $\mu_\lambda$-strongly-concave regularized formulation while achieving a sample complexity of $\mathcal{O}(n^5\mu_{\lambda}^{-4}\epsilon^{-2})$, which fills a gap (in terms of the order of $\epsilon$-dependency) in the existing literature.
    \item We conduct experiments on several machine learning problems. Our numerical results show the efficiency and superiority of our algorithms.
\end{itemize}

\begin{table}[t]
\centering
\renewcommand{\arraystretch}{1.4}
\caption{Comparison of the stochastic bilevel optimization solvers in the nonconvex-strongly-convex setting under smoothness assumptions \sref{footnote: Smoothness} on $f$ and $g$. We omit the comparison with variance reduction-based methods (\texttt{VRBO}, \texttt{MRBO}~\cite{yang2021provably}; \texttt{SUSTAIN}~\cite{khanduri2021near}; \texttt{SABA}~\cite{yang2021provably}; \texttt{SRBA}~\cite{dagreou2023lower}; \texttt{SVRB}~\cite{guo2021randomized}; \texttt{FLSA}~\cite{li2022fully}; \texttt{SBFW}~\cite{akhtar2022projection}) that may achieve $\cO(\epsilon^{-1.5})$ sample complexity and  under mean-squared smoothness assumptions on stochastic functions $F_\xi$ and $G_\phi$, and \texttt{SBMA}~\cite{guo2021novel} that achieves $\cO(\epsilon^{-4})$ sample complexity.} 
\label{tab:summary}
\resizebox{\textwidth}{!}{%
\begin{tabular}{ c  c c c c c c}
\hline
\rowcolor{gray!20}\Gape[0pt][2pt]{\makecell{Method \\ \blue{(double-loop)}}} & \makecell{Sample\\ Complexity} & (\texttt{UL}) $f$ \tablefootnote[3]{\label{footnote: partial f bound} All methods also assume $\norm{\nabla_2 f(x, y^*(x))}\leq L_f <\infty$ for all $x\in\cX$.} & (\texttt{LL}) $g$ & \Gape[0pt][2pt]{\makecell{Hession\\ Inversion}} & Inner Loop & Batch Size\\
\hline
\texttt{BSA} \cite{ghadimi2018approximation} & $\tilde{\cO}(\epsilon^{-3})$ & $\cC_{L}^{1,1}$  & SC and $\cC_{L}^{2,2}$  & \blue{Neumann approx.} & SGD on inner & $\tilde{\cO}(1)$ \\
\texttt{stocBiO} \cite{ji2021bilevel} & $\tilde{\cO}(\epsilon^{-2})$ & $\cC_{L}^{1,1}$  & SC and $\cC_{L}^{2,2}$  & \blue{Neumann approx.} & SGD on inner & $\tilde{\cO}(\epsilon^{-1})$\\
\tablefootnote[5]{\label{footnote: ALSET vs TTSA} \texttt{ALSET} can achieve convergence without the need for double loops, but it comes at the cost of a worse dependence on $\kappa$ in sample complexity. The mechanisms of single-loop \texttt{ALSET} and \texttt{TTSA} are essentially the same, except that \texttt{ALSET} employs single time-scale stepsizes while \texttt{TTSA} employs two time-scales.}\texttt{ALSET} \cite{chen2021closing} & $\tilde{\cO}(\epsilon^{-2})$ & $\cC_{L}^{1,1}$  & SC and $\cC_{L}^{2,2}$  & \blue{Neumann approx.} & SGD on inner & $\tilde{\cO}(1)$\\
\texttt{AmIGO} \cite{arbel2022amortized} & $\cO(\epsilon^{-2})$ & $\cC_{L}^{1,1}$ & SC and $\cC_{L}^{2,2}$ & SGD & SGD on inner & $\cO(\epsilon^{-1})$\\
\hline
\rowcolor{gray!20}\Gape[0pt][2pt]{\makecell{Method \\ \blue{(single-loop)}}} & \makecell{Sample\\ Complexity} & (\texttt{UL}) $f$\sref{footnote: partial f bound} & (\texttt{LL}) $g$ & \Gape[0pt][2pt]{\makecell{Hession\\ Inversion}} & Inner Step & Batch Size\\
\hline
\sref{footnote: ALSET vs TTSA}\texttt{TTSA} \cite{hong2023two} &  $\tilde{\cO}(\epsilon^{-{2.5}})$ & $\cC_{L}^{1,1}$  & SC and $\cC_{L}^{2,2}$ & \blue{Neumann approx.} & SGD & $\tilde{\cO}(1)$\\
\texttt{STABLE} \cite{chen2022single} &  $\cO(\epsilon^{-2})$ & $\cC_{L}^{1,1}$  & SC and $\cC_{L}^{2,2}$ & \blue{Direct} & SGD & $\cO(1)$\\
\texttt{SOBA} \cite{dagreou2022framework} & $\cO(\epsilon^{-2})$ & $\cC_{L}^{\red{2,2}}$ & SC and $\cC_{L}^{\red{3,3}}$ & SGD & SGD &  $\cO(1)$\\
\rowcolor{LightCyan} \texttt{MA-SOBA} (Alg. \ref{alg:ma-soba}) & $\cO(\epsilon^{-2})$ & $\cC_{L}^{1,1}$ & SC and $\cC_{L}^{2,2}$ & SGD & SGD &  $\cO(1)$\\
\hline
\end{tabular}
}
\footnotesize{~\\The sample complexity corresponds to the number of calls to stochastic gradients and Hessian(Jocobian)-vector products to get an $\epsilon$-stationary point. The $\tilde{\cO}$ notation hides a factor of $\log(\epsilon^{-1})$. ``SC'' means ``strongly-convex''.}
\end{table}

\textbf{Related Work.} The concept of bilevel optimization was initially introduced in the work of \cite{bracken1973mathematical}. Since then, numerous gradient-based bilevel optimization algorithms have been proposed, broadly categorized into two groups: ITerative Differentiation (ITD) based methods~\cite{domke2012generic, maclaurin2015gradient, franceschi2018bilevel, grazzi2020iteration, ji2021bilevel} and Approximate Implicit Differentiation (AID) based methods~\cite{domke2012generic, pedregosa2016hyperparameter, gould2016differentiating, ghadimi2018approximation, grazzi2020iteration, ji2021bilevel, arbel2022amortized}. The ITD-based algorithms typically involve approximating the solution of the inner problem using an iterative algorithm and then computing an approximate hypergradient through automatic differentiation. However, a major drawback of this approach is the necessity of storing each iterate of the inner optimization algorithm in memory. The AID-based algorithms leverage the implicit gradient given by \eqref{eq: hypergradient nabla F}, which requires the solution of a linear system characterized by \eqref{eq: linear system z}. Extensive research has been conducted on designing and analyzing deterministic bilevel optimization algorithms with strongly-convex \texttt{LL} functions; see \cite{ji2021bilevel} and the references cited therein. 

In recent years, there has been a growing interest in stochastic bilevel optimization, especially in the setting of a non-convex \texttt{UL} function and a strongly-convex \texttt{LL} function. To address estimation bias, one set of methods uses SGD iterations for the inner problem and employs truncated stochastic Neumann series to approximate the inverse of the Hessian matrix in $z^*(x)$~\cite{ghadimi2018approximation, ji2021bilevel, yang2021provably, hong2023two, guo2021novel, khanduri2021near, chen2021closing, akhtar2022projection}. The analysis of such methods was refined by \cite{chen2021closing} to achieve convergence rates similar to those of SGD. However, the Neumann approximation subroutine introduces an additional factor of $\log(\epsilon^{-1})$ in the sample complexity. Some alternative approaches~\cite{arbel2022amortized, chen2022single, guo2021randomized, li2022fully} calculate the explicit inverse of the stochastic Hessian matrix with momentum updates. Nevertheless, these methods encounter challenges related to computational complexity in matrix inversion and numerical stability. 

To avoid the need for explicit Hessian inversion and the Neumann approximation, recent algorithms~\cite{arbel2022amortized, dagreou2022framework} propose running SGD steps to approximate the solution $z^*(x)$ of the linear system \eqref{eq: linear system z}. One such algorithm called \texttt{AmIGO} \cite{arbel2022amortized} employs a double-loop approach and achieves an optimal sample complexity of $\cO(\epsilon^{-2})$ under regular assumptions. However, \texttt{AmIGO} requires a growing batch size inversely proportional to $\epsilon$. On the other hand, the single-loop algorithm \texttt{SOBA} \cite{dagreou2022framework} achieves the same complexity lower bound but with constant batch size. Unfortunately, the current analysis of \texttt{SOBA} relies on the assumption of higher-order smoothness for the \texttt{UL} and \texttt{LL} functions. In this work, we introduce a novel algorithm framework that differs slightly from \texttt{SOBA} but can achieve optimal sample complexity in theory without higher-order smoothness assumptions. A summary of our results and comparison to prior work is provided in Table~\ref{tab:summary}.

In addition, there exist several variance reduction-based methods following the line of research by \cite{yang2021provably, khanduri2021near, yang2021provably, dagreou2023lower, guo2021randomized, li2022fully}. Some of these methods achieve an improved sample complexity of $\mathcal{O}(\epsilon^{-1.5})$ and match the lower bounds of their single-level counterparts when stochastic functions $F_\xi$ and $G_\phi$ satisfy mean-squared smoothness assumptions and the algorithm is allowed simultaneous queries at the same random seed \cite{arjevani2023lower}. However, since we are specifically considering smoothness assumptions on $f$ and $g$, we will not delve into the comparison with these methods.

The most recent advancements in (stochastic) bilevel optimization focus on several new ideas: (i) addressing constrained lower-level problems~\cite{shen2023penalty, xiao2023alternating, tsaknakis2022implicit, giovannelli2021inexact}, (ii) handling lower-level problems that lack strong convexity~\cite{chen2023bilevel, huang2023momentum, liu2023averaged, liu2021towards, sow2022constrained, jiang2023conditional}, (iii) developing fully first-order (Hesssian-free) algorithms~\cite{liu2022bome, kwon2023fully, sow2022convergence}, (iv) establishing convergence to the second-order stationary point~\cite{huang2022efficiently}, and (v) expanding the framework to encompass multi-objective optimization problems~\cite{giovannelli2023bilevel, gu2022min, hu2022multi}. It is promising to apply some of these advancements to our specific framework. However, in this work, we contribute to multi-objective bilevel problems with a slight modification of our approach. Other directions are left as future work.

\textbf{Notation.} We use $\norm{\cdot}$ for $\ell^2$ norm. $\bfone_n$ denotes the all-one vector in $\realset^n$. $\Delta_n = \{\lambda\mid \lambda_i\geq 0, \sum_{i=1}^{n}\lambda_i=1\}$ denotes the probability simplex. $\Pi_{\cX}(\cdot)$ denotes the orthogonal projection onto $\cX$. 

\section{Proposed Framework: the MA-SOBA Algorithm}

Similar to \cite{dagreou2022framework, arbel2022amortized}, our algorithm initiates with inexact hypergradient descent techniques and seeks to offer an alternative in the stochastic setting. To provide a clear illustration, let us initially consider the deterministic setting. The \texttt{SOBA} framework keeps track of three sequences, denoted as $\{x^k, y^k, z^k\}$, and updates them using $D_x, D_y, D_z$ as follows:
\begin{align}
   \textbf{(inner)}\quad \blue{y^{k+1}} & = \blue{y^k - \beta_k~\boxed{\nabla_2 g(x^k, y^k)} = y^k - \beta_k D_y(x^k, y^k, z^k)}\label{eq: y update mean}\\
   \textbf{(aux)}\quad  \blue{z^{k+1}} &=  z^k - \gamma_k\left\{\nabla_{22}^2g(x^k,y^*(x^k))z^k - \nabla_2 f(x^k,y^*(x^k))\right\}\notag\\
    \red{\textbf{bias} \rightarrow}&\approx \blue{z^k - \gamma_k~\boxed{\left\{\nabla_{22}^2g(x^k,y^k)z^k - \nabla_2 f(x^k,y^k)\right\}} = z^k -\gamma_k D_z(x^k, y^k, z^k)}  \label{eq: z update mean}\\
    \textbf{(outer)}\quad \blue{x^{k+1}}  &= x^{k} - \alpha_k\left\{\nabla_1 f(x^k, y^*(x^k)) - \nabla_{12}^2 g(x^k, y^*(x^k)) z^*(x^k)\right\}  = x^{k} - \alpha_k \nabla \Phi(x^k) \notag\\
    \red{\textbf{bias} \rightarrow}&\approx \blue{ x^{k} - \alpha_k~\boxed{\left\{\nabla_1 f(x^k, y^k) - \nabla_{12}^2 g(x^k, y^k) z^k\right\}} = x^k - \alpha_k D_x(x^k, y^k, z^k)} \label{eq: x update mean}
\end{align}
where \eqref{eq: y update mean} is the GD step to minimize $g(x^k, \cdot)$, \eqref{eq: x update mean} is the inexact hyper gradient descent step, and \eqref{eq: z update mean} is the GD step to minimize a quadratic minimization problem with $z^*(x^k)$ being the solution, i.e., 
\begin{equation*}
    z^*(x^k) = \underset{z}{\argmin}~\frac{1}{2}\<\nabla_{22}^2g(x^k,y^*(x^k))z, z> - \<\nabla_2 f(x^k,y^*(x^k)), z>.
\end{equation*}
Given that the above update rule, highlighted in blue, does not involve the Hessian matrix inversion, \texttt{SOBA} can directly utilize the stochastic oracles of $\nabla_1 f, \nabla_2 f, \nabla_2 g, \nabla_{22}^2 g, \nabla_{12}^2 g$ to obtain unbiased estimators of $D_x, D_y, D_z$ in Eq.\eqref{eq: y update mean}, \eqref{eq: z update mean}, \eqref{eq: x update mean}. This approach circumvents the requirement for a Neumann approximation subroutine or a direct matrix inversion. However, due to the update rule for $y$, which only utilizes one-step SGD at each iteration $k$, the value of $y^k$ does not coincide with $y^*(x^k)$. As a result, a certain bias is introduced in the partial gradient of $z$ in Eq.\eqref{eq: z update mean}. Similarly, when estimating the hypergradient $\nabla \Phi(x)$, another bias term arises in Eq.\eqref{eq: x update mean}. Although the bias decreases to zero as $y^k\rightarrow y^*(x^k)$ and $z^k\rightarrow z^*(x^k)$ under standard smoothness assumptions as indicated by Lemma 3.4 in \cite{dagreou2022framework}, the current analysis of \texttt{SOBA} requires more regularity on $f$ and $g$ to carefully handle the bias; it assume that $f$ has Lipschitz Hessian and $g$ has Lipschitz third-order derivative.

The inability to obtain an unbiased gradient estimator is a common characteristic in stochastic optimization involving nested structures; see, for example, stochastic compositional optimization \cite{wang2017stochastic, yang2019multilevel, ghadimi2020single, balasubramanian2022stochastic, chen2021solving} as a specific case of \eqref{eq: bo}. One popular approach is to introduce a sequence of dual variables that approximates the true gradient by aggregating all past biased stochastic gradients using a moving averaging technique \cite{ghadimi2020single, balasubramanian2022stochastic, xiao2022projection}. Motivated by this approach, we introduce another sequence of variables, denoted as $\{h^k\}$, and update it at $k$-th iteration given the past iterates $\setF_k$ as follows:
\begin{equation*}
    h^{k+1} = (1-\theta_k) h^k + \theta_k w^{k+1},\quad \E\left[w^{k+1}\middle|\setF_k\right] =  D_x(x^k, y^k, z^k),\quad \theta_k\in(0,1].
\end{equation*}
Following the update rule in the constrained setting ($\cX\subset\realset^{d_x}$) \cite{ghadimi2020single}, the outer variable is updated as $x^{k+1} = x^k + \alpha_k\left(\Pi_{\cX}(x^k - \tau h^k) - x^k\right)$, which is reduced to the GD step when $\cX\equiv\realset^{d_x}$. Denote the stochastic oracles of $\nabla_1 f(x^k, y^k), \nabla_2 f(x^k, y^k), \nabla_2 g(x^k, y^k), \nabla_{22}^2 g(x^k, y^k), \nabla_{12}^2 g(x^k, y^k)$ at $k$-th iteration as $u_x^{k+1}, u_y^{k+1}, v^{k+1}, H^{k+1}, J^{k+1}$ respectively. We present our method, referred to as \textbf{M}oving-\textbf{A}verage \texttt{SOBA} (\texttt{MA-SOBA}), in Algorithm \ref{alg:ma-soba}.

\begin{algorithm}[t]
    \caption{\texttt{Moving-Average SOBA}}\label{alg:ma-soba}
    \SetAlgoLined
    \KwIn{$x^0, y^0, z^0, h^0=0, \{\alpha_k\}, \{\beta_k\}, \{\gamma_k\}, \{\theta_k\}$}
    \For{$k=0, 1,\dots, K-1$}{
        $x^{k+1} = x^k + \alpha_k\left(\Pi_{\cX}(x^k - \tau h^k) - x^k\right)$ \hfill \# update $x^k$ using the average hypergradient $h^k$\\
        $y^{k+1} = y^k - \beta_k v^{k+1}$ \hfill \# update $y^k$ by one-step SGD based on \eqref{eq: y update mean}\\
        $z^{k+1} = z^k - \gamma_k(H^{k+1}z^{k} - u_{y}^{k+1})$ \hfill \# update $z^k$ by one-step SGD based on \eqref{eq: z update mean}\\
        $h^{k+1} = (1 - \theta_k)h^{k} + \theta_k \left(u_x^{k+1} - J^{k+1}z^k\right)$ \hfill \# update the average hypergradient $h^k$ 
    }
\end{algorithm}



\section{Theoretical Analysis}

In this section, we provide convergence rates of \texttt{MA-SOBA} under \emph{standard} smoothness conditions on $f, g$ and \emph{regular} assumptions on stochastic oracles. We also present a proof sketch and have detailed discussions about assumptions made in the literature. The complete proofs are deferred in Appendix.

\subsection{Preliminaries and Assumptions}

As we consider the general setting in which $\cX$ can be either $\realset^{d_x}$ or a closed compact set in $\realset^{d_x}$, we use the notion of gradient mapping to characterize the first-order stationarity, which is a classical measure widely used in the literature as a convergence criterion when solving nonconvex constrained problems \cite{nesterov2018lectures}. For $\tau>0$, we define the gradient mapping of at point $\bar x \in \cX$ as $\cG_{\cX}(\bar{x}, \nabla \Phi(\bar{x}), \tau) \coloneqq \frac{1}{\tau} (\bar{x} - \proj_\cX (\bar x - \tau \nabla \Phi(\bar{x})))$. When $\cX \equiv \realset^d$, the gradient mapping simplifies to $\nabla \Phi(\bar{x})$. Our main goal in this work is to find an $\epsilon$-stationary solution to~\eqref{eq: bo}, in the sense of  $\E[\| \mathcal{G}_{\cX}(\bar{x}, \nabla \Phi(\bar{x}), \tau)\|^2]\leq \epsilon$.

We first state some regularity assumptions on the functions $f$ and $g$. 
\begin{assumption}\label{aspt: smoothness} The functions $f$ and $g$ satisfy:
\begin{itemize}[leftmargin=2em, itemsep=2pt]
    \item[(a)] \textnormal{\bf ($f\in\cC_L^{1,1}$ and $g\in\cC_L^{2,2}$)\sref{footnote: Smoothness}} $\nabla f, \nabla g, \nabla^2 g$ are $L_{\nabla f}, L_{\nabla g}, L_{\nabla^2 g}$ Lipschitz continuous respectively.
    \item[(b)]  \textnormal{\bf (SC \texttt{LL})} $g$ is $\mu_{g}$-strongly convex.
    \item[(c)] $\norm{\nabla_2 f(x, y^*(x))} \leq L_f < \infty$ for all $x\in\cX$.
\end{itemize}
\end{assumption}
\begin{remark}
    The above assumption serves as a sufficient condition for the Lipschitz continuity of $\nabla \Phi$, $y^*(x)$, and $z^*(x)$, as well as $D_x$, $D_y$, and $D_z$ in Eq. \eqref{eq: y update mean}, \eqref{eq: z update mean}, \eqref{eq: x update mean}. The inclusion of high-order smoothness assumptions ($f\in\cC_L^{2,2}$ and $g\in\cC_L^{3,3}$) in the current analysis of \texttt{SOBA} \cite{dagreou2022framework} is primarily intended to ensure the Lipschitzness of $\nabla z^*(x)$. However, the necessity of such assumptions is subject to doubt, given that $\nabla z^*(x)$ is not involved in designing the algorithm. Furthermore, the Lipschitzness of $f$ or uniformly boundedness of $\nabla_2 f$ made in several previous works is unnecessary. Instead, the boundedness assumption on $\nabla_2 f$ is only required for all pairs of $(x,y^*(x))$ as demonstrated by (c). 
\end{remark}
Next, we discuss assumptions made on the stochastic oracles.
\begin{assumption}\label{aspt: stochastic_grad}
    For any $k\geq 0$, define $\setF_k$ denotes the sigma algebra generated by all iterates with superscripts not greater than $k$, i.e., $\setF_k = \sigma\left\{h^1, \dots, h^k, x^1,\dots, x^k, y^1, \dots, y^k, z^1,\dots,z^k\right\}$.
    The \textbf{stochastic oracles} of $\nabla_1 f(x^k, y^k), \nabla_2 f(x^k, y^k), \nabla_2 g(x^k, y^k), \nabla_{22}^2 g(x^k, y^k), \nabla_{12}^2 g(x^k, y^k)$, denoted as $u_x^{k+1}, u_y^{k+1}, v^{k+1}, H^{k+1}, J^{k+1}$ respectively, used in Algorithm \ref{algo: momasoba} at $k$-th iteration are \textbf{unbiased} with \textbf{bounded variance} given $\setF_k$. They are conditionally \textbf{independent} with respect to $\setF_k$.
\end{assumption}
\begin{remark}
    The unbiasedness and bounded variance assumptions on stochastic oracles are standard and typically satisfied in several practical stochastic optimization problems \cite{lan2020first}. It is important to highlight that we explicitly impose these assumptions on the stochastic oracles, unlike Assumption 3.6 in \cite{dagreou2022framework}, which assumes $\E[\|v^{k+1}\|^2|\setF_k] \leq B_y^2 (1+ \|D_y(x^k, y^k, z^k)\|^2)$ and $\E[\|H^{k+1}z^{k} - u_y^{k+1}\|^2|\setF_k] \leq B_z^2 (1+ \|D_z(x^k, y^k, z^k)\|^2)$. In this case, $B_y$ and $B_z$ represent constants in terms of the Lipschitz constants ($L$) and variance bounds ($\sigma^2$). Moreover, Assumption 3.7 in \cite{dagreou2022framework} assumes $\E[\|w^{k+1}\|^2|\setF_k] \leq B_x^2$ holds for a constant $B_x$, which is considerably stronger than our assumptions and may not hold for a broad class of problems.
\end{remark}

\subsection{Convergence Results}
We have the following theorem characterizing the convergence results of \texttt{MA-SOBA}.
\begin{thmbox}
\begin{theorem}\label{thm: soba_convergence}
    Define $x_+^k = \Pi_{\cX}(x^k - \tau h^k)$. Suppose Assumptions \ref{aspt: smoothness} and \ref{aspt: stochastic_grad} hold. Then there exist positive constants $c_1, c_2, c_3, \tau>0$ such that if $\alpha_k \equiv \Theta(1/\sqrt{K})$, $\beta_k = c_1\alpha_k, \gamma_k = c_2\alpha_k, \theta_k = c_3\alpha_k,$ in Algorithm \ref{alg:ma-soba}, then the iterates in Algorithm \ref{alg:ma-soba} satisfy
    \begin{equation}\label{eq: optimality_measure_thm}
        \frac{1}{K}\sum_{k=1}^{K}\frac{1}{\tau^2}\norm{x_+^k - x^k}^2 = \cO\left(\frac{1}{\sqrt{K}}\right),\qquad \frac{1}{K}\sum_{k=1}^{K}\norm{h^k - \nabla \Phi(x^k)}^2 = \cO\left(\frac{1}{\sqrt{K}}\right),
    \end{equation}
    which imply
    \[
        \frac{1}{K}\sum_{k=1}^{K}\E\left[\norm{\frac{1}{\tau} \left(x^k - \Pi_{\cX}(x^k - \tau \nabla \Phi(x^k))\right)}^2\right] = \cO\left(\frac{1}{\sqrt{K}}\right).
    \]
    That is to say, when uniformly randomly selecting a solution $x^R$ from $\{x^1, \dots, x^K\}$, the sample complexity of Algorithm \ref{alg:ma-soba} for finding an $\epsilon$-stationary point is $\mathcal{O}(\epsilon^{-2})$.
\end{theorem}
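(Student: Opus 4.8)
The plan is to control a single Lyapunov potential that combines the upper-level value with the three ``inexactness'' quantities of the method, and to show it decreases by $\Theta(\alpha_k)$ times the optimality measure up to an additive $\cO(\alpha_k^2)$ noise. Writing $e_k := h^k - \nabla\Phi(x^k)$ and $x_+^k := \Pi_\cX(x^k - \tau h^k)$, I would work with
\[
V_k := \E[\Phi(x^k) - \Phi^*] + c_y\,\E\|y^k - y^*(x^k)\|^2 + c_z\,\E\|z^k - z^*(x^k)\|^2 + c_h\,\E\|e_k\|^2 ,
\]
where $\Phi^* := \inf_\cX \Phi > -\infty$ (automatic when $\cX$ is compact) and $c_y,c_z,c_h>0$ are to be chosen. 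Throughout I would use the consequences of Assumption~\ref{aspt: smoothness} recorded in the remark following it: $\Phi\in\cC_L^{1,1}$ with some modulus $L_\Phi$; $y^*$ and $z^*$ are Lipschitz; $\|z^*(x)\|\le L_f/\mu_g$; and---crucially without any Lipschitzness of $\nabla z^*$---the bias estimates $\|D_z(x^k,y^k,z^k) - \nabla_{22}^2 g(x^k,y^*(x^k))z^*(x^k)\| \lesssim \|y^k-y^*(x^k)\|$ and $\|D_x(x^k,y^k,z^k) - \nabla\Phi(x^k)\|^2 \lesssim \|y^k-y^*(x^k)\|^2 + \|z^k-z^*(x^k)\|^2$, each following from first-order Lipschitzness of $f$, second-order Lipschitzness of $g$, and $\mu_g$-strong convexity.

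Next I would establish four one-step inequalities, always conditioning on $\setF_k$ and using that $x^{k+1}$ (hence $\nabla\Phi(x^{k+1})$) is $\setF_k$-measurable, so the only surviving stochastic terms are genuine oracle variances. (i) \emph{Descent:} $L_\Phi$-smoothness together with the projection inequality $\langle h^k, x_+^k - x^k\rangle \le -\tfrac1\tau\|x_+^k-x^k\|^2$ and Young's inequality on $\langle\nabla\Phi(x^k)-h^k, x_+^k-x^k\rangle$ give, for $\alpha_k$ small, $\Phi(x^{k+1}) \le \Phi(x^k) - \tfrac{\alpha_k}{4\tau}\|x_+^k-x^k\|^2 + \tfrac{\alpha_k\tau}{2}\|e_k\|^2$. (ii) \emph{Inner error:} strong convexity of $g(x^k,\cdot)$ yields the SGD contraction $\E[\|y^{k+1}-y^*(x^k)\|^2\mid\setF_k] \le (1-\mu_g\beta_k)\|y^k-y^*(x^k)\|^2 + \cO(\beta_k^2)$, and combining with $\|y^*(x^{k+1})-y^*(x^k)\| \le L_{y^*}\alpha_k\|x_+^k-x^k\|$ via a weighted triangle inequality gives $\E[\|y^{k+1}-y^*(x^{k+1})\|^2\mid\setF_k] \le (1-\tfrac{\mu_g\beta_k}{2})\|y^k-y^*(x^k)\|^2 + \cO(\alpha_k)\|x_+^k-x^k\|^2 + \cO(\beta_k^2)$. (iii) \emph{Auxiliary error:} writing $D_z(x^k,y^k,z^k) = \nabla_{22}^2 g(x^k,y^k)(z^k-z^*(x^k)) + r_k$ with $\|r_k\| \lesssim \|y^k-y^*(x^k)\|$ and using $\nabla_{22}^2 g \succeq \mu_g I$, after bounding the $\|z^k\|^2$-dependent variance of $H^{k+1}z^k - u_y^{k+1}$ via $\|z^k\|^2 \le 2\|z^k-z^*(x^k)\|^2 + 2L_f^2/\mu_g^2$, I obtain $\E[\|z^{k+1}-z^*(x^{k+1})\|^2\mid\setF_k] \le (1-\tfrac{\mu_g\gamma_k}{4})\|z^k-z^*(x^k)\|^2 + \cO(\gamma_k)\|y^k-y^*(x^k)\|^2 + \cO(\alpha_k)\|x_+^k-x^k\|^2 + \cO(\gamma_k^2)$. (iv) \emph{Moving-average error:} expanding $e_{k+1} = (1-\theta_k)e_k + \theta_k(\text{noise}) + \theta_k\big(D_x - \nabla\Phi(x^k)\big) + \big(\nabla\Phi(x^k)-\nabla\Phi(x^{k+1})\big)$, squaring, and using the bias bound on $D_x-\nabla\Phi(x^k)$, the estimate $\|\nabla\Phi(x^k)-\nabla\Phi(x^{k+1})\| \le L_\Phi\alpha_k\|x_+^k-x^k\|$, and the $\|z^k\|^2$-controlled variance of $u_x^{k+1}-J^{k+1}z^k$, I get $\E[\|e_{k+1}\|^2\mid\setF_k] \le (1-\theta_k)\|e_k\|^2 + \cO(\theta_k)\big(\|y^k-y^*(x^k)\|^2+\|z^k-z^*(x^k)\|^2\big) + \cO(\alpha_k)\|x_+^k-x^k\|^2 + \cO(\alpha_k^2)$.

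Summing (i)--(iv) with weights $1,c_y,c_z,c_h$ and substituting $\beta_k = c_1\alpha_k$, $\gamma_k = c_2\alpha_k$, $\theta_k = c_3\alpha_k$, the coefficient of $\E\|x_+^k-x^k\|^2$ is $\alpha_k\big(-\tfrac1{4\tau} + C(c_y/c_1+c_z/c_2+c_h/c_3)\big)$, that of $\E\|e_k\|^2$ is $\alpha_k\big(\tfrac\tau2 - c_h c_3\big)$, that of $\E\|y^k-y^*(x^k)\|^2$ is $\alpha_k\big(-\tfrac{\mu_g c_1 c_y}{2} + C'(c_z c_2 + c_h c_3)\big)$, and that of $\E\|z^k-z^*(x^k)\|^2$ is $\alpha_k\big(-\tfrac{\mu_g c_2 c_z}{4} + C''c_h c_3\big)$, with a residual additive $\cO(\alpha_k^2)$. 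A hierarchical choice of constants makes all four negative: fix $c_1=c_2=c_3=1$; pick $c_h$ just above $\tau/2$; then $c_z$ large relative to $c_h$; then $c_y$ large relative to $c_z$ and $c_h$; and finally $\tau$ small enough that the $\E\|x_+^k-x^k\|^2$ coefficient is at most $-\tfrac{\alpha_k}{8\tau}$. Since $\alpha_k\equiv\Theta(1/\sqrt K)$, every smallness requirement on $\beta_k,\gamma_k,\theta_k$ needed for the SGD contractions holds once $K$ is large. This yields $V_{k+1}-V_k \le -c\,\alpha_k\big(\tfrac1{\tau^2}\E\|x_+^k-x^k\|^2 + \E\|e_k\|^2\big) + \cO(\alpha_k^2)$ for some $c>0$; telescoping over $k=1,\dots,K$, using $V_{K+1}\ge 0$ and $\sum_k\alpha_k^2 = \Theta(1)$, $\sum_k\alpha_k = \Theta(\sqrt K)$, gives $\tfrac1K\sum_k\big(\tfrac1{\tau^2}\E\|x_+^k-x^k\|^2 + \E\|e_k\|^2\big) = \cO\big(1/(\alpha_k K)\big) = \cO(1/\sqrt K)$, which are exactly the two bounds in \eqref{eq: optimality_measure_thm}. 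The gradient-mapping bound follows from nonexpansiveness of $\Pi_\cX$: $\|\cG_\cX(x^k,\nabla\Phi(x^k),\tau) - \tfrac1\tau(x^k-x_+^k)\| \le \|e_k\|$, hence $\E\|\cG_\cX(x^k,\nabla\Phi(x^k),\tau)\|^2 \le \tfrac2{\tau^2}\E\|x^k-x_+^k\|^2 + 2\E\|e_k\|^2$, and averaging gives $\cO(1/\sqrt K)$. Drawing $R$ uniform on $\{1,\dots,K\}$ and setting $K = \Theta(\epsilon^{-2})$, with $\cO(1)$ oracle calls per iteration, yields the $\cO(\epsilon^{-2})$ sample complexity.

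I expect the main obstacle to be the interplay between two things. First, proving the bias-to-tracking-error bounds on $D_x$ and $D_z$ under only $f\in\cC_L^{1,1}$, $g\in\cC_L^{2,2}$ (plus strong convexity), with no recourse to Lipschitzness of $\nabla z^*$: this is precisely where the relaxed-smoothness improvement over \texttt{SOBA} is gained, and it is the moving-average iterate $h^k$ that makes a mere first-order bias bound suffice, in contrast to the Lipschitz-hypergradient-estimate bound the original \texttt{SOBA} analysis needs. Second, closing the loop among the four mutually coupled recursions on a \emph{single} time-scale---the $h$-recursion feeds on both tracking errors and on $\|x_+^k-x^k\|^2$ through $\nabla\Phi(x^k)-\nabla\Phi(x^{k+1})$, and the $z$-recursion feeds on the $y$-error---so one must verify that every cross term carries a spare factor of $\alpha_k$ (or can be absorbed by shrinking $\tau$ and tuning the ratios $c_i$), so that a compatible set of Lyapunov weights and a small enough $\tau$ genuinely exist. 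The $\|z^k\|^2$-dependence of the Hessian/Jacobian-vector-product noise is a secondary nuisance, tamed by the uniform bound $\|z^*(x)\|\le L_f/\mu_g$ and by the observation that those terms enter at relative order $\alpha_k^2$.
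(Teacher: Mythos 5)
Your proposal is correct and follows essentially the same route as the paper: the same four coupled one-step recursions (smoothness-based descent for $\Phi$ driven by $x_+^k-x^k$, SGD contractions for the $y$- and $z$-tracking errors with drift controlled by $\alpha_k\|x_+^k-x^k\|$ via Lipschitzness of $y^*$ and $z^*$ only, and the moving-average recursion for $h^k$ with bias bounded by the tracking errors under Assumption \ref{aspt: smoothness} alone), the same single-timescale stepsize coupling and hierarchical choice of constants, the same treatment of the $\|z^k\|$-dependent oracle variance through $\|z^*(x)\|\le L_f/\mu_g$, and the same final passage to the gradient mapping by nonexpansiveness of $\Pi_{\cX}$. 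The only difference is bookkeeping: the paper's merit function subtracts the auxiliary term $\eta_{\cX}(x^k,h^k,\tau)$ in the style of \cite{ghadimi2020single} and handles the dual error $\|h^k-\nabla\Phi(x^k)\|^2$ in a separately summed lemma, whereas you fold all four quantities into one weighted Lyapunov function and absorb the projected step through $\<h^k,{x_+^k-x^k}>\le -\tfrac{1}{\tau}\|x_+^k-x^k\|^2$ at the price of a $\tfrac{\tau}{2}\|e_k\|^2$ term, which is an equivalent accounting.
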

\end{thmbox}

\begin{remark}
    In contrast to most existing methods, in \texttt{MA-SOBA}, the introduced sequence of dual variables $\{h^k\}$ converges to the exact hypergradient $\nabla \Phi(x)$, even in the presence of estimation bias. This attribute provides reliable terminating criteria in practice. In addition, similar results with an extra factor of $\log(K)$ in the convergence rate can be established under decreasing $\alpha_k$ \cite{dagreou2022framework}. 
\end{remark}

\subsection{Proof Sketch of Theorem \ref{thm: soba_convergence}}\label{sec: bo_analysis} 
Define $V_k = \frac{1}{\tau^2}\|x_+^k - x^k\|^2 + \|h^k - \nabla \Phi(x^k)\|^2$. To obtain \eqref{eq: optimality_measure_thm}, we consider the merit function $W_k$: 
\begin{equation*}
    W_k = \Phi(x^k) - \eta_{\cX}(x^k, h^k, \tau) + \|y^k - y_*^k\|^2 + \|z^k - z_*^k\|^2,
\end{equation*}
where $\eta_{\cX}(x,h,\tau)= \<h, x_+ - x> + \frac{1}{2\tau}\norm{x_+-x}^2$.
By leveraging the moving average updates of $x^k$ (line 2 of Algorithm \ref{alg:ma-soba}), we can obtain
\[
    \sum_{k=0}^{K}\alpha_k\E\left[V_k\right] =\cO\left(\sum_{k=0}^{K}\left(\alpha_k\E\left[\norm{\E\left[w^{k+1}\middle|\setF_k\right] - \nabla \Phi(x^k)}^2\right] + \alpha_k^2\right)\right),
\]
which reduces the error analysis to controlling the hypergradient estimation bias, i.e., $\|\E[w^{k+1}|\setF_k] - \nabla \Phi(x^k)\|^2.$
This term, by the construction of $w^{k+1}$, satisfies
\begin{equation*}
\resizebox{\hsize}{!}{$
\sum_{k=0}^{K}\alpha_k\E\left[\norm{\E\left[w^{k+1}\middle|\setF_k\right] - \nabla \Phi(x^k)}^2\right]= \cO\left(\sum_{k=0}^{K}\alpha_k\E\left[\norm{x_+^k - x^k}^2 + \norm{y^k-y_*^k}^2 + \norm{z^k - z_*^k}^2\right]\right).$}
\end{equation*}
It is worth noting that \cite{dagreou2022framework} requires the existence and Lipschitzness of $\nabla^2 f$ and $\nabla^3g$ to ensure the Lipschitzness of $\nabla z^*(x)$ (see \eqref{eq: linear system z}) which is used in proving the sufficient decrease of $\norm{z^k - z_*^k}^2$. In contrast, based on the moving average updates of $x^k$ and $h^k$, our refined analysis does not necessitate such high-order smoothness assumptions to obtain that
\[
    \sum_{k=0}^{K}\alpha_k\E\left[\norm{y^k-y_*^k}^2 + \norm{z^k - z_*^k}^2\right] = \cO\left(\sum_{k=0}^{K}\alpha_k\E\left[\norm{x_+^k - x^k}^2\right]\right).
\]
The proof of Theorem \ref{thm: soba_convergence} can then be completed by choosing appropriate $\alpha_k, c_1, c_2, c_3, \tau > 0$.
\section{Min-Max Bilevel Optimization}

To incorporate robustness in the multi-objective setting where each objective can be expressed as a bilevel optimization problem in \eqref{eq: bo}, the following mini-max bilevel problem formulation was proposed in \cite{gu2022min}:
\begin{equation}\label{eq: minmaxbo}
        \min_{x\in \cX}\ \max_{1\leq i \leq n } \ \Phi_i(x) \coloneqq f_i(x, y_i^*(x)) \qquad \text{s.t. }\ y_i^*(x) = \argmin_{y_i\in \realset^{d_{y_i}}} g_i(x, y_i), 1\leq i\leq n.
\end{equation}
Note that~\eqref{eq: minmaxbo} can be reformulated as a general nonconvex-concave min-max optimization problem (with a bilevel substructure):
\begin{equation}\label{eq: minmaxbo_reform}
        \min_{x\in \cX}\ \max_{\lambda\in \Delta_n}\Phi(x,\lambda) \coloneqq \sum_{i=1}^{n}\lambda_i \Phi_i(x).
\end{equation}
Instead of solving \eqref{eq: minmaxbo_reform} directly, in this work, we focus on solving the following regularized version,
\begin{equation}\label{eq: minmaxbo_reform_reg}
        \min_{x\in \cX}\ \max_{\lambda\in \Delta_n}\Phi_{\mu_{\lambda}}(x,\lambda) \coloneqq \Phi(x,\lambda) - \frac{\mu_{\lambda}}{2}\norm{\lambda - \frac{\bfone_n}{n}}^2.
\end{equation}
Note that in \eqref{eq: minmaxbo_reform_reg}, we include an $\ell^2$ regularization term that penalizes the discrepancy between $\lambda$ and $\frac{\bfone_n}{n}$. When $\mu_{\lambda} = 0$, it corresponds to equation \eqref{eq: minmaxbo}, and as $\mu_{\lambda}\rightarrow +\infty$, it enforces $\lambda = \frac{\bfone_n}{n}$, leading to direct minimizing of the average loss. It is important to note that minimizing the worst-case loss (i.e., $\max_{1\leq i\leq n}f_i(x,y_i^*(x))$) does not necessarily imply the minimization of the average loss (i.e., $\frac{1}{n}\sum_{i=1}^{n}f_i(x, y_i^*(x))$). Therefore, in practice, it may be preferable to select an appropriate $\mu_{\lambda}>0$ \cite{qian2019robust, wang2021adversarial} to strike a balance between these two types of losses.
\subsection{Proposed Framework: the MORMA-SOBA Algorithm}
The proposed algorithm, which we refer as to \textbf{M}ulti-\textbf{O}bjective \textbf{R}obust \texttt{MA-SOBA} (\texttt{MORMA-SOBA}), for solving \eqref{eq: minmaxbo_reform_reg} is presented in \ref{algo: momasoba}. In addition to the basic framework of Algorithm \ref{alg:ma-soba}, we also maintain a moving average step in the updates of $\lambda^k$ for solving the max part of problem \ref{thm: momasoba_convergence}. It is worth noting that in its single-level counterpart without the inner variable $y$, the proposed \texttt{MORMA-SOBA} algorithm is fundamentally similar to the single-timescale averaged \texttt{SGDA} algorithm proposed in the general nonconvex-strongly-concave setting~\cite{qiu2020single}. Moreover, our algorithm framework can be leveraged to solve the distributionally robust compositional optimization problem discussed in \cite{gao2021convergence}.
\begin{remark}[Comparison with \texttt{MORBiT} \cite{gu2022min}]
    In contrast to our approach in \eqref{eq: minmaxbo_reform_reg}, the work of~\cite{gu2022min}, for the min-max bilevel problem, attempted to combine \texttt{TTSA}~\cite{hong2023two} and \texttt{SGDA}~\cite{lin2020gradient} to solve the nonconvex-concave problem as \eqref{eq: minmaxbo_reform}. However, \textit{we identified an issue in} \cite{gu2022min} related to the ambiguity and inconsistency in the expectation and filtration, which may not be easily resolved within their current proof framework. As a consequence, their current proof is unable to demonstrate $\E[\max_{i\in [n]}\|y_i^{k} - y_i^*(x^{(k-1)})\|^2 ]\leq \tilde{\cO}(\sqrt{n}K^{-2/5})$ as claimed in Theorem 1 (10b) of \cite{gu2022min}. Thus, the subsequent arguments made regarding the convergence analysis of $x$ and $\lambda$ are incorrect (at least in its current form); see Section \ref{sec: app_discussion} for further discussions. Moreover, the practical implementation of \texttt{MORBiT} incorporates momentum and weight decay techniques to optimize the simplex variable $\lambda$. This approach can be seen as a means of solving the regularized formulation in \eqref{eq: minmaxbo_reform_reg}.
\end{remark}

\begin{algorithm}[t]
    \caption{\texttt{Multi-Objective Robust Moving-Average SOBA}}\label{algo: momasoba}
    \SetAlgoLined
    \KwIn{$x^0,\lambda^0, \{y^0_i\}, \{z^0_i\}, h_x^0=0, h_\lambda^0=0, \{\alpha_k\}, \{\beta_k\}, \{\gamma_k\}, \{\theta_k\}$}
    \For{$k=0, 1,\dots, K-1$}{
        $x^{k+1} = x^k + \alpha_k\left(\Pi_{\cX}(x^k - \tau_x h_x^k) - x^k\right)$\hfill \# update $x^k$ using the average hypergradient $h_x^k$\\
        $\lambda^{k+1} = \lambda^k + \alpha_k \left(\Pi_{\Delta_n}(\lambda^k + \tau_{\lambda}h_{\lambda}^k) - \lambda^k\right)$ \hfill \# update $\lambda^k$ using the average gradient $h_\lambda^k$ \\
        \For{$i=1, \dots, n$ (in parallel)}{
        $y_i^{k+1} = y_i^k - \beta_k v_i^{k+1}$ \hfill \# update each $y_i^k$ by one-step SGD based on \eqref{eq: y update mean}\\
        $z_i^{k+1} = z_i^k - \gamma_k\left(H_i^{k+1}z_i^k - u_{y, i}^{k+1}\right)$ \hfill \# update each $z_i^k$ by one-step SGD based on \eqref{eq: z update mean}\\
        }
        $h_x^{k+1} = (1 - \theta_k)h_x^k + \theta_k \sum_{i=1}^{n}\lambda_i^k\left(u_{x,i}^{k+1} - J_i^{k+1}z_i^k\right)$  \hfill \# update the average hypergradient $h_x^k$\\
        $h_{\lambda}^{k+1} = (1 - \theta_k)h_{\lambda}^k + \theta_k\left(s^{k+1} - \mu_{\lambda}\left(\lambda^k - \frac{\bfone_n}{n}\right)\right)$ \hfill \# update the average gradient $h_\lambda^k$
    }
\end{algorithm}

\subsection{Convergence Results}\label{sec: convergence_momasoba}
We first present additional assumptions required in the analysis of \texttt{MORMA-SOBA}. 
\begin{assumption}\label{aspt: f_bound}
    For any $k\geq 0$, functions $\Phi(x), \nabla \Phi_i(x)$ are bounded, functions $f_i$ are $L_f$-Lipschitze continuous in the second input, and their stochastic versions are unbiased with bounded variance, i.e., there exists $L_{\Phi}, L_f, \sigma_{f,0}\geq 0$ such that 
    \begin{align*}
        &\left|\Phi_i(x)\right|\leq b_{\Phi},\ \norm{\nabla \Phi_i(x)}\leq L_{\Phi},\ \left|f_i(x, y) - f_i(x,\tilde y)\right|\leq L_f\norm{y - \tilde y},\ \text{ for all } x, y, \tilde y, 1\leq i\leq n, \\
        &s^{k+1} = \left(s_1^{k+1}, ..., s_n^{k+1}\right)\T,\ \E\left[s_i^{k+1}\middle|\setF_k\right] = f_i(x^k, y_i^k),\ \E\left[ \norm{s_i^{k+1} -f_i(x^k, y_i^k)}^2\middle|\setF_k\right]\leq \sigma_{f,0}^2.
    \end{align*}
    $\bigcup_{i=1}^{n}\left\{u_{x, i}^{k+1}, u_{y, i}^{k+1}, v_i^{k+1}, H_i^{k+1}, J_i^{k+1}\right\}\cup \left\{s^{k+1}\right\}$ are conditionally independent with respect to $\setF_k$.
\end{assumption}
We have the following convergence theorem of \texttt{MORMA-SOBA}.
\begin{thmbox}
\begin{theorem}\label{thm: momasoba_convergence}
    Suppose Assumptions \ref{aspt: smoothness}, \ref{aspt: stochastic_grad} (for all $f_i, g_i$) and Assumption \ref{aspt: f_bound} hold. Then there exist positive constants $c_1, c_2, c_3, \tau_x, \tau_{\lambda}>0$ such that if $\alpha_k \equiv \Theta(1/\sqrt{nK}), \beta_k = c_1\alpha_k, \gamma_k = c_2\alpha_k, \theta_k = c_3\alpha_k, \mu_{\lambda}<1$ in Algorithm \ref{algo: momasoba}, then the iterates in Algorithm \ref{algo: momasoba} satisfy
    \[
        \frac{1}{K}\sum_{k=0}^{K}\E\left[\norm{\frac{1}{\tau_x}\left(x^k - \Pi_{\cX}\left(x^k - \tau_x\nabla\Psi_{\mu_{\lambda}}(x^k)\right) \right)}^2\right] = \cO\left(\frac{n^2}{\mu_{\lambda}^2\sqrt{K}}\right), 
    \]
    where $\Psi_{\mu_{\lambda}}(x):= \max_{\lambda\in\Delta_n}\Phi_{\mu_{\lambda}}(x, \lambda)$. That is to say, when uniformly randomly selecting a solution $x^R$ from $\{x^1, \dots, x^K\}$, the sample complexity (the total number of calls to stochastic oracles) of finding an $\epsilon$-stationary point by Algorithm \ref{algo: momasoba} is $\cO(n^5\mu_{\lambda}^{-4}\epsilon^{-2})$.
\end{theorem}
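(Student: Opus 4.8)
The plan is to reduce the statement to a gradient-mapping bound for the primal value function $\Psi_{\mu_\lambda}(x)=\max_{\lambda\in\Delta_n}\Phi_{\mu_\lambda}(x,\lambda)$, and then run an analysis parallel to that of Theorem~\ref{thm: soba_convergence}, augmented with the dual ($\lambda$) dynamics. First I would record the structural consequences of $\mu_\lambda$-strong concavity in $\lambda$: the maximizer $\lambda^*(x)=\argmax_{\lambda\in\Delta_n}\Phi_{\mu_\lambda}(x,\lambda)$ is unique and $\mathcal{O}(\sqrt{n}/\mu_\lambda)$-Lipschitz in $x$ (the $\sqrt n$ coming from $\|\cdot\|_1\le\sqrt n\|\cdot\|_2$ on the simplex and the fact that the $x$–$\lambda$ coupling matrix has columns $\nabla\Phi_i(x)$), and, by Danskin's theorem, $\nabla\Psi_{\mu_\lambda}(x)=\nabla_x\Phi_{\mu_\lambda}(x,\lambda^*(x))=\sum_i\lambda_i^*(x)\nabla\Phi_i(x)$ is $L_\Psi=\mathcal{O}(n/\mu_\lambda)$-Lipschitz. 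With $x_+^k:=\Pi_\cX(x^k-\tau_x h_x^k)$, the target quantity $\frac1{\tau_x^2}\|x^k-\Pi_\cX(x^k-\tau_x\nabla\Psi_{\mu_\lambda}(x^k))\|^2$ is, by the same nonexpansiveness argument as in Theorem~\ref{thm: soba_convergence}, controlled by $V_k:=\frac1{\tau_x^2}\|x_+^k-x^k\|^2+\|h_x^k-\nabla\Psi_{\mu_\lambda}(x^k)\|^2$.

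Next I would introduce the merit function
\[
 W_k=\Psi_{\mu_\lambda}(x^k)-\eta_\cX(x^k,h_x^k,\tau_x)+\|\lambda^k-\lambda^*(x^k)\|^2+\sum_{i=1}^n\Big(\|y_i^k-y_i^*(x^k)\|^2+\|z_i^k-z_i^*(x^k)\|^2\Big),
\]
with $\eta_\cX$ as in the proof sketch of Theorem~\ref{thm: soba_convergence} (possibly with an extra weighted copy of $\|h_\lambda^k-\nabla_\lambda\Phi_{\mu_\lambda}(x^k,\lambda^k)\|^2$), and establish the per-iteration building blocks: (i) an $x$-descent inequality obtained from $L_\Psi$-smoothness and the moving-average structure of $h_x^k$, decreasing $\Psi_{\mu_\lambda}(x^k)-\eta_\cX$ by $\Omega(\alpha_k)V_k$ up to $\mathcal{O}(\alpha_k)$ times the hypergradient bias and $\mathcal{O}(\alpha_k^2)$ variance terms, exactly as in Theorem~\ref{thm: soba_convergence}; (ii) a dual-tracking contraction: since the $\lambda$-update is one step of projected gradient ascent on the $\mu_\lambda$-strongly-concave map $\Phi_{\mu_\lambda}(x^k,\cdot)$ toward $\lambda^*(x^k)$ and $\lambda^*(\cdot)$ is Lipschitz, $\|\lambda^{k+1}-\lambda^*(x^{k+1})\|^2\le(1-c\mu_\lambda\alpha_k)\|\lambda^k-\lambda^*(x^k)\|^2+\mathcal{O}(\alpha_k)\|h_\lambda^k-\nabla_\lambda\Phi_{\mu_\lambda}(x^k,\lambda^k)\|^2+\mathcal{O}(n\alpha_k\mu_\lambda^{-3})\|x_+^k-x^k\|^2+\mathcal{O}(\alpha_k^2)$; (iii) inner/auxiliary contractions for each $(y_i^k,z_i^k)$ toward $(y_i^*(x^k),z_i^*(x^k))$, identical to Theorem~\ref{thm: soba_convergence} since those updates are untouched; (iv) moving-average error recursions for $h_x^k$ and $h_\lambda^k$ of the usual form $(1-\theta_k)\|\cdot\|^2+\mathcal{O}(\theta_k)(\mathrm{bias})^2+\mathcal{O}(\theta_k^{-1})\|x^{k+1}-x^k\|^2+\mathcal{O}(\theta_k^2)$. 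The crucial link is the hypergradient-bias decomposition
\[
 \big\|\E[\text{$h_x$-increment}\mid\setF_k]-\nabla\Psi_{\mu_\lambda}(x^k)\big\|^2=\mathcal{O}\!\Big(n\,\|\lambda^k-\lambda^*(x^k)\|^2+\sum_{i=1}^n\big(\|y_i^k-y_i^*(x^k)\|^2+\|z_i^k-z_i^*(x^k)\|^2\big)\Big),
\]
obtained by splitting along the difference of $\lambda$-weights (bounded per-objective hypergradients times $\|\lambda^k-\lambda^*(x^k)\|_1\le\sqrt n\|\lambda^k-\lambda^*(x^k)\|_2$) and along $y_i^k-y_i^*(x^k)$, $z_i^k-z_i^*(x^k)$ (using Jensen over the simplex weights, so no $n$-blowup there), together with the analogous and simpler bound for the $\lambda$-increment bias in terms of $\sum_i\|y_i^k-y_i^*(x^k)\|^2$.

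I would then form the Lyapunov combination from $W_k$ plus weighted copies of the moving-average errors, fix $\beta_k=c_1\alpha_k$, $\gamma_k=c_2\alpha_k$, $\theta_k=c_3\alpha_k$ and constants $c_1,c_2,c_3,\tau_x,\tau_\lambda$ so that the contraction rates $\Omega(\mu_\lambda\alpha_k)$ for $\lambda$ and $\Omega(\mu_g\beta_k)$ for the $y_i,z_i$ dominate all drift terms (the $\|x^{k+1}-x^k\|^2=\alpha_k^2\|x_+^k-x^k\|^2$ pieces being beaten by $\alpha_k V_k$ once $\alpha_k$ is small), telescope from $k=0$ to $K$, and divide by $\sum_k\alpha_k$. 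With $\alpha_k\equiv\Theta(1/\sqrt{nK})$ this yields $\frac1K\sum_k\E[V_k]=\mathcal{O}(n^2/(\mu_\lambda^2\sqrt K))$, hence the stated gradient-mapping bound. For the sample complexity, $\mathcal{O}(n^2/(\mu_\lambda^2\sqrt K))\le\epsilon$ forces $K=\mathcal{O}(n^4\mu_\lambda^{-4}\epsilon^{-2})$, and since each iteration issues $\Theta(n)$ stochastic-oracle calls (one per parallel inner problem), the total is $\mathcal{O}(n^5\mu_\lambda^{-4}\epsilon^{-2})$.

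The main obstacle I anticipate is the single-timescale coupling: $x$, $\lambda$, all $y_i$'s, all $z_i$'s, and both moving averages advance with stepsizes proportional to the same $\alpha_k$, yet the bias driving the $x$-descent depends on $\|\lambda^k-\lambda^*(x^k)\|^2$, whose contraction is itself polluted by the motion of $x^k$, which depends on $h_x^k$, whose error depends on the bias. Breaking this circular dependence requires choosing $c_1,c_2,c_3,\tau_x,\tau_\lambda$ so that the coupled linear recursion in $\big(\E[V_k],\,\E\|\lambda^k-\lambda^*(x^k)\|^2,\,\E\sum_i\|y_i^k-y_i^*(x^k)\|^2,\,\E\sum_i\|z_i^k-z_i^*(x^k)\|^2,\,\E\|h_\lambda^k-\nabla_\lambda\Phi_{\mu_\lambda}\|^2\big)$ has a contractive system matrix; it is precisely the $\mu_\lambda^{-1}$ and $n$ factors in the off-diagonal couplings (Lipschitz constant of $\lambda^*$, smoothness of $\Psi_{\mu_\lambda}$, the $\sqrt n$ in the bias) that propagate into the final $n^2\mu_\lambda^{-2}$ rate, and getting those powers sharp rather than merely finite is the delicate part.
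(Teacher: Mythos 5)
Your proposal follows essentially the same route as the paper's proof: reduce the gradient mapping of $\Psi_{\mu_\lambda}$ to a moving-average optimality measure, reuse the Theorem~\ref{thm: soba_convergence} machinery (merit function with $\eta_{\cX}$, per-objective $y_i,z_i$ contractions, moving-average error recursions, the bias decomposition with the $nL_\Phi^2\|\lambda^k-\lambda_*^k\|^2$ term), and resolve the coupled $x$--$\lambda$ recursions with $\tau_x=\cO(\mu_\lambda/n)$-type scalings before converting $K$ into oracle calls with the extra factor of $n$. The only cosmetic difference is that you track $\|\lambda^k-\lambda^*(x^k)\|^2$ directly via a projected-ascent contraction, whereas the paper carries the primal-dual gap $\Psi(x^k)-\Phi_{\mu_\lambda}(x^k,\lambda^k)-\tfrac{1}{c_3}\eta_{\Delta_n}$ in its merit function and recovers $\|\lambda^k-\lambda_*^k\|^2$ from $\|\lambda_+^k-\lambda^k\|^2$ and the dual error through strong concavity (Lemmas~\ref{lem: coercivity} and \ref{lem: lambda_convergence}); both give the same $n^2\mu_\lambda^{-2}$ scaling.
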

\end{thmbox}

Theorem \ref{thm: momasoba_convergence} indicates that Algorithm \ref{algo: momasoba} is capable of generating an $\epsilon$-first-order stationary point of $\min_x \Psi_{\mu_{\lambda}}(x)$ with $K\gtrsim n^5\mu_{\lambda}^{-4}\epsilon^{-2}$. As $\mu_{\lambda}\rightarrow 0$, the problem \eqref{eq: minmaxbo_reform_reg} changes towards the nonconvex-concave problem \eqref{eq: minmaxbo_reform} and the sample complexity becomes worse, which to some extent implies the difficulty of directly solving \eqref{eq: minmaxbo_reform}. Define $x_+^k = \Pi_{\cX}(x^k - \tau_x h_x^k),\ \lambda_+^k = \Pi_{\Delta_n}(\lambda^k + \tau_{\lambda}h_{\lambda}^k)$. Our proof of Theorem \ref{thm: momasoba_convergence} mainly relies on analyzing $\tilde V_k$ as follows:
\[
    \tilde V_k = \frac{1}{\tau_x^2}\|x_+^k - x^k\|^2 + \|h_x^k - \nabla_1\Phi_{\mu_{\lambda}}(x^k,\lambda^k)\|^2 + \frac{1}{\tau_{\lambda}^2}\|\lambda_+^k  - \lambda^k\|^2 + \|h_{\lambda}^k - \nabla_2\Phi_{\mu_{\lambda}}(x^k, \lambda^k)\|^2,
\]
which is commonly used in min-max optimization (see, e.g., \cite{qiu2020single}). The construction of the first two terms follows the same idea in Section~\ref{sec: bo_analysis}, and for the last two terms we have
\[
    \tau_{\lambda}^2\mu_{\lambda}^2\norm{\lambda^k - \lambda_*^k}^2 = \cO\left(\norm{\lambda_+^k - \lambda^k}^2 + \tau_{\lambda}^2\norm{h_{\lambda}^k - \nabla_2\Phi_{\mu_{\lambda}}(x^k, \lambda^k)}^2\right).
\]
Therefore, $\tilde V_k = 0$ implies $x^k = \Pi_{\cX}\left(x^k - \tau_x\nabla_1\Phi_{\mu_{\lambda}}(x^k, \lambda^k)\right)$ as well as $\lambda^k = \lambda_*^k$, which further imply the gradient mapping of problem $\min_x\Psi_{\mu_{\lambda}}(x)$ at $x^k$ is $0$, and hence the validity of our optimality measure. We defer the proof details to Section \ref{sec: thm2proof} in the Appendix. 

\begin{remark}
    Note that in Theorem~\ref{thm: momasoba_convergence} we explicitly characterize the dependency on $n$ and $\mu_{\lambda}$ in the convergence rate and the sample complexity. It is worth noting that two variants of stochastic gradient descent ascent (\texttt{SGDA}) algorithms for solving the nonconvex-strongly-concave min-max optimization problems (without bilevel substructures), 
    have been studied in~\cite{lin2020gradient, qiu2020single}. While such algorithms are not immediately applicable to solve~\eqref{eq: minmaxbo_reform_reg} due to the presence of the additional bilevel substructure, it is instructive to compare to those methods assuming direct access to $y_i^*(x)$ in \eqref{eq: minmaxbo}. Specifically, we  observe that the sample complexity of \texttt{SGDA} with batch size $M=\Theta(n^{1.5}\epsilon^{-1})$ in \cite{lin2020gradient} and moving-average \texttt{SGDA} with $\cO(1)$ batch size in~\cite{qiu2020single} for solving \eqref{eq: minmaxbo_reform_reg} assuming direct access to $y_i^*(x)$ will be $\cO\left(n^4\mu_{\lambda}^{-2}\epsilon^{-2}\right)$ and $\cO\left(n^5\mu_{\lambda}^{-4}\epsilon^{-2}\right)$\footnote{Note that $\Phi_{\mu_{\lambda}}(x, \lambda)$ in \eqref{eq: minmaxbo_reform} is quadratic in $\lambda$, and these two sample complexities are obtained under this special case, i.e., $\nabla_2^2f(x,y) = -\mu \mathbf{I}$ applied to \cite{lin2020gradient, qiu2020single}.} respectively. Our results in Theorem \ref{thm: momasoba_convergence} indicate that the sample complexity of the proposed algorithm \texttt{MORMA-SOBA} for solving min-max bilevel problems has the same dependency on $n$ and $\mu_\lambda$ as the sample complexity of the moving-average \texttt{SGDA} introduced in~\cite{qiu2020single} for solving min-max single-level problems, while also computing $y_i^*(x)$ instead of assuming direct access. 
\end{remark}

\section{Experiments}

While our contributions primarily focus on theoretical aspects, we also conducted experiments to validate our results. We first compare the performance of \texttt{MA-SOBA} with other benchmark methods on two common tasks proposed in previous works~\cite{ji2021bilevel, hong2023two, dagreou2022framework}, \emph{hyperparameter optimization} for $\ell^2$ penalized logistic regression and \emph{data hyper-cleaning} on the corrupted MNIST dataset. Our experiments are performed with the aid of the recently developed package \texttt{Benchopt}~\cite{moreau2022benchopt} and the open-sourced bilevel optimization benchmark\footnote{\label{footnote: bilevel benchopt}\url{https://github.com/benchopt/benchmark_bilevel}}. For a fair comparison, we exclusively consider benchmark methods that do not utilize variance reduction techniques in Table \ref{tab:summary}: (i) \texttt{BSA}~\cite{ghadimi2018approximation}; (ii) \texttt{stocBiO}~\cite{ji2021bilevel}; (iii) \texttt{TTSA}~\cite{hong2023two}/\texttt{ALSET}~\cite{chen2021closing}; (iv) \texttt{SOBA}~\cite{dagreou2022framework}. Noting that \texttt{ALSET} only differs from \texttt{TTSA} regarding time scales, we use \texttt{TTSA} to represent this class of approach. Also, we omit the comparison with \texttt{AmIGO}~\cite{arbel2022amortized} below, given that it is essentially a double-loop \texttt{SOBA} with increasing batch sizes. Detailed setups and additional experiments on all other methods are deferred in Appendix. The tunable parameters in benchmark methods are selected in the same manner as those in \texttt{benchmark\_bilevel}\sref{footnote: bilevel benchopt}.

\begin{figure}[t]
    \centering
    \addtocounter{subfigure}{-1}
    \subfigure{\includegraphics[width=0.5\textwidth,trim={0 8.5cm 0 0},clip]{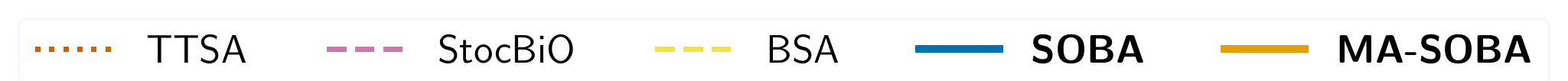}}\\
    \subfigure[$\ell^2$ Penalized Logistic Regression on IJCNN1]{\label{fig: logreg_ijcnn}\includegraphics[width=0.44\textwidth, trim={0 0 0 1.4cm},clip]{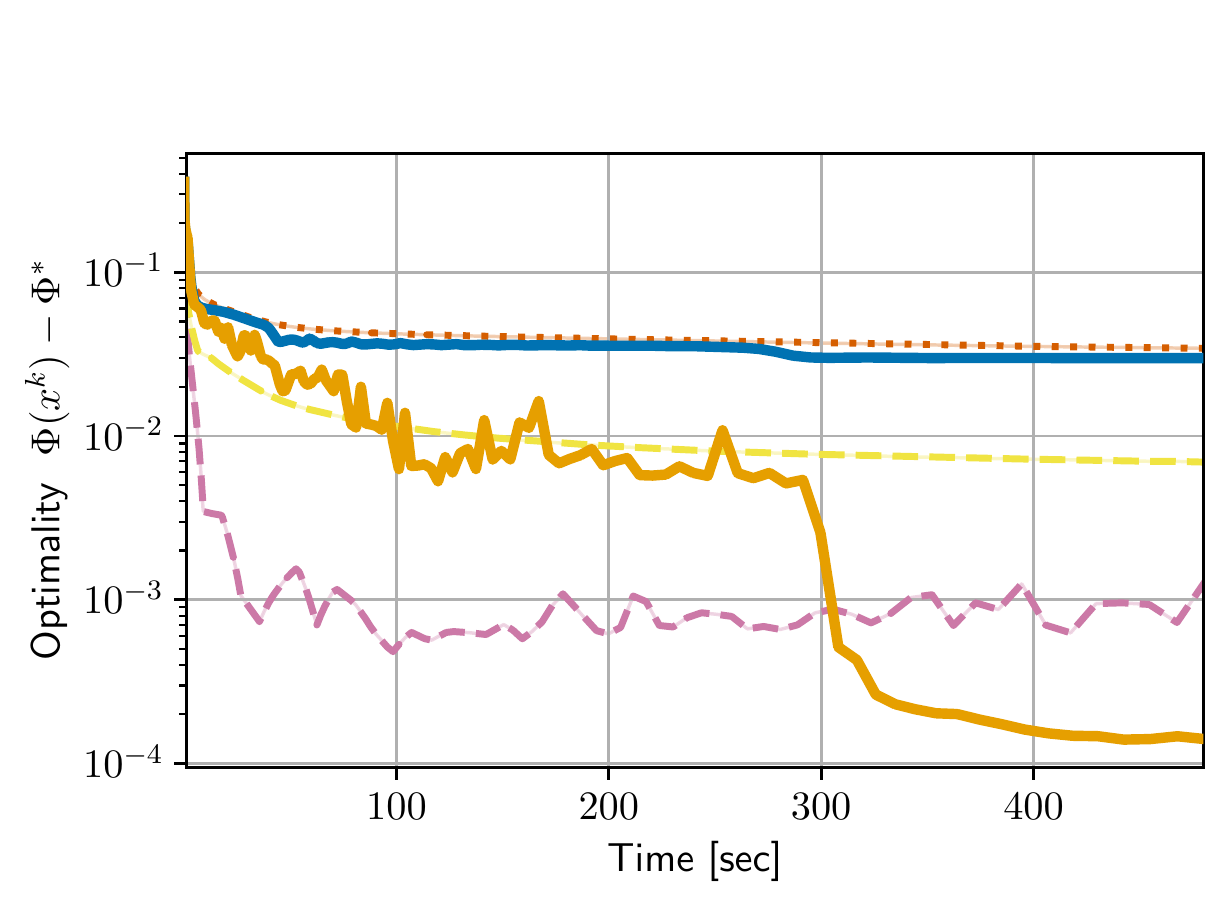}}
    \subfigure[Data Hyper-Clearning on MNIST]{\label{fig: dataclean_mnist}\includegraphics[width=0.44\textwidth, trim={0 0 0 1.4cm},clip]{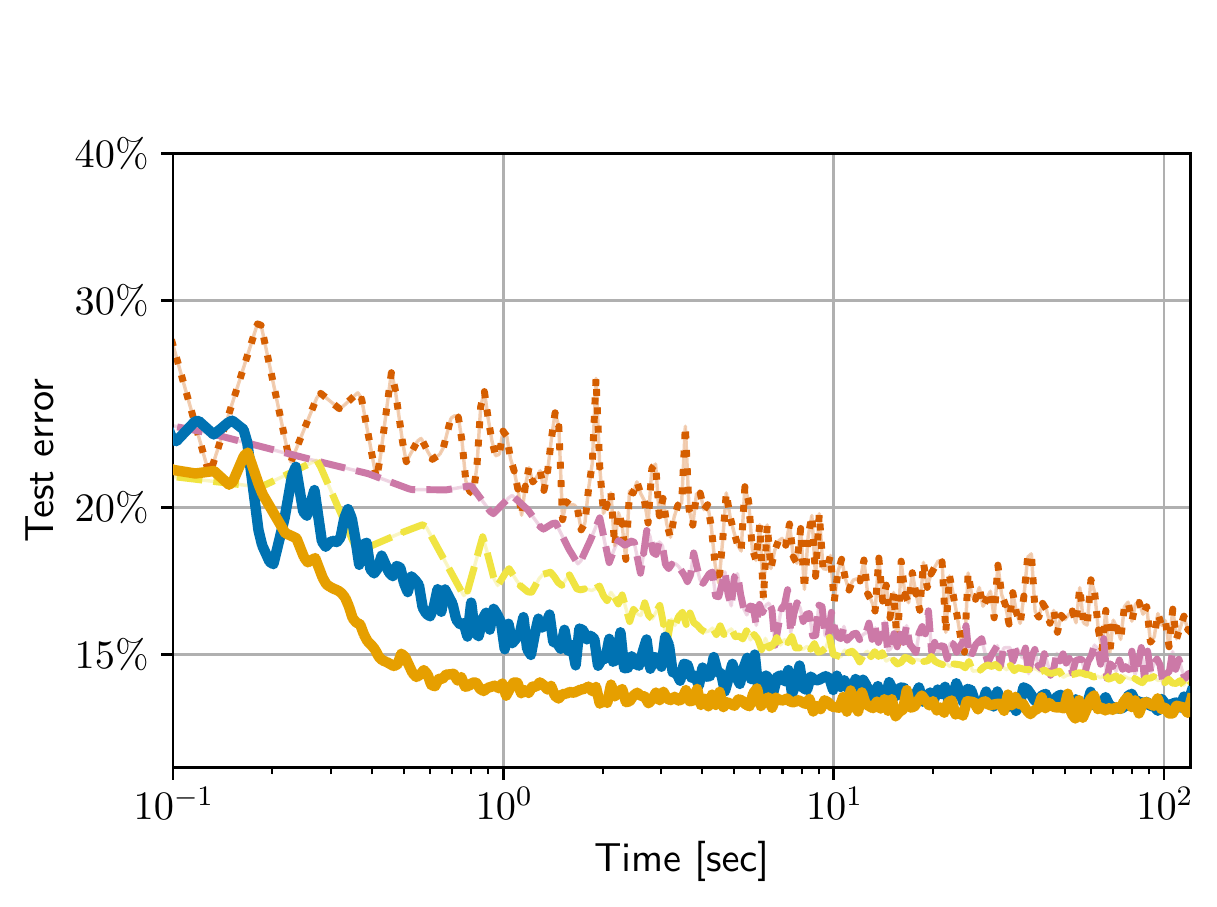}}
    \caption{Comparison of \texttt{MA-SOBA} with other stochastic bilevel optimization methods without using variance reduction techniques. For each algorithm, we plot the median performance over 10 runs. \textbf{Left:} Hyperparameter optimization for $\ell^2$ penalized logistic regression on IJCNN1 dataset. \textbf{Right.} Data hyper-cleaning on MNIST with $p=0.5$ (corruption rate).}
\end{figure}

In the first task, we fit binary classification models on the IJCNN1 dataset\footnote{\url{https://www.csie.ntu.edu.tw/~cjlin/libsvmtools/datasets/binary.html}}. The function $f$ and $g$ of the problem \eqref{eq: bo} are the average logistic loss on the validation set and training set respectively, with $\ell^2$ regularization for $g$. In Figure \ref{fig: logreg_ijcnn}, we plot the suboptimality gap against the runtime for each method. Surprisingly, we observed that \texttt{MA-SOBA} achieves lower objective values after several iterations compared to all benchmark methods. This improvement can be attributed to the convergence of average hypergradients $\{h^k\}$. These findings demonstrate the practical superiority of our algorithm framework, even with the same sample complexity results.

In the second task, we conduct data hyper-cleaning on the MNIST dataset introduced in \cite{franceschi2017forward}. Data cleaning aims to train a multinomial logistic regression model on the corrupted training set and determine a weight for each training sample. These weights should approach zero for samples with corrupted labels. We randomly replace the label by $\{0,1\dots, 9\}$ in the training set of MNIST with probability $p$. The task can be formulated into the bilevel optimization problem \eqref{eq: bo} with the inner variable $y$ being the regression coefficients and the outer variable $x$ being the sample weight. The \texttt{LL} function $g$ is the sample-weighted cross-entropy loss on the corrupted training set with $\ell^2$ regularization. The \texttt{UL} function $f$ is the cross-entropy loss on the validation set. We report the test error in Figure \ref{fig: dataclean_mnist}. We observe that \texttt{MA-SOBA} outperforms other benchmark methods by achieving lower test errors faster.

To demonstrate the practical performance of \texttt{MORMA-SOBA}, we conduct experiments in \emph{robust multi-task representation learning} introduced in \cite{gu2022min} on the FashionMNIST dataset~\cite{xiao2017online}. Each bilevel objective $\Phi_i$ in this setup represents a distinct learning ``task'' $i\in[n]$ with its own training and validation sets. The optimization variable is engaged in a shared representation network, parameterized by the outer variable $x$, along with per-task linear models parameterized by each inner variable $y_i$. The \texttt{UL} function $f_i$ is the average cross-entropy loss over the $i$-th validation set, and the \texttt{LL} function $g_i$ is the $\ell^2$ regularized cross-entropy loss over the $i$-th training set. The goal is to learn a shared representation and per-task models that generalize well on each task, which is usually solved by a single-objective problem that minimizes $(1/n)\sum_i \Phi_i$. In Figure \ref{fig: min-max}, we compare our algorithm with the existing min-max bilevel algorithm \texttt{MORBiT}~\cite{gu2022min} in terms of the average loss $((1/n)\sum_i \Phi_i)$ and maximum loss $(\max_i \Phi_i)$. The results demonstrate the superiority of \texttt{MORMA-SOBA} over \texttt{MORBiT} in terms of lowering both the max loss and average loss at a faster rate.

\begin{figure}[t]
  \begin{center}
    \includegraphics[width=0.6\textwidth]{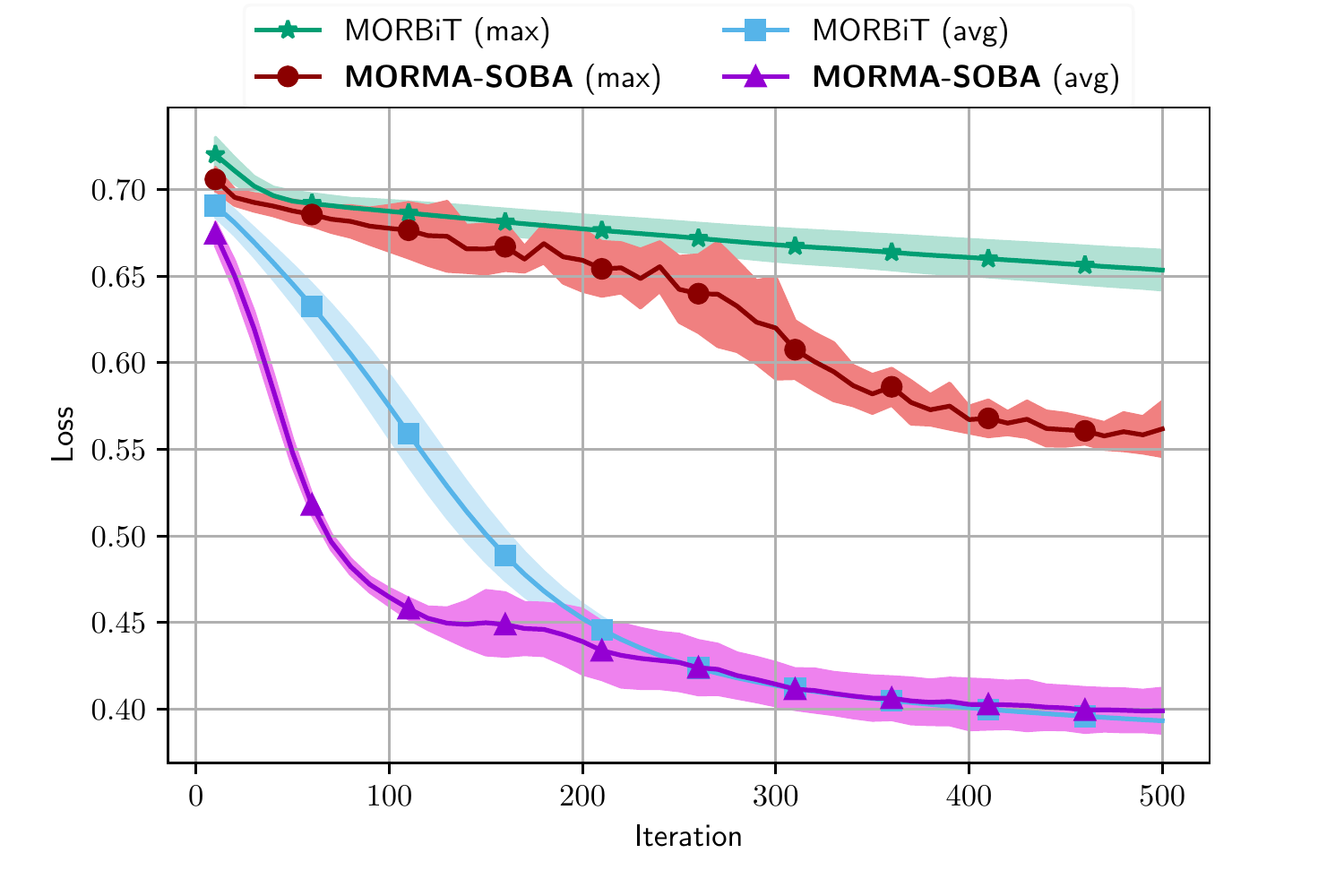}
  \end{center}
  \caption{\texttt{MORMA-SOBA} ($\mu_\lambda=0.01$) vs. \texttt{MORBiT} on robust multi-task representation learning}
  \label{fig: min-max}
\end{figure}

\section{Conclusion}

In this work, we propose a novel class of algorithms (\texttt{MA-SOBA}) for solving stochastic bilevel optimization problems in \eqref{eq: bo} by introducing the moving-average step to estimate the hypergradient. We present a refined convergence analysis of our algorithm, achieving the optimal sample complexity without relying on the high-order smoothness assumptions employed in the literature. Furthermore, we extend our algorithm framework to tackle a generic min-max bilevel optimization problem within the multi-objective setting, identifying and addressing the theoretical gap present in the literature.

\subsubsection*{Acknowledgements}
We thank the authors of~\cite{gu2022min} for clarifications regarding their paper.

\bibliographystyle{abbrv}
\bibliography{bibfile}

\appendix

\section{Experimental Details}\label{sec: exp}

All the experiments were conducted using Python. The initial two tasks, involving the comparison of \texttt{MA-SOBA} with other stochastic bilevel optimization algorithms, utilized the \texttt{Benchopt} package~\cite{moreau2022benchopt} and the open-sourced bilevel benchmark~\cite{dagreou2022framework}\footnote{\label{bench_bilevel}\url{https://github.com/benchopt/benchmark_bilevel}}. The final task, focused on robust multi-objective representation learning, was implemented in PyTorch, following the source code provided by \cite{gu2022min}\footnote{\url{https://github.com/minimario/MORBiT}}.

\subsection{Experimental Details for MA-SOBA}

\textbf{Setup.} In our experiments, we strictly adhere to the settings provided in \texttt{bench\_bilevel}\sref{bench_bilevel}, as detailed in Appendix B.1 of \cite{dagreou2022framework}. The previous results and setups of \cite{dagreou2022framework} have also been available in \url{https://benchopt.github.io/results/benchmark_bilevel.html}. For completeness, we provide a summary of the setup below.
\begin{itemize}[leftmargin=1.5em]
    \item To avoid redundant computations, we utilize oracles for the function $F_\xi, G_\phi$, which provide access to quantities such as $\nabla_1 F_\xi(x,y)$, $\nabla_2 F_\xi(x, y)$, $\nabla_2 G_\phi(x,y)$, $\nabla_{22}^{2} G_\phi(x,y)v$, and $\nabla_{12}^{2} G_\phi(x,y)v$, although this approach may violate the independence assumption in Assumption \ref{aspt: stochastic_grad}.
    \item In all our experiments, we employ a batch size of 64 for all methods, even for \texttt{BSA} and \texttt{AmIGO} that theoretically require increasing batch sizes.
    \item For methods involving an inner loop (\texttt{stocBiO}, \texttt{BSA}, \texttt{AmIGO}), we perform 10 inner steps per each outer iteration as proposed in those papers. 
    \item For methods that involve the Neumann approximation for the Hessian vector product (such as \texttt{BSA}, \texttt{TTSA}, \texttt{SUSTAIN}, and \texttt{MRBO}), we perform 10 steps of the subroutine per outer iteration. For \texttt{AmIGO}, we perform 10 steps of SGD to approximate the inversion of the linear system.
    \item The step sizes and momentum parameters used in all benchmark algorithms are directly adopted from the fine-tuned parameters provided by \cite{dagreou2022framework}. From a grid search, we select the best constant step sizes for \texttt{MO-SOBA}.
\end{itemize}

At present, we have excluded \texttt{SRBA}~\cite{dagreou2023lower} from the benchmark due to the unavailability of an open-sourced implementation and its limited reported improvement over \texttt{SABA}.

\subsubsection{Hyperparameter Optimization on IJCNN1}
In this experiment, we focus on selecting the regularization parameters for a multi-regularized logistic regression model on the IJCNN1 dataset, where we have one hyperparameter per feature. Specifically, the problem can be formulated as:

\begin{equation}
\begin{split}
    \underset{\nu\in\realset^d}{\min}&\quad  \Phi(\nu) \coloneqq  \underbrace{\underset{(X, Y) \sim \cD_{\text{val}}}{\E} \left[\ell\left(\<\omega^*(\nu), X> , Y\right)\right]}_{f(\nu, \omega^*(\nu))}\\
    \text{s.t.} &\quad \omega^*(\nu) =\underset{\omega\in\realset^d}{\argmin} \underbrace{\underset{(X, Y) \sim \cD_{\text{train}}}{\E} \left[\ell\left(\<\omega, X>, Y\right)\right] + \frac{1}{2} \omega^\top \diag{e^{\nu_1}, \dots, e^{\nu_d}} \omega}_{g(\nu, \omega)}
\end{split}    
\end{equation}

In this case, $|\cD_{\text{train}}| = 49,990$, $|\cD_{\text{val}}| = 91,701$, and $d=22$. For each sample, the covariate and label are denoted as $(X, Y)$, where $X\in\realset^{22}$ and $Y\in \{0,1\}$. The inner variable ($\omega\in\realset^{22}$) is the regression coefficient. The outer variable ($\nu\in\realset^{22}$) is a vector of regularization parameters. The loss function $\ell(y', y) = -y\log (y') - (1-y) \log(1-y')$ is the log loss.

To complement the comparison presented in the main paper, we conducted additional experiments that involved comparing all benchmark methods, including the variance reduction based method. In Figure \ref{fig: ijcnn_all_appendix}, we plot the suboptimality gap ($\Phi(x) - \Phi^*$) against runtime and the number of calls to oracles. Unfortunately, the previous results obtained for \texttt{MRBO} and \texttt{AmIGO} on the IJCNN1 dataset are not reproducible at the moment due to some conflicts in the current developer version of \texttt{Benchopt}. As reported in \cite{dagreou2022framework}, \texttt{MRBO} exhibits similar performance to \texttt{SUSTAIN}, while the curve of \texttt{AmIGO} initially follows a similar trend as \texttt{SUSTAIN} and eventually reaches a similar level as \texttt{SABA} towards the end. Following a grid search, we have selected the parameters in \texttt{MA-SOBA} as $\alpha_k\tau = 0.02$, $\beta_k=\gamma_k=0.01$, and $\theta_k = 0.1$. As shown in Figure \ref{fig: ijcnn_all_appendix}, our proposed method \texttt{MA-SOBA} outperforms \texttt{SOBA} significantly, achieving a slightly lower suboptimality gap compared to the state-of-the-art variance reduction-based method \texttt{SABA}.

\begin{figure}[h]
    \centering
    \subfigure{\includegraphics[width=0.6\textwidth,trim={0 8cm 0 0},clip]{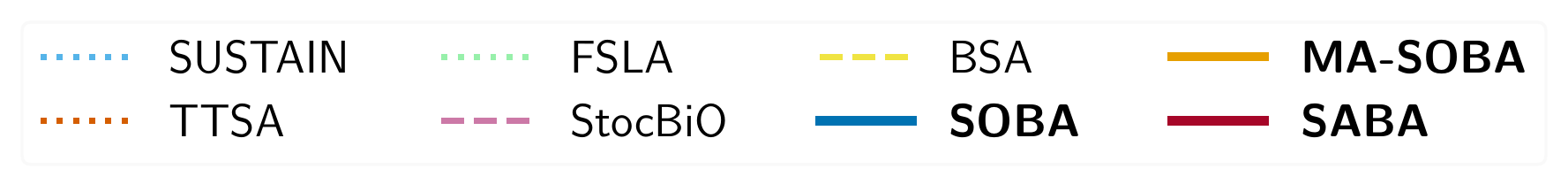}}\\
    \subfigure{\includegraphics[width=0.45\textwidth,trim={1.5cm 0 3.8cm 2cm},clip]{figures/ijcnn1_all.pdf}}
    \subfigure{\includegraphics[width=0.45\textwidth,trim={1.5cm 0 3.8cm 2cm},clip]{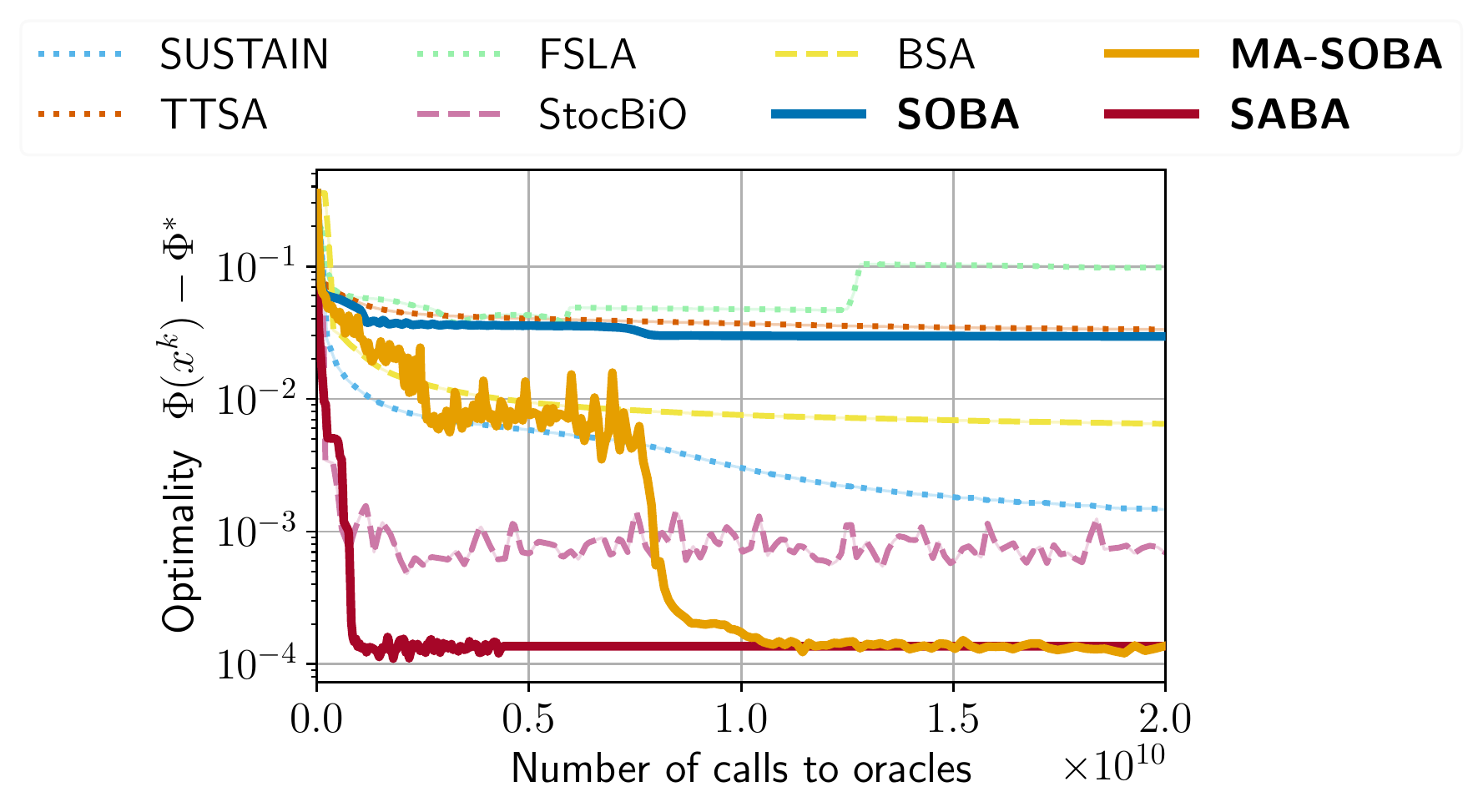}}
    \caption{Comparison of \texttt{MA-SOBA} with other stochastic bilevel optimization methods in the problem of hyperparameter optimization for $\ell^2$ regularized logistic regression on the IJCNN1 dataset. We plot the median performance over 10 runs for each method. \textbf{Left:} Performance in runtime; \textbf{Right:} Performance in the number of gradient/Hessian(Jacobian)-vector products sampled.}
    \label{fig: ijcnn_all_appendix}
\end{figure}

\subsubsection{Data Hyper-Cleaning on MNIST}

The second experiment we perform involves data hyper-cleaning on the MNIST dataset. The dataset is partitioned into a training set $\cD_{\text{train}}$, a validation set $\cD_{\text{val}}$, and a test set $\cD_{\text{test}}$, where $|\cD_{\text{train}}|=20,000$, $|\cD_{\text{val}}|=5,000$, and $|\cD_{\text{test}}|=10,000$. Each sample is represented as a vector $X$ of dimension 784, where the input image is flattened. The corresponding label takes values from the set $\{0,1,\dots,9\}$. We use $Y\in\realset^{10}$ to denote its one-hot encoding. Each sample in the training set is corrupted with probability $p$ by replacing its label with a random label $\{0,1,\dots,9\}$. The task of data hyper-cleaning can be formulated into the bilevel optimization problem as below:
\begin{equation}
\begin{split}
    \underset{\nu\in\realset^{|\cD_{\text{train}}|}}{\min}&\quad  \Phi(\nu) \coloneqq \underbrace{\underset{(X, Y) \sim \cD_{\text{val}}}{\E} \left[\ell(W^*(\nu) X, Y)\right]}_{f(\nu, W^*(\nu))}\\
    \text{s.t.} &\quad W^*(\nu) =\underset{\omega\in\realset^d}{\argmin} \underbrace{ \frac{1}{|\cD_{\text{train}}|} \sum_{(X_i, Y_i)\sim \cD_{\text{train}}} \sigma(\nu_i)\ell(WX_i, \overbrace{\widetilde{Y_i}}^{\text{corrupted}}) + C_r\norm{W}^2 }_{g(\nu, W)},
\end{split}    
\end{equation}
where the outer variable ($\nu\in\realset^{20,000}$) is a vector of sample weights for the training set, the inner variable $W\in\realset^{10\times 784}$, and $\ell$ is the cross entropy loss and $\sigma$ is the sigmoid function. The regularization parameter $C_r=0.2$ following \cite{dagreou2022framework}. The objective of data hyper-cleaning is to train a multinomial logistic regression model on the training set and determine a weight for each training sample using the validation set. The weights are designed to approach zero for corrupted samples, thereby aiding in the removal of these samples during the training process.

To supplement the comparison presented in the main paper, we conducted additional experiments that involved comparing all benchmark methods, including the variance reduction-based method. Following a grid search, we have selected the parameters in \texttt{MA-SOBA} as $\alpha_k\tau = 10^3$, $\beta_k = \gamma_k = 10^{-2}$, and $\theta_k=10^{-1}$. In Figure \ref{fig: dataclean-mnist-appendix}, we plot the test error against runtime and the number of calls to oracles with different corruption probability $p\in \{0.5, 0.7, 0.9\}$. We observe that \texttt{MA-SOBA} has comparable performance to the state-of-the-art method \texttt{SABA}. Remarkably, \texttt{MA-SOBA} is the fastest algorithm to reach the best test accuracy when $p=0.5$.

\begin{figure}[h]
    \centering
    \subfigure{\includegraphics[width=0.8\textwidth,trim={0 8.2cm 0 0},clip]{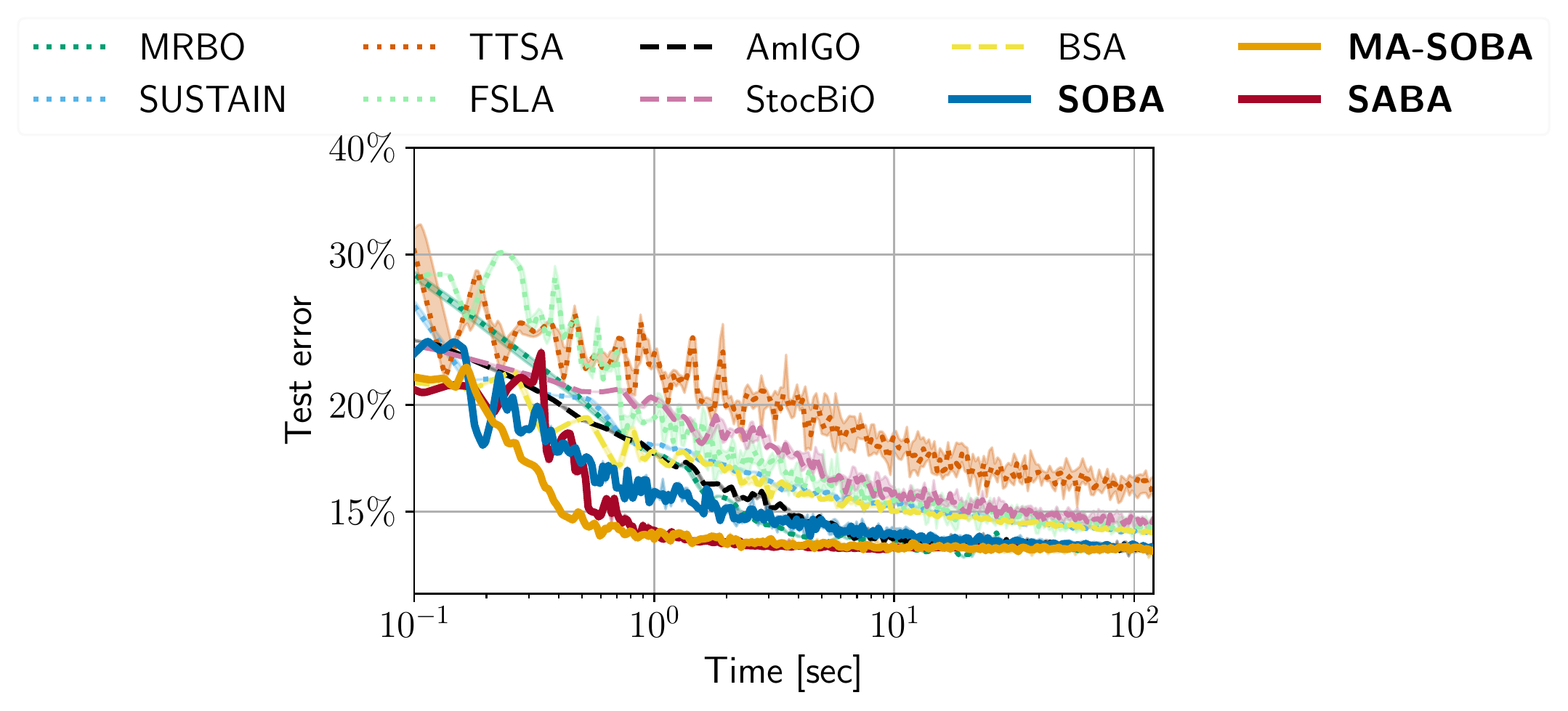}}\\
    \subfigure{\includegraphics[width=0.31\textwidth,trim={3.8cm 0 5.5cm 1.8cm},clip]{figures/datacleaning0_5_all.pdf}}
    \subfigure{\includegraphics[width=0.31\textwidth,trim={3.8cm 0 5.5cm 1.8cm},clip]{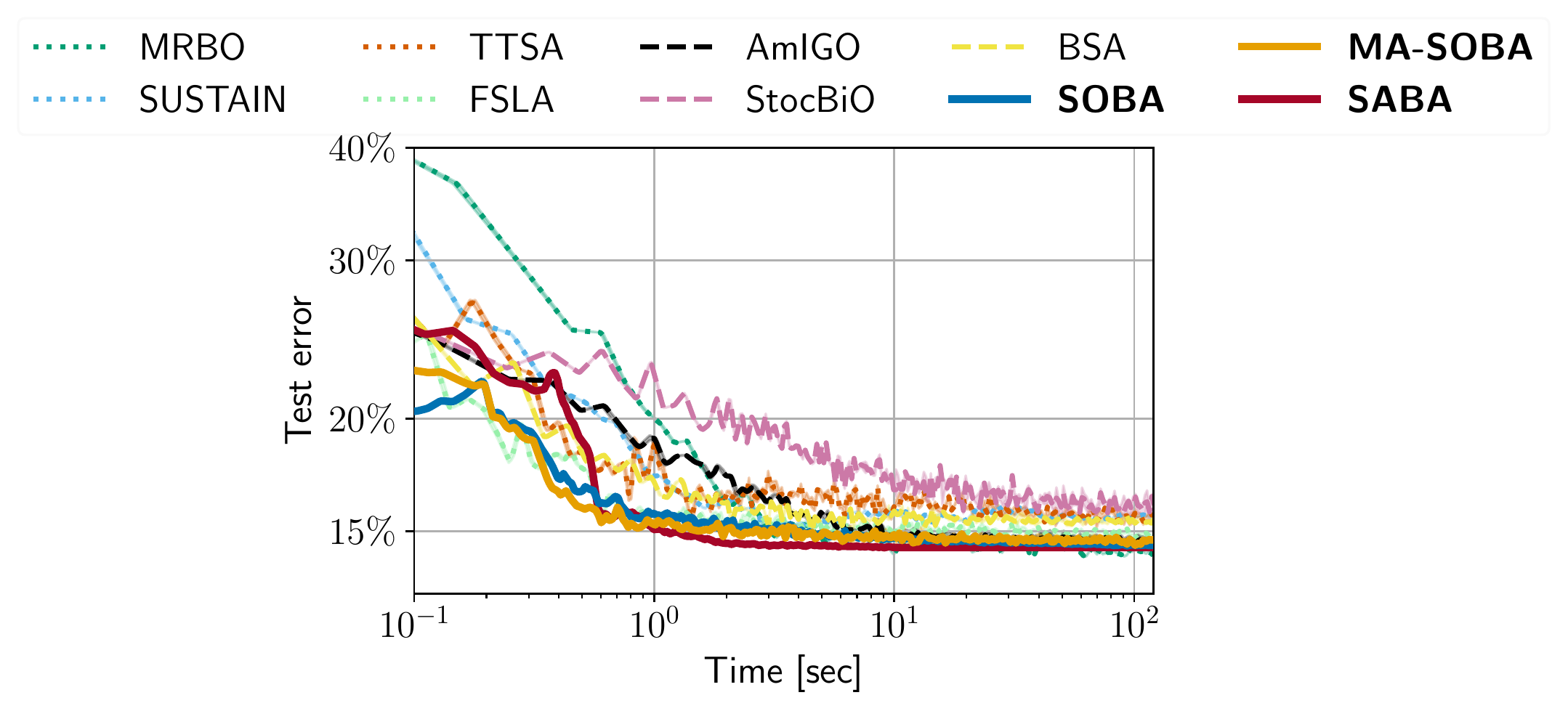}}
    \subfigure{\includegraphics[width=0.31\textwidth,trim={3.8cm 0 5.5cm 1.8cm},clip]{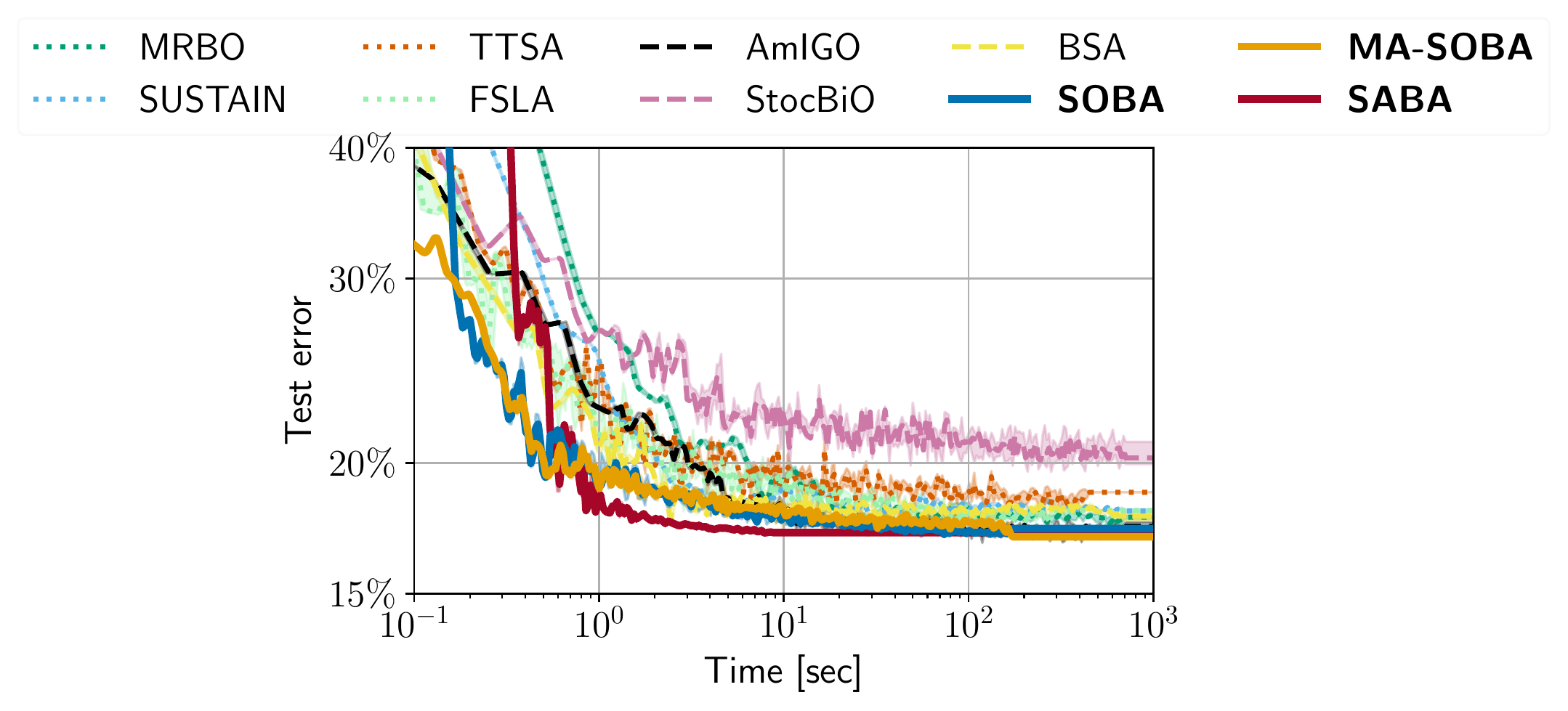}}\\
    \addtocounter{subfigure}{-4}
    \subfigure[$p=0.5$]{\includegraphics[width=0.31\textwidth,trim={3.8cm 0 5.5cm 1.8cm},clip]{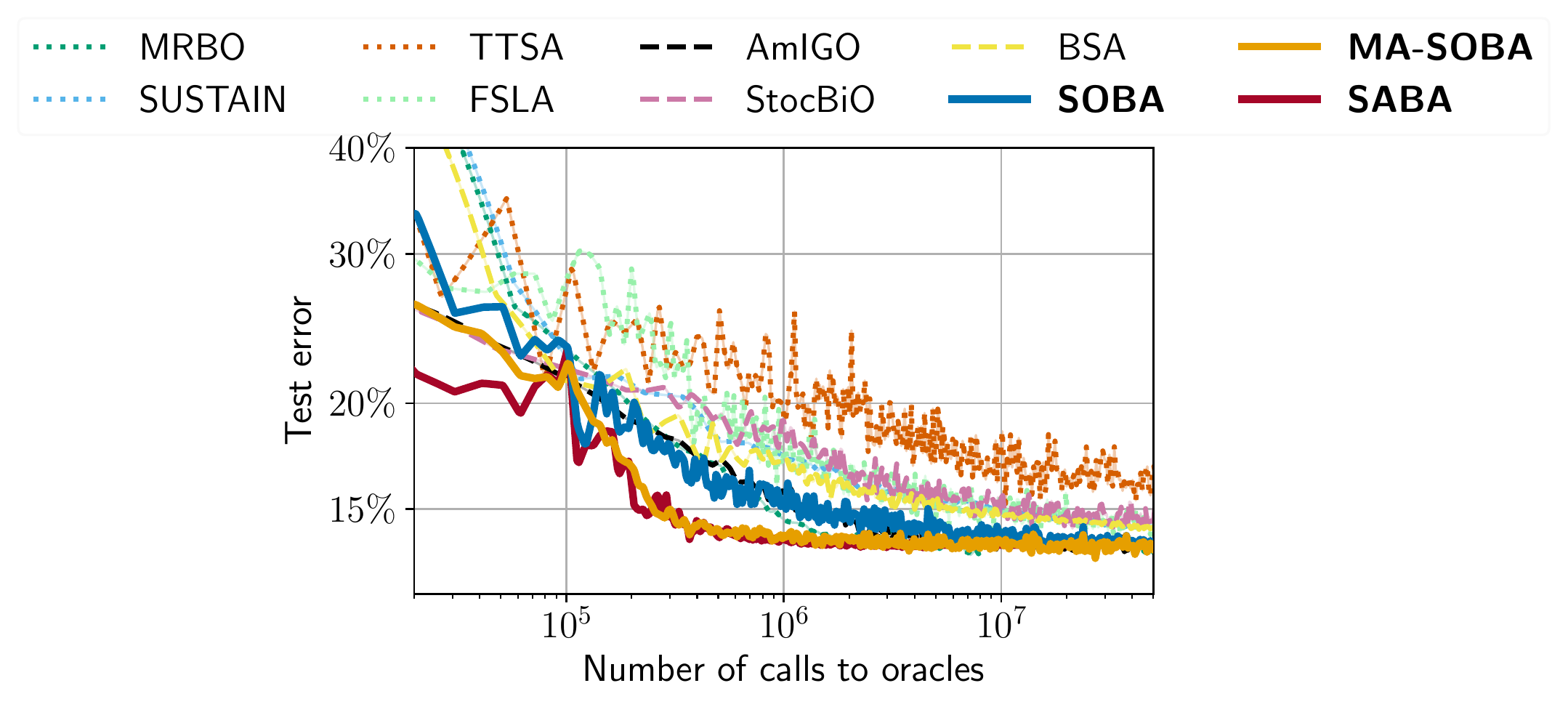}}
    \subfigure[$p=0.7$]{\includegraphics[width=0.31\textwidth,trim={3.8cm 0 5.5cm 1.8cm},clip]{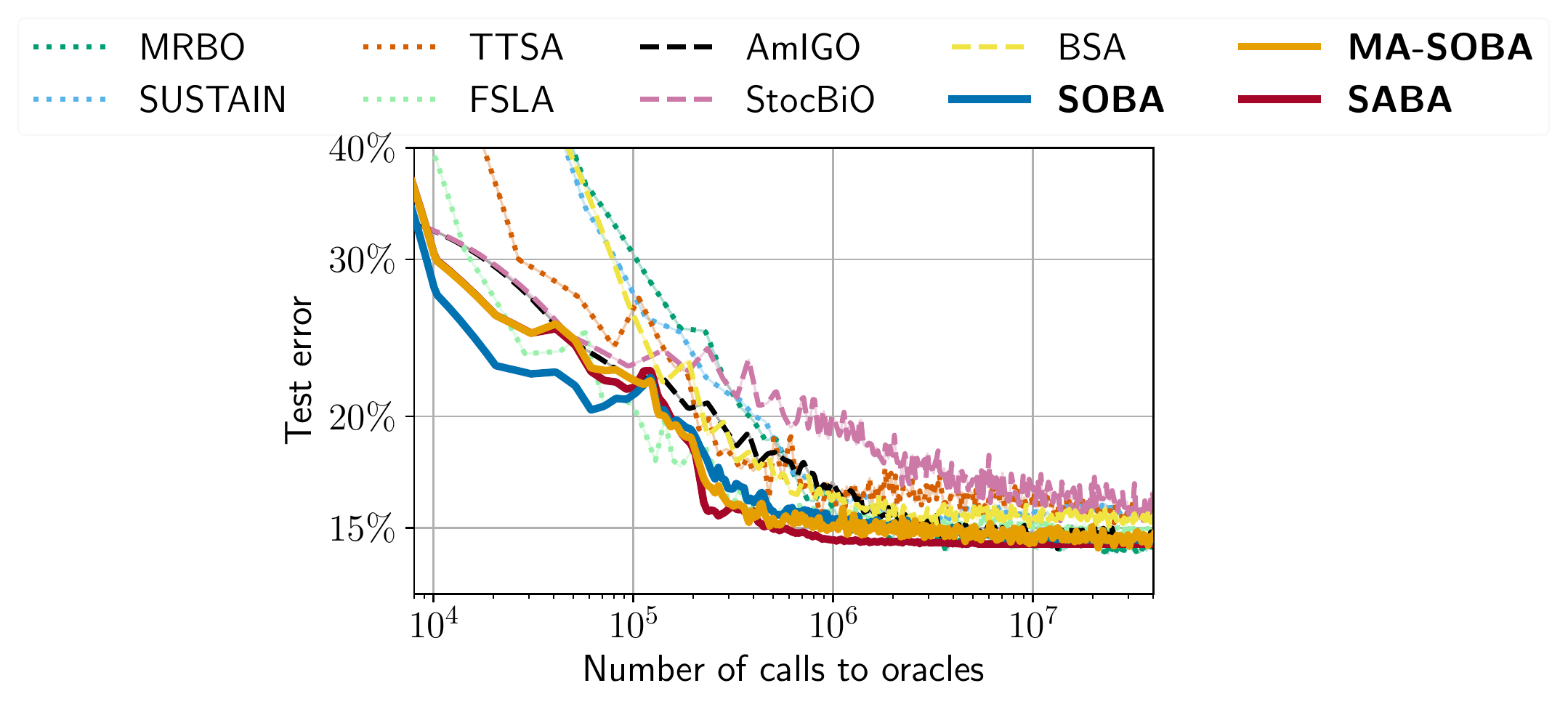}}
    \subfigure[$p=0.9$]{\includegraphics[width=0.31\textwidth,trim={3.8cm 0 5.5cm 1.8cm},clip]{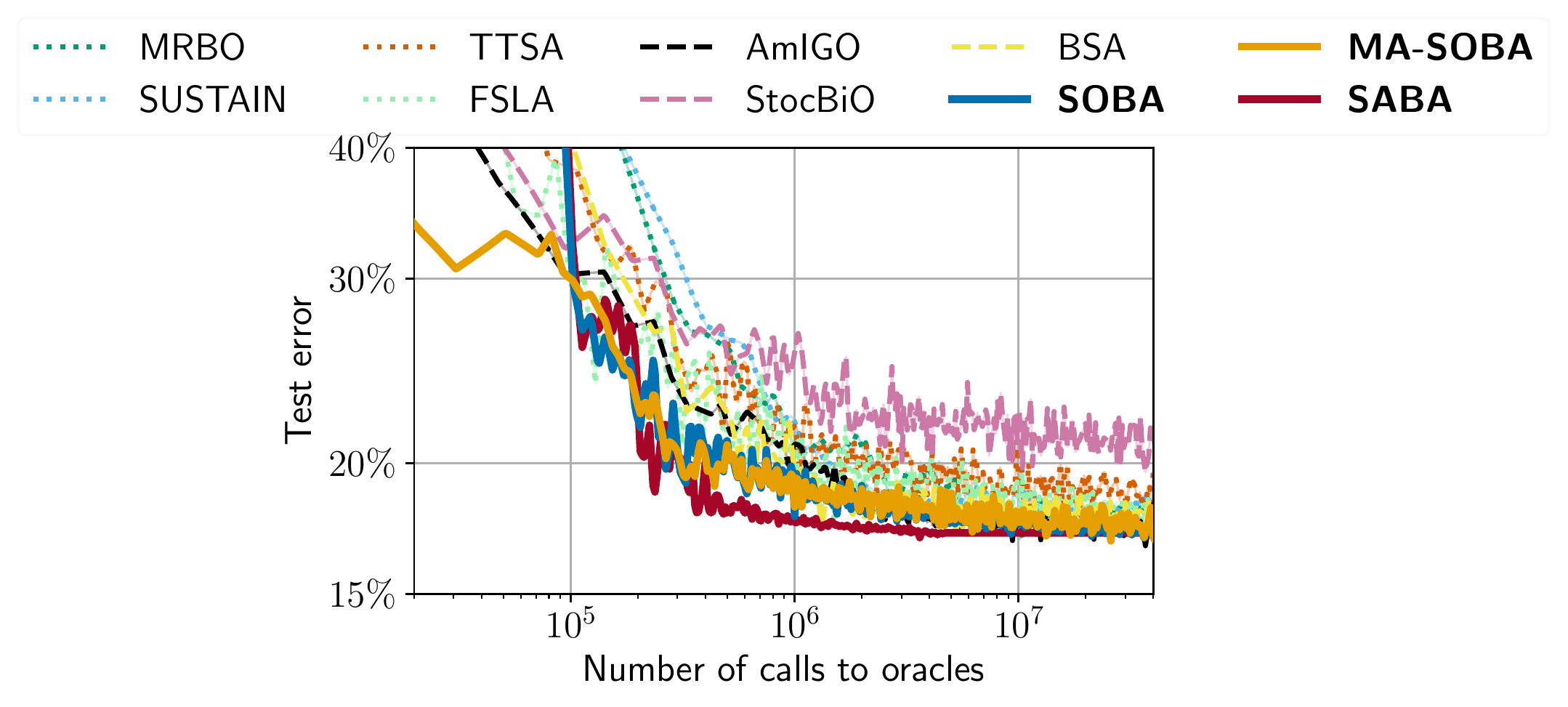}}
    \caption{Comparison of \texttt{MA-SOBA} with other stochastic bilevel optimization methods in the problem of data hyper-cleaning on the MNIST dataset when the corruption probability $p\in\{0.5, 0.7, 0.9\}$. We plot the median performance over 10 runs for each method. \textbf{Top:} Performance in runtime; \textbf{Bottom:} Performance in the number of gradient/Hessian(Jacobian)-vector products sampled.}
    \label{fig: dataclean-mnist-appendix}
\end{figure}

\subsubsection{Moving Average vs. Variance Reduction}

Through empirical studies, we have demonstrated that our proposed method, \texttt{MA-SOBA}, which utilizes a moving average (MA) technique, achieves comparable performance to the state-of-the-art variance reduction-based approach \texttt{SABA} using \texttt{SAGA} updates~\cite{defazio2014saga}. In this context, we would like to highlight the key difference and relationship between these two methods.

We start with presenting the update rules of the sequence of estimated gradients $\{g^k\}$ for the variance reduction techniques \texttt{SAGA}~\cite{defazio2014saga} and our moving average method (\texttt{MA}) for the single-level problem.
\begin{thmbox}
    \textbf{SAGA (finite-sum:) $\min~\frac{1}{n}\sum_{i=1}^{n} f_i(x)$}
    \begin{align*}
        g^k = \nabla f_{i_k}(x^k) - \nabla f_{i_k}(\bar{x}_{i_k}) + \frac{1}{n}\sum_{j=1}^{n}\nabla f_j(\bar{x}_j)
    \end{align*}
\end{thmbox}

The \texttt{SAGA} update is designed for finite-sum problems with offline batch data. At each iteration $k$, the algorithm randomly selects an index $i_k \in [n]$ and updates the gradient variable $g^k$ using a reference point $\bar{x}_{i_k}$, which corresponds to the last evaluated point for $\nabla f_{i_k}$. However, it should be noted that \texttt{SAGA} requires storing the previously evaluated gradients ${\nabla f_j(\bar{x}_j)}$ in a table, which can be memory-intensive when sample size $n$ or dimension $d$ is large. In the finite-sum setting, there exist several other variance reduction methods, such as \texttt{SARAH}~\cite{dagreou2023lower}, that can be employed to further enhance the dependence on the number of samples, $n$, for bilevel optimization problems. However, the \texttt{SARAH}-type method requires double gradient evaluations on each iteration of $x^k$ and $x^{k-1}$.


\begin{thmbox}
    \textbf{MA (expectation):} $\min~\E_{\xi}[f(x;\xi)]$
    \begin{align*}
        g^k = (1-\alpha_k) g^{k-1} + \alpha_k \nabla f(x^k; \xi^{k+1})
    \end{align*}
\end{thmbox}

Unlike variance reduction techniques, the moving average methods can solve the general expectation-form problem with online and streaming data using a simple update per iteration. In addition, the moving average techniques offer two advantages compared to variance reduction-based methods:

\textbf{Theoretical Assumption.} All variance reduction methods, including \texttt{SVRG}~\cite{reddi2016stochastic}, \texttt{SAGA}~\cite{defazio2014saga}, \texttt{SARAH}~\cite{nguyen2017sarah}, \texttt{STORM}~\cite{cutkosky2019momentum}, and others, typically rely on assuming mean-squared smoothness assumptions. In particular, for stochastic optimization problems in the form of $\min_x \{f(x) = \mathbb{E}[F(x,\xi)]\}$, the definition of mean-squared smoothness (MSS) is:
$$(\text{MSS}) \quad \mathbb{E}_{\xi}[\|\nabla F(x, \xi) - \nabla F(x', \xi)\|^2] \leq L^2 \| x-x' \|^2.$$
However, MSS is a stronger assumption than the general smoothness assumption on $f$:
$$\|\nabla f(x) - \nabla f(x')\| \leq L\| x- x' \|.$$
By Jensen’s inequality, we have that MSS is stronger than the general smoothness assumption on $f$:
$$\|\nabla f(x) - \nabla f(x')\|^2 \leq \mathbb{E}_{\xi}[\|\nabla F(x, \xi) - \nabla F(x', \xi)\|^2].$$
In this work, the theoretical results of the proposed methods are only built on the smoothness assumption on the \texttt{UL} and \texttt{LL} functions $f, g$ without further assuming MSS on $F_\xi$ and $G_\phi$. It is worth noting that a clear distinction in the lower bounds of sample complexity for solving the single-level stochastic optimization has been proven in \cite{arjevani2023lower}. Specifically, they establish a separation under the MSS assumption on $F_\xi$ and smoothness assumptions on $f$ ($\cO(\epsilon^{-1.5})$ vs. $\cO(\epsilon^{-2})$). 
Thus, it is important to emphasize that \texttt{MA-SOBA} achieves the optimal sample complexity $\cO(\epsilon^{-2})$ under our weaker assumptions.

\textbf{Practical Implementation.} Variance reduction methods often entail additional space complexity, require double-loop implementation or double oracle computations per iteration. These requirements can be unfavorable for large-scale problems with limited computing resources. For instance, in the second task, the runtime improvement achieved by using \texttt{SABA} is limited. This limitation can be attributed to the dimensionality of the variables $\nu$ (with a dimension of $20,000$) and $W$ (with a dimension of $10 \times 784$). The benefit of using variance reduction methods is expected to be less significant for more complex problems involving computationally expensive oracle evaluations.

\subsection{Experimental Details for MORMA-SOBA}

We adopt the same setup as described in \cite{gu2022min}, which can be summarized as follows.

\textbf{Setup.} We consider binary classification tasks generated from the FashionMNIST data set where we select 8 ``easy'' tasks (lowest loss $\sim 0.3$ from independent training) and 2 ``hard'' tasks (lowest loss  $\sim 0.45$ from independent training) for multi-objective robust representation learning:
\begin{itemize}
    \item ``easy'' tasks: (0, 9), (1, 7), (2, 7), (2, 9), (4, 7), (4, 9), (3, 7), (3, 9)
    \item ``hard'' tasks: (0, 6), (2, 4)
\end{itemize}
For each task $i\in[10]$ above, we partition its dataset into the training set $\cD_i^{\text{train}}$, validation set $\cD_i^{\text{val}}$, and test set $\cD_{i}^{\text{test}}$. We also generate 7 (unseen) binary classification tasks for testing:
\begin{itemize}
    \item ``easy'' tasks: (1, 9), (2, 5), (4, 5), (5, 6)
    \item ``hard'' tasks: (2, 6), (3, 6), (4, 6)
\end{itemize}
We train a shared representation network that maps the 784-dimensional (vectorized 28x28 images) input to a 100-dimensional space. Subsequently, each task learns a binary classifier based on this shared representation. To learn a shared representation and per-task models that generalize well on each task, we aim to solve the following min-max bilevel optimization problem:
\begin{equation*}
\begin{split}
        &\quad \underset{E\in \realset^{100\times 784}}{\min}\ \max_{1\leq i \leq n } \ \Phi_i(E) \coloneqq \underbrace{\underset{(X, Y)\sim \cD_i^{\text{val}}}{\E}\left[\ell\left( W_i^*(E) \circ\overbrace{\text{ReLU}(EX)}^{\text{representation}} + b_i^*(E), Y\right)\right]}_{f_i(E, (W_i^*, b_i^*))} \\
        &\text{s.t. }\ \begin{pmatrix}W_i^*(E)\\ b_i^*(E)\end{pmatrix} = \\
        &\underset{W_i\in\realset^{10\times 100},b_i\in\realset^{10}}{\argmin} \underbrace{\underset{(X, Y)\sim \cD_i^{\text{train}}}{\E}\left[\ell\left(\overbrace{W_i}^{\text{weight}} \circ \text{ReLU}(EX) + \overbrace{b_i}^{\text{bias}}, Y\right)\right]+ \rho\norm{W_i}_F^2}_{g_i(E, (W_i, b_i))} , 1\leq i\leq n.
\end{split}
\end{equation*}
Each sample is represented as a vector $X$ of dimension 784, where the input image is flattened. The corresponding label takes values from the set $\{0,1,\dots,9\}$. We use $Y\in\realset^{10}$ to denote its one-hot encoding. Each bilevel objective $\Phi_i$ above represents a distinct binary classification task $i\in[n]$. The optimization variable is engaged in a shared representation network, parameterized by the outer variable $E\in\realset^{100\times 784}$, along with per-task linear models parameterized by each inner variable $(W_i, b_i)$. The \texttt{UL} function $f_i$ is the average cross-entropy loss over the $\cD_{i}^{\text{val}}$, and the \texttt{LL} function $g_i$ is the $\ell^2$ regularized cross-entropy loss over $\cD_i^{\text{train}}$. 

In the experiment, the regularization parameter in the \texttt{LL} function $\rho=5\times 10^{-4}$. The implementation of \texttt{MORBiT} follows the same manner described in \cite{gu2022min}. Specifically, the code of \texttt{MORBiT} \cite{gu2022min} uses vanilla SGD with a learning rate scheduler and incorporates momentum and weight decay techniques to optimize each variable:
\begin{itemize}
    \item Outer variable: learning rate = $0.01$, momentum = $0.9$, weight\_decay = $10^{-4}$
    \item Inner variable: learning rate = $0.01$, momentum = $0.9$, weight\_decay = $10^{-4}$
    \item Simplex variable: learning rate = $0.3$, momentum = $0.9$, weight\_decay = $10^{-4}$
\end{itemize}
In addition, \texttt{MORBiT} adopts a straightforward iterative auto-differentiation to calculate the hypergradient without using the Neumann approximation of the Hession inversion.

For the implementation of \texttt{MORMA-SOBA}, the regularization parameter $\mu_{\lambda}$ in \ref{eq: minmaxbo_reform_reg} is set to be $0.01$. All remaining parameters are chosen as constant values, as listed below:
\begin{itemize}
    \item Outer variable: $\tau_x= 1, \alpha_k = 0.02,$
    \item Inner variable: $\beta_k = 0.02$
    \item Auxiliary variable: $\gamma_k=0.02$
    \item Simplex variable: $\tau_\lambda = 1, \alpha_k = 0.02$
    \item Average gradient: $\theta_k=0.6$
\end{itemize}

Both evaluated methods use batch sizes of 8 and 128 to compute $g_i$ for each inner step and $f_i$ for each outer iteration, respectively. In addition to Figure \ref{fig: min-max}, which showcases the performance on 10 seen tasks used for representation learning, we present Figure \ref{fig: minmax_all_appendix}. This figure displays the maximum/average loss values against the number of iterations on test sets consisting of 10 seen tasks and 7 unseen tasks. Our proposed approach, \texttt{MORMA-SOBA}, demonstrates superior performance in terms of faster reduction of both the maximum and average loss.
\begin{figure}[h]
    \centering
    \subfigure{\includegraphics[width=0.8\textwidth,trim={0 9.2cm 0 0},clip]{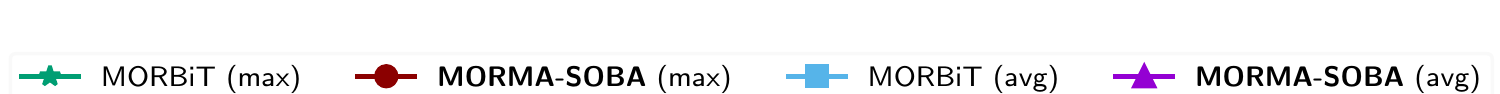}}\\
    \subfigure{\includegraphics[width=0.49\textwidth,trim={0.4cm 0.5cm 0.8cm 1.2cm},clip]{figures/minmax_seen_loss_appendix.pdf}}
    \subfigure{\includegraphics[width=0.49\textwidth,trim={0.4cm 0.5cm 0.8cm 1.2cm},clip]{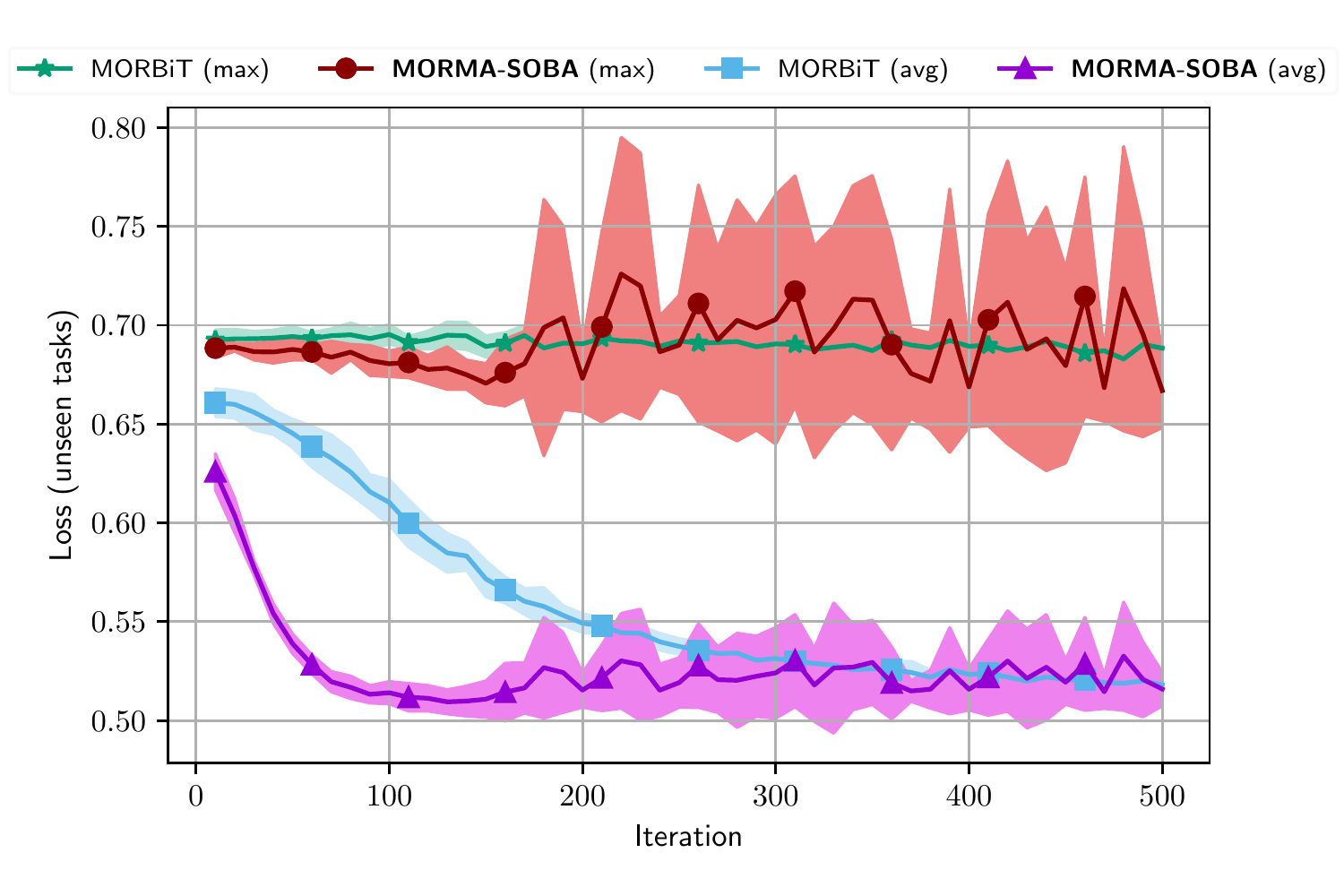}}
    \caption{Comparison of \texttt{MORMA-SOBA} with \texttt{MORBiT} in the problem of multi-objective robust representation learning for binary classification tasks on the FashionMNIST dataset. We aggregate the results over 10 runs for each method. \textbf{Left:} Performance on test sets of seen tasks; \textbf{Right:} Performance on unseen tasks.}
    \label{fig: minmax_all_appendix}
\end{figure}

\section{Proofs}
We will prove Theorems \ref{thm: soba_convergence} and \ref{thm: momasoba_convergence} in Section \ref{sec: thm1proof} and \ref{sec: thm2proof} respectively. In each section we will first establish the relations between the optimality measure (see $V_k, \tilde V_k$ in Sections \ref{sec: bo_analysis} and \ref{sec: convergence_momasoba}) and the gradient mapping, which reduce the proof of main theorems to proving the convergence of primal variables ($x^k$ in Theorem \ref{thm: soba_convergence} or $(x^k, \lambda^k)$ in Theorem \ref{thm: momasoba_convergence}) and dual variables ($h^k$ in Theorem \ref{thm: soba_convergence} or $(h_x^k, h_{\lambda}^k)$ in Theorem \ref{thm: momasoba_convergence}). Then we will prove the hypergradient estimation error, primal convergence and dual convergence separately. In our notation convention, the superscript $k$ usually denotes the iteration number and the subscript $i$ represents variables related to functions $f_i, g_i$. $L_{\#}$ with being a function $\#$ denotes its Lipschitz constant.

We first specify the constants in Assumption \ref{aspt: stochastic_grad}.\\
\textbf{Assumption \ref{aspt: stochastic_grad}.}
    For any $k\geq 0$, define $\setF_k$ denotes the sigma algebra generated by all iterates with superscripts not greater than $k$, i.e., $\setF_k = \sigma\left\{h^1, h^2, ..., h^k, x^1,..., x^k, y^1, ..., y^k, z^1,...,z^k\right\}$.
    The stochastic oracles used in Algorithm \ref{algo: momasoba} at $k$-th iteration are unbiased with bounded variance given $\setF_k$, i.e., there exist positive constants $\sigma_{f,1}, \sigma_{f,2}, \sigma_{g,1}, \sigma_{g,2}$ such that
    \begin{align*}
    &\E\left[u_x^{k+1}\middle|\setF_k\right] = \nabla_1 f(x^k, y^k),\  \E\left[\norm{u_x^{k+1} - \nabla_1 f(x^k, y^k)}^2\middle|\setF_k\right]\leq \sigma_{f,1}^2,\\
    &\E\left[u_y^{k+1} \middle|\setF_k\right] = \nabla_2 f(x^k, y^k),\  \E\left[\norm{u_y^{k+1} - \nabla_2 f(x^k, y^k)}^2 \middle|\setF_k\right]\leq \sigma_{f,2}^2,\\
    &\E\left[v^{k+1}\middle|\setF_k\right] = \nabla_2 g(x^k, y^k),\ \E\left[\norm{v^{k+1} - \nabla_2 g(x^k, y^k)}^2\middle|\setF_k \right]\leq \sigma_{g,1}^2, \\
    &\E\left[H^{k+1}\middle|\setF_k\right] = \nabla_{22}^2 g(x^k,y^k),\ \E\left[\norm{H^{k+1} - \nabla_{22}^2g(x^k,y^k)}^2\middle|\setF_k\right]\leq \sigma_{g,2}^2,\\
    &\E\left[J^{k+1}\middle|\setF_k\right] = \nabla_{12}^2g(x^k, y^k),\  \E\left[\norm{J^{k+1} - \nabla_{12}^2g(x^k, y^k)}^2\middle|\setF_k\right] \leq \sigma_{g,2}^2.
\end{align*}
In addition, they are conditionally independent conditioned on $\setF_k$.\\

Next we state some technical lemmas that will be used in both sections.
\begin{lemma}\label{lem: gd_decrease}
    Suppose $f(x)$ is $\mu$-strongly convex and $L$-smooth. For any $x$ and $\gamma<\frac{2}{\mu + L}$, define $x^+ = x - \gamma\nabla f(x),\ x^*=\argmin f(x)$. Then we have
    \[
        \|x^+ - x^*\| \leq (1-\gamma\mu)\|x-x^*\|
    \]
\end{lemma}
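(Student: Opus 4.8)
\textbf{Proof plan for Lemma~\ref{lem: gd_decrease}.}
The plan is to expand $\|x^+ - x^*\|^2$ using the update rule and the fact that $\nabla f(x^*) = 0$, and then invoke the standard co-coercivity-type inequality for strongly convex and smooth functions. First I would write
\[
    \|x^+ - x^*\|^2 = \|x - x^* - \gamma \nabla f(x)\|^2 = \|x - x^*\|^2 - 2\gamma\<\nabla f(x), x - x^*> + \gamma^2\|\nabla f(x)\|^2,
\]
using $\nabla f(x^*) = 0$ to replace $\nabla f(x)$ by $\nabla f(x) - \nabla f(x^*)$ wherever convenient. The key ingredient is the inequality that holds for any $\mu$-strongly convex and $L$-smooth $f$ (see, e.g., Theorem~2.1.12 in~\cite{nesterov2018lectures}):
\[
    \<\nabla f(x) - \nabla f(x^*), x - x^*> \geq \frac{\mu L}{\mu + L}\|x - x^*\|^2 + \frac{1}{\mu + L}\|\nabla f(x) - \nabla f(x^*)\|^2.
\]

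Substituting this bound into the cross term and regrouping, I would obtain
\[
    \|x^+ - x^*\|^2 \leq \left(1 - \frac{2\gamma\mu L}{\mu + L}\right)\|x - x^*\|^2 + \gamma\left(\gamma - \frac{2}{\mu + L}\right)\|\nabla f(x)\|^2.
\]
Since $\gamma < \frac{2}{\mu + L}$, the coefficient $\gamma(\gamma - \frac{2}{\mu+L})$ is negative, so the last term can be dropped, leaving
\[
    \|x^+ - x^*\|^2 \leq \left(1 - \frac{2\gamma\mu L}{\mu + L}\right)\|x - x^*\|^2.
\]
It then remains to check $1 - \frac{2\gamma\mu L}{\mu + L} \leq (1 - \gamma\mu)^2$, i.e. $(1-\gamma\mu)^2 - 1 + \frac{2\gamma\mu L}{\mu+L} \geq 0$, which simplifies to $\gamma^2\mu^2 - 2\gamma\mu + \frac{2\gamma\mu L}{\mu+L} = \gamma\mu\left(\gamma\mu - 2 + \frac{2L}{\mu+L}\right) = \gamma\mu\left(\gamma\mu - \frac{2\mu}{\mu+L}\right) = \gamma\mu^2\left(\gamma - \frac{2}{\mu+L}\right) \leq 0$; wait, this is $\leq 0$, not $\geq 0$. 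Let me instead use the weaker but simpler bound: since $\frac{\mu L}{\mu+L} \geq \frac{\mu}{2}$ is false in general, I would more carefully note that $1 - \frac{2\gamma\mu L}{\mu+L} \leq 1 - 2\gamma\mu + \gamma^2\mu^2 = (1-\gamma\mu)^2$ holds iff $\gamma^2\mu^2 \leq 2\gamma\mu - \frac{2\gamma\mu L}{\mu+L} = 2\gamma\mu \cdot \frac{\mu}{\mu+L}$, i.e. $\gamma\mu \leq \frac{2\mu}{\mu+L}$, i.e. $\gamma \leq \frac{2}{\mu+L}$, which is exactly the hypothesis. Taking square roots gives $\|x^+ - x^*\| \leq (1 - \gamma\mu)\|x - x^*\|$, noting $1 - \gamma\mu > 0$ since $\gamma < \frac{2}{\mu+L} \leq \frac{1}{\mu}$.

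The main obstacle, such as it is, is purely bookkeeping: correctly applying the strong-convexity/smoothness co-coercivity inequality with the right constants and then verifying the elementary quadratic comparison $1 - \frac{2\gamma\mu L}{\mu+L} \leq (1-\gamma\mu)^2$ is equivalent to the stepsize restriction $\gamma \leq \frac{2}{\mu+L}$. No deeper difficulty arises; this is a classical contraction estimate for gradient descent on strongly convex functions.
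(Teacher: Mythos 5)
There is a genuine error at the final comparison step, and as written your argument does not prove the stated bound. Your expansion and the application of the strongly-convex co-coercivity inequality are fine, and they give
\[
\|x^+-x^*\|^2 \le \Bigl(1-\tfrac{2\gamma\mu L}{\mu+L}\Bigr)\|x-x^*\|^2+\gamma\Bigl(\gamma-\tfrac{2}{\mu+L}\Bigr)\|\nabla f(x)\|^2 .
\]
Your first algebraic check was actually correct: $(1-\gamma\mu)^2-\bigl(1-\tfrac{2\gamma\mu L}{\mu+L}\bigr)=\gamma\mu^2\bigl(\gamma-\tfrac{2}{\mu+L}\bigr)\le 0$, i.e.\ under the hypothesis $\gamma<\tfrac{2}{\mu+L}$ one has $1-\tfrac{2\gamma\mu L}{\mu+L}\ge(1-\gamma\mu)^2$, with the inequality going the \emph{wrong} way for your purposes. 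Your "fix" then flips an inequality: the condition $1-\tfrac{2\gamma\mu L}{\mu+L}\le(1-\gamma\mu)^2$ is equivalent to $\gamma^2\mu^2\ \ge\ 2\gamma\mu\cdot\tfrac{\mu}{\mu+L}$, i.e.\ $\gamma\ge\tfrac{2}{\mu+L}$, not $\gamma\le\tfrac{2}{\mu+L}$. Consequently, after dropping the nonpositive gradient term you only obtain the contraction factor $\sqrt{1-\tfrac{2\gamma\mu L}{\mu+L}}$, which is strictly larger than $1-\gamma\mu$ in the regime $\gamma<\tfrac{2}{\mu+L}$, so the lemma does not follow from your chain of inequalities.

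The argument is salvageable with one extra observation: do not discard the term $\gamma\bigl(\gamma-\tfrac{2}{\mu+L}\bigr)\|\nabla f(x)\|^2$. Since its coefficient is negative, lower-bound $\|\nabla f(x)\|=\|\nabla f(x)-\nabla f(x^*)\|\ge\mu\|x-x^*\|$ (a direct consequence of $\mu$-strong convexity and Cauchy--Schwarz); then
\[
1-\tfrac{2\gamma\mu L}{\mu+L}+\gamma\mu^2\Bigl(\gamma-\tfrac{2}{\mu+L}\Bigr)
=1-\tfrac{2\gamma\mu(\mu+L)}{\mu+L}+\gamma^2\mu^2=(1-\gamma\mu)^2,
\]
which gives exactly the claimed estimate (and $1-\gamma\mu>0$ since $\gamma<\tfrac{2}{\mu+L}\le\tfrac1\mu$). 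An alternative standard route is to apply co-coercivity to the convex, $(L-\mu)$-smooth function $f-\tfrac{\mu}{2}\|\cdot\|^2$ and write $x^+-x^*=(1-\gamma\mu)(x-x^*)-\gamma\bigl(\nabla g(x)-\nabla g(x^*)\bigr)$, which again yields the factor $1-\gamma\mu$ precisely when $\gamma\le\tfrac{2}{\mu+L}$. For reference, the paper does not prove this lemma itself but cites Lemma 10 of the Qu--Li paper, whose proof is along these repaired lines.
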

\begin{proof}
    See, e.g., Lemma 10 in \cite{qu2017harnessing}.
\end{proof}


\begin{lemma}\label{lem: hypergrad}
    Define 
    \[
        \kappa = \max\left(\frac{L_{\nabla f} }{\mu_g}, \frac{L_{\nabla g}}{\mu_g}\right),\ z^*(x) = \left(\nabla_{22}^2g(x, y^*(x))\right)^{-1}\nabla_2f(x, y^*(x)).
    \]
    Suppose Assumption \ref{aspt: smoothness} holds. Then $\Phi(x)$ is differentiable and $\nabla\Phi(x)$ is given by
    Then $\Phi(x), y^*(x), z^*(x)$ are differentiable and $\nabla \Phi(x), y^*(x), z^*(x)$ are $L_{\nabla\Phi}, L_{y^*}, L_{z^*}$-Lipschitz continuous respectively, with their expressions as
    \begin{align}
        &\nabla \Phi(x) = \nabla_1 f(x, y^*(x)) - \nabla_{12}^2 g(x, y^*(x))\left(\nabla_{22}^2g(x, y^*(x))\right)^{-1}\nabla_2 f(x, y^*(x)), \label{eq: hypergrad} \\
        &\nabla y^*(x) = -\nabla_{12}^2 g(x, y^*(x))\left(\nabla_{22}^2g(x, y^*(x))\right)^{-1}. \label{eq: nabla_y}
    \end{align}
    and the constants given by
    \begin{align*}
    L_{y^*} &= \frac{L_{\nabla g}}{\mu_g} = \cO\left(\kappa\right),\  L_{z^*} = \sqrt{1 + L_{y^*}^2}\left(\frac{L_{\nabla f}}{\mu_g} + \frac{L_fL_{\nabla_{22}^2g}}{\mu_g^2}\right) = \cO\left(\kappa^3\right), \\
        L_{\nabla \Phi} &= L_{\nabla f} + \frac{2L_{\nabla f}L_{\nabla g} + L_{f}^2L_{\nabla^2 g}}{\mu_g} + \frac{2L_{f}L_{\nabla g}L_{\nabla^2 g}+L_{\nabla f}L_{\nabla g}^2}{\mu_g^2} + \frac{L_{f}L_{\nabla^2 g}L_{\nabla g}^2}{\mu_g^3} = \cO\left(\kappa^3\right).
    \end{align*}
    Moreover, we have
    \begin{equation}\label{ineq: z_star_bound}
         \norm{z^*(x)}\leq \frac{L_f}{\mu_g}.
    \end{equation}
\end{lemma}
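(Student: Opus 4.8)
\textbf{Proof proposal for Lemma~\ref{lem: hypergrad}.}
The plan is to establish the three claimed properties (differentiability with explicit formulas; Lipschitz continuity of $\nabla\Phi$, $y^*$, $z^*$ with the stated constants; and the bound on $\|z^*(x)\|$) by the standard implicit-function-theorem argument combined with careful bookkeeping of Lipschitz constants. First I would invoke the implicit function theorem: since $g\in\cC_L^{2,2}$ and $g(x,\cdot)$ is $\mu_g$-strongly convex, $\nabla_{22}^2 g(x,y^*(x))$ is invertible with $\|(\nabla_{22}^2 g)^{-1}\|\le 1/\mu_g$, so $y^*(x)$ is differentiable and differentiating the optimality condition $\nabla_2 g(x,y^*(x))=0$ yields $\nabla_{21}^2 g + \nabla_{22}^2 g\,\nabla y^*(x)=0$, i.e.\ \eqref{eq: nabla_y}. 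Plugging this into the chain rule for $\Phi(x)=f(x,y^*(x))$ gives \eqref{eq: hypergrad}. The bound \eqref{ineq: z_star_bound} is immediate: $\|z^*(x)\|\le\|(\nabla_{22}^2 g(x,y^*(x)))^{-1}\|\,\|\nabla_2 f(x,y^*(x))\|\le L_f/\mu_g$ using Assumption~\ref{aspt: smoothness}(c).

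Next I would derive the Lipschitz constants. For $L_{y^*}$: from \eqref{eq: nabla_y}, $\|\nabla y^*(x)\|\le \|\nabla_{12}^2 g\|\,\|(\nabla_{22}^2 g)^{-1}\|\le L_{\nabla g}/\mu_g$, which gives $L_{y^*}=L_{\nabla g}/\mu_g=\cO(\kappa)$. For $L_{z^*}$, write $z^*(x)=M(x)^{-1}b(x)$ with $M(x)=\nabla_{22}^2 g(x,y^*(x))$ and $b(x)=\nabla_2 f(x,y^*(x))$, and use the resolvent identity $M(x)^{-1}-M(x')^{-1}=M(x)^{-1}(M(x')-M(x))M(x')^{-1}$ together with $z^*(x)-z^*(x')=M(x)^{-1}(b(x)-b(x'))+(M(x)^{-1}-M(x')^{-1})b(x')$. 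Here the map $x\mapsto(x,y^*(x))$ is $\sqrt{1+L_{y^*}^2}$-Lipschitz, so $b$ is $\sqrt{1+L_{y^*}^2}\,L_{\nabla f}$-Lipschitz and $M$ is $\sqrt{1+L_{y^*}^2}\,L_{\nabla^2 g}$-Lipschitz; combining with $\|M^{-1}\|\le 1/\mu_g$ and $\|b\|\le L_f$ yields $L_{z^*}=\sqrt{1+L_{y^*}^2}\bigl(L_{\nabla f}/\mu_g + L_f L_{\nabla^2 g}/\mu_g^2\bigr)=\cO(\kappa^3)$. For $L_{\nabla\Phi}$, I would differentiate \eqref{eq: hypergrad} similarly, or more simply bound $\|\nabla\Phi(x)-\nabla\Phi(x')\|$ directly by splitting the difference of $\nabla_1 f(x,y^*(x))$ and of the product $\nabla_{12}^2 g\cdot z^*$ into telescoping terms, each handled by Lipschitzness of $\nabla f$, $\nabla^2 g$, $y^*$, $z^*$ and the uniform bounds $\|\nabla_{12}^2 g\|\le L_{\nabla g}$, $\|z^*\|\le L_f/\mu_g$; collecting terms over the denominators $\mu_g,\mu_g^2,\mu_g^3$ reproduces the stated formula for $L_{\nabla\Phi}=\cO(\kappa^3)$.

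The main obstacle is purely bookkeeping: tracking the exact coefficients so that the constants come out precisely as $L_{\nabla f}+(2L_{\nabla f}L_{\nabla g}+L_f^2 L_{\nabla^2 g})/\mu_g+\cdots$ rather than merely order-correct, which requires being careful about which terms pick up the factor $\sqrt{1+L_{y^*}^2}$ and which pick up extra powers of $L_{\nabla g}/\mu_g$ from $\nabla y^*(x)$. There is no conceptual difficulty beyond the implicit function theorem; alternatively, I would simply cite Lemma~2.2 of \cite{ghadimi2018approximation} for the differentiability and the $L_{\nabla\Phi}$ bound (this is precisely the subset relation quoted in the introduction) and only supply the short computations for $L_{y^*}$, $L_{z^*}$, and \eqref{ineq: z_star_bound}, since those are the pieces specific to the auxiliary variable $z$ used in our algorithm.
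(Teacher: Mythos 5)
Your proposal is correct and follows essentially the same route as the paper: the paper likewise cites Lemma~2.2 of \cite{ghadimi2018approximation} for \eqref{eq: hypergrad} and the Lipschitzness of $\nabla\Phi$ and $y^*$, then proves the $z^*$ part directly via the same resolvent-identity decomposition with the $\sqrt{1+L_{y^*}^2}$ factor from $x\mapsto(x,y^*(x))$, and obtains \eqref{ineq: z_star_bound} from $\mu_g$-strong convexity together with Assumption~\ref{aspt: smoothness}(c). No gaps.
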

\begin{proof}
    See Lemma 2.2 in \cite{ghadimi2018approximation} for the proof of \eqref{eq: hypergrad}, Lipschitz continuity of $\nabla\Phi$ and $y^*$. For the Lipschitz continuity of $z^*$ we have for any $x, \tilde x$, we know
    \begin{align*}
        &\norm{z^*(x)- z^*(\tilde x)} \\
        = &\norm{\left(\nabla_{22}^2g(x, y^*(x))\right)^{-1}\nabla_2f(x, y^*(x)) - \left(\nabla_{22}^2g(\tilde x, y^*(\tilde x))\right)^{-1}\nabla_2f(\tilde x, y^*(\tilde x))} \\
        \leq & \norm{\left(\nabla_{22}^2g(x, y^*(x))\right)^{-1}\nabla_2f(x, y^*(x)) - \left(\nabla_{22}^2g(\tilde x, y^*(\tilde x))\right)^{-1}\nabla_2f(x, y^*(x))} \\
        & + \norm{\left(\nabla_{22}^2g(\tilde x, y^*(\tilde x))\right)^{-1}\nabla_2f(x, y^*(x)) - \left(\nabla_{22}^2g(\tilde x, y^*(\tilde x))\right)^{-1}\nabla_2f(\tilde x, y^*(\tilde x))} \\
        \leq &L_f\norm{\left(\nabla_{22}^2g(x,y^*(x))\right)^{-1}}\norm{\nabla_{22}^2g(x,y^*(x)) - \nabla_{22}^2g(\tilde x,y^*(\tilde x))}\norm{\left(\nabla_{22}^2g(x,y^*(x))\right)^{-1}} \\
        &+ \frac{1}{\mu_g}\norm{\nabla_2f(x, y^*(x)) - \nabla_2f(\tilde x, y^*(\tilde x))} \\
        \leq &\frac{L_f L_{\nabla_{22}^2g}}{\mu_g^2}\sqrt{\norm{x - \tilde x}^2 + \norm{y^*(x) - y^*(\tilde x)}^2} + \frac{L_{\nabla f}}{\mu_g}\sqrt{\norm{x - \tilde x}^2 + \norm{y^*(x) - y^*(\tilde x)}^2} \\
        \leq &L_{z^*}\norm{x - \tilde x},
    \end{align*}
    where the first inequality uses triangle inequality, the second and third inequalities use Assumption \ref{aspt: smoothness}, and the fourth inequality uses the Lipschitz continuity of $y^*(x)$. The inequality in \eqref{ineq: z_star_bound} holds since $g(x,\cdot)$ is $\mu_g$-strongly convex and $\norm{\nabla_2f(x, y^*(x))}\leq L_f$ (see Assumption \ref{aspt: smoothness}).
\end{proof}

\begin{lemma}\label{lem: eta_property}
    For any convex compact set $\cX$, function $\eta_{\cX}(x,h,\tau)$ defined in Section \ref{sec: bo_analysis} is differentiable 
    and $\nabla \eta_{\cX}$ is $L_{\nabla\eta_{\cX}}$-Lipschitz continuous, with the closed form exression and constant given by
    \begin{equation*}
        \nabla_1\eta_{\cX}(x,h,\tau) = -h + \frac{1}{\tau}(x - \bar d),\ \nabla_2\eta_{\cX}(x,h,\tau) = \bar d - x,\  L_{\nabla\eta_{\cX}} = 2 \sqrt{\left(1+\frac{1}{\tau} \right)^2 + \left(1 + \frac{\tau}{2}\right)^2}. \label{eq: nabla_eta}
    \end{equation*}
    where $\bar d$ is defined as
    \[
        \bar d = \argmin_{d\in \cX}\left\{\<h, d - x> + \frac{1}{2\tau}\norm{d-x}^2\right\} = \Pi_{\cX}(x - \tau h),
    \]
    which satisfies the optimality condition
    \begin{equation}\label{ineq: eta_opt_condition}
        \<h + \frac{1}{\tau}(\bar d - x), d - \bar d>\geq 0,\ \text{ for all } d\in \cX.
    \end{equation}
\end{lemma}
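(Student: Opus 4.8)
The plan is to reduce the entire statement to elementary facts about the Euclidean projection onto $\cX$ and the Moreau envelope of the indicator $\iota_{\cX}$. Completing the square in the definition of $\eta_{\cX}$ gives, for every $d$,
\[
\<h, d-x> + \tfrac{1}{2\tau}\norm{d-x}^2 \;=\; \tfrac{1}{2\tau}\norm{d - (x-\tau h)}^2 - \tfrac{\tau}{2}\norm{h}^2 ,
\]
so the inner problem is a scaled projection: the objective is $\tfrac1\tau$-strongly convex in $d$, hence attains its minimum over the convex compact set $\cX$ at the unique point $\bar d = \Pi_{\cX}(x-\tau h)$, and
\[
\eta_{\cX}(x,h,\tau) \;=\; \tfrac{1}{\tau}\, e_{\cX}(x-\tau h) - \tfrac{\tau}{2}\norm{h}^2, \qquad e_{\cX}(u) := \tfrac{1}{2}\,\mathrm{dist}(u,\cX)^2 .
\]
The optimality condition \eqref{ineq: eta_opt_condition} is then just the first-order variational inequality for constrained convex minimization: the map $d\mapsto \<h,d-x> + \tfrac1{2\tau}\norm{d-x}^2$ is differentiable and convex with gradient $h + \tfrac1\tau(d-x)$, and since $\bar d$ minimizes it over the convex set $\cX$ one gets $\<h + \tfrac1\tau(\bar d-x),\, d-\bar d> \ge 0$ for all $d\in\cX$; equivalently $\<\bar d - (x-\tau h),\, d-\bar d>\ge 0$, which re-confirms $\bar d = \Pi_{\cX}(x-\tau h)$.

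For differentiability and the closed forms of the partial gradients I would invoke the classical fact that $e_{\cX}$, the Moreau envelope of $\iota_{\cX}$, is continuously differentiable everywhere with $\nabla e_{\cX}(u) = u - \Pi_{\cX}(u)$, and that $\nabla e_{\cX} = I - \Pi_{\cX}$ is nonexpansive because $\Pi_{\cX}$ is firmly nonexpansive. Applying the chain rule to $\eta_{\cX}(x,h,\tau) = \tfrac1\tau e_{\cX}(x-\tau h) - \tfrac\tau2\norm{h}^2$ in $x$ and in $h$ yields
\[
\nabla_1\eta_{\cX}(x,h,\tau) = \tfrac{1}{\tau}\bigl((x-\tau h)-\bar d\bigr) = -h + \tfrac1\tau(x-\bar d), \qquad
\nabla_2\eta_{\cX}(x,h,\tau) = \bigl(\bar d-(x-\tau h)\bigr) - \tau h = \bar d - x ,
\]
which is exactly \eqref{eq: nabla_eta}. (Equivalently, these formulas are an instance of Danskin's theorem for an objective jointly smooth in all variables and strongly convex in the decision variable, whose unique minimizer $\bar d(x,h)$ depends continuously on $(x,h)$.)

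It remains to bound the Lipschitz constant of $\nabla\eta_{\cX} = (\nabla_1\eta_{\cX},\nabla_2\eta_{\cX})$, and the only nontrivial input is that $\bar d$ is Lipschitz in $(x,h)$: nonexpansiveness of $\Pi_{\cX}$ gives
\[
\norm{\bar d(x,h) - \bar d(x',h')} \le \norm{(x-\tau h)-(x'-\tau h')} \le \norm{x-x'} + \tau\norm{h-h'}.
\]
Substituting this into the two gradient formulas and using the triangle inequality gives $\norm{\nabla_1\eta_{\cX}(x,h,\tau)-\nabla_1\eta_{\cX}(x',h',\tau)} \le \tfrac2\tau\norm{x-x'} + 2\norm{h-h'} \le 2(1+\tfrac1\tau)\norm{(x,h)-(x',h')}$ and likewise $\norm{\nabla_2\eta_{\cX}(x,h,\tau)-\nabla_2\eta_{\cX}(x',h',\tau)} \le 2\norm{x-x'} + \tau\norm{h-h'} \le 2(1+\tfrac\tau2)\norm{(x,h)-(x',h')}$; squaring and summing the two blocks yields $L_{\nabla\eta_{\cX}} = 2\sqrt{(1+1/\tau)^2+(1+\tau/2)^2}$, as claimed.

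The one point needing care rather than routine computation is the differentiability claim: $\eta_{\cX}$ is a pointwise minimum and $\Pi_{\cX}$ is in general nondifferentiable, so one cannot simply plug $\bar d = \Pi_{\cX}(x-\tau h)$ into $\eta_{\cX}$ and differentiate. Routing through $e_{\cX}$, whose $C^{1,1}$ regularity with explicit gradient $I-\Pi_{\cX}$ is the standard Moreau-envelope fact, sidesteps this; alternatively one can give the direct two-sided ``envelope sandwich'' estimate, upper-bounding $\eta_{\cX}(x',h,\tau)-\eta_{\cX}(x,h,\tau)$ by evaluating the inner objective at the feasible point $\bar d(x)$ and lower-bounding it by evaluating at $\bar d(x')$, and then reading off the derivative from the common first-order term using continuity of $\bar d(\cdot)$. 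Everything else is triangle inequalities and nonexpansiveness of $\Pi_{\cX}$.
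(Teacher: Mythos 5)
Your proof is correct, and it is genuinely more self-contained than what the paper does: the paper gives no argument at all for this lemma, deferring entirely to Lemma 3.2 of the cited work of Ghadimi, Ruszczy\'nski and Wang \cite{ghadimi2020single}. Your route — completing the square to write $\eta_{\cX}(x,h,\tau)=\tfrac{1}{\tau}e_{\cX}(x-\tau h)-\tfrac{\tau}{2}\norm{h}^2$ with $e_{\cX}$ the Moreau envelope of the indicator of $\cX$, then invoking $\nabla e_{\cX}=I-\Pi_{\cX}$ and nonexpansiveness of $\Pi_{\cX}$ — cleanly delivers everything at once: the identification $\bar d=\Pi_{\cX}(x-\tau h)$, the variational inequality \eqref{ineq: eta_opt_condition}, the two partial gradients, and, after the block-wise triangle-inequality estimates you give, exactly the stated constant $L_{\nabla\eta_{\cX}}=2\sqrt{(1+1/\tau)^2+(1+\tau/2)^2}$. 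You also correctly flag the one delicate point, namely that differentiability cannot be obtained by naively differentiating through $\bar d$ (since $\Pi_{\cX}$ is not differentiable in general), and you sidestep it with the $C^{1,1}$ regularity of the Moreau envelope (or, equivalently, the envelope sandwich/Danskin argument). What your approach buys is a proof independent of the external reference with all constants made explicit; what the paper's citation buys is brevity, since the identical statement (same gradient formulas and Lipschitz constant) is already proved in \cite{ghadimi2020single}.
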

\begin{proof}
    See Lemma 3.2 in \cite{ghadimi2020single}.
\end{proof}

\subsection{Proof of Theorem \ref{thm: soba_convergence}}\label{sec: thm1proof}
For simplicity, we summarize the notations that will be used in Section \ref{sec: thm1proof} as follows. 
\begin{align}\label{append: notations_simple}
    \begin{aligned}
        &\kappa = \max\left(\frac{L_{\nabla g}}{\mu_g},\ \frac{L_{\nabla f}}{\mu_g}\right),\ w^{k+1} =u_x^{k+1} - J^{k+1}z^k, \\
        &y_{*}^k = y^*(x^k) = \argmin_{y\in \realset^{d_y}} g(x^k, y),\ z_{*}^k  =  \left(\nabla_{22}^2g(x^k, y_{*}^k)\right)^{-1}\nabla_2f(x^k, y_{*}^k), \\
        &\Phi(x) = f(x, y^*(x)),\ \eta_{\cX}(x, h, \tau) = \min_{d\in X}\left\{\<h, d - x> + \frac{1}{2\tau}\norm{d-x}^2\right\}.
    \end{aligned}
\end{align}
In this section we suppose Assumptions \ref{aspt: smoothness} and \ref{aspt: stochastic_grad} hold.
We assume stepsizes in Algorithm \ref{alg:ma-soba} satisfy
\begin{equation}\label{eq: stepsize_ratio_simple}
    \beta_k = c_1\alpha_k,\ \gamma_k = c_2\alpha_k, \theta_k = c_3\alpha_k,
\end{equation}
where $c_1, c_2, c_3 > 0$ are constants to be determined. We will utilize the following merit function in our analysis:
\begin{align}\label{eq: merit_W}
    \begin{aligned}
        &W_k = W_{k,1} + W_{k,2},\\
        & W_{k,1} = \Phi(x^k) -\inf_{x\in\cX} \Phi(x) - \frac{1}{c_3}\eta_{\cX}(x^k,h^k,\tau) \\
        &W_{k,2} = \frac{1}{c_1}\norm{y^k - y_*^k}^2 + \frac{1}{c_2}\norm{z^k - z_*^k}^2.
    \end{aligned}
\end{align}
By definition of $\eta_{\cX}$, we can verify that $W_{k,1}\geq 0$. Moreover, as discussed in Section \ref{sec: bo_analysis}, we consider the following optimality measure:
\begin{equation}\label{eq: V_k_masoba}
    V_k = \frac{1}{\tau^2}\|x_+^k - x^k\|^2 + \|h^k - \nabla \Phi(x^k)\|^2.
\end{equation}
The following Lemma characterizes the relation between $V_k$ and gradient mapping of problem \ref{eq: bo}.
\begin{lemma}\label{lem: x_convergence}
    Suppose Assumptions \ref{aspt: smoothness} and \ref{aspt: stochastic_grad} hold. In Algorithm \ref{alg:ma-soba} we have
    \begin{align*}
        \frac{1}{\tau^2}\norm{x^k - \Pi_{\cX}\left(x^k - \tau\nabla\Phi(x^k)\right) }^2\leq 2V_k.
    \end{align*}
\end{lemma}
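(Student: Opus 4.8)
The plan is to view both quantities appearing in the statement as images of a gradient-mapping-type operator applied to two different ``gradient'' inputs --- the true hypergradient $\nabla\Phi(x^k)$ on one side and the running estimate $h^k$ on the other --- and then exploit non-expansiveness of the Euclidean projection $\Pi_{\cX}$. Concretely, for a fixed $\bar x\in\cX$ and $\tau>0$, consider the map $d\mapsto T(d):=\tfrac{1}{\tau}\bigl(\bar x - \Pi_{\cX}(\bar x-\tau d)\bigr)$. Note that $\tfrac{1}{\tau}(x^k-x_+^k)=T(h^k)$ by the definition $x_+^k=\Pi_{\cX}(x^k-\tau h^k)$, while $\tfrac{1}{\tau}\bigl(x^k-\Pi_{\cX}(x^k-\tau\nabla\Phi(x^k))\bigr)=T(\nabla\Phi(x^k))$ is exactly the gradient mapping whose squared norm we must bound.

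First I would record the (standard) fact that $\Pi_{\cX}$ is $1$-Lipschitz on $\realset^{d_x}$ since $\cX$ is convex, hence $\|T(d_1)-T(d_2)\|=\tfrac{1}{\tau}\|\Pi_{\cX}(\bar x-\tau d_1)-\Pi_{\cX}(\bar x-\tau d_2)\|\le\|d_1-d_2\|$; that is, $T$ is non-expansive in its argument. Applying this with $d_1=\nabla\Phi(x^k)$, $d_2=h^k$, and the triangle inequality gives
\[
    \frac{1}{\tau}\bigl\|x^k-\Pi_{\cX}(x^k-\tau\nabla\Phi(x^k))\bigr\|
    \;\le\; \frac{1}{\tau}\bigl\|x^k-x_+^k\bigr\| \;+\; \bigl\|h^k-\nabla\Phi(x^k)\bigr\|.
\]
Squaring both sides and using $(a+b)^2\le 2a^2+2b^2$ then yields
\[
    \frac{1}{\tau^2}\bigl\|x^k-\Pi_{\cX}(x^k-\tau\nabla\Phi(x^k))\bigr\|^2
    \;\le\; 2\left(\frac{1}{\tau^2}\bigl\|x_+^k-x^k\bigr\|^2 + \bigl\|h^k-\nabla\Phi(x^k)\bigr\|^2\right) = 2V_k,
\]
which is the claim.

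There is essentially no serious obstacle here; the only ingredient beyond elementary inequalities is the non-expansiveness of the projection onto the convex set $\cX$, and the differentiability/well-definedness of $\nabla\Phi$ is already guaranteed by Lemma \ref{lem: hypergrad} under Assumption \ref{aspt: smoothness}. The Assumption \ref{aspt: stochastic_grad} hypothesis is not actually needed for this deterministic inequality, but it is harmless to keep it in the statement for consistency with how the lemma is invoked later. If one wanted to be slightly more careful, one could instead prove the $1$-Lipschitz property of $d\mapsto T(d)$ directly from the firm non-expansiveness of $\Pi_{\cX}$, but the above short argument suffices.
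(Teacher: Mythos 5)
Your proposal is correct and follows essentially the same argument as the paper: decompose the gradient mapping through the point $x_+^k=\Pi_{\cX}(x^k-\tau h^k)$, use non-expansiveness of the projection onto the convex set $\cX$, and conclude with the elementary inequality $\|a+b\|^2\le 2\|a\|^2+2\|b\|^2$. The only cosmetic difference is that you phrase the splitting via the triangle inequality before squaring, whereas the paper squares the two-term decomposition directly; the content is identical.
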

\begin{proof}
    Note that we have
    \begin{align*}
        &\norm{x^k - \Pi_{\cX}\left(x^k - \tau\nabla\Phi(x^k)\right)}^2\\
        \leq &2\left(\norm{x_+^k - x^k}^2 + \norm{\Pi_{\cX}\left(x^k - \tau h^k\right) - \Pi_{\cX}\left(x^k - \tau\nabla\Phi(x^k)\right)}^2\right) \\
        \leq &2\left(\norm{x_+^k - x^k}^2 + \tau^2\norm{h^k - \nabla\Phi(x^k) }^2\right) = 2V_k,
    \end{align*}
    where the first inequality uses Cauchy-Schwarz inequality and the second inequality uses the non-expansiveness of projection onto a convex compact set. This completes the proof.
\end{proof}

Next we present a technical lemma about the variance of $w^{k+1}$ and the bound for $\norm{h^{k+1} - h^k}$.
\begin{lemma}\label{lem: variance_simple}
    Suppose Assumptions \ref{aspt: smoothness} and \ref{aspt: stochastic_grad} hold. In Algorithm \ref{alg:ma-soba} we have
    \begin{align}
        &\E\left[\norm{w^{k+1} - \E\left[w^{k+1}\middle|\setF_k\right]}^2\right] \leq \sigma_{w,k+1}^2 \notag\\
        &\sigma_{w,k+1}^2 := \sigma_{w}^2 + 2\sigma_{g,2}^2\E\left[\norm{z^k - z_*^k}^2\right],\ \sigma_w^2 = \sigma_{f,1}^2 + \frac{2\sigma_{g,2}^2L_f^2}{\mu_g^2}, \label{ineq: w_var_simple}\\
        &\E\left[\norm{h^{k+1} - h^k}^2\right]\leq \sigma_{h, k}^2,\notag \\
        &\sigma_{h, k}^2 := 2\theta_k^2\E\left[\norm{h^k - \nabla\Phi(x^k)}^2 + \norm{\E\left[w^{k+1}\middle|\setF_k\right] -\nabla\Phi(x^k)}^2\right]+\theta_k^2\sigma_{w,k+1}^2 \label{ineq: h_var_simple}
    \end{align}
\end{lemma}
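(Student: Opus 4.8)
The plan is to prove the two displayed bounds in turn; both reduce to the unbiasedness, bounded-variance, and conditional-independence properties of the stochastic oracles in Assumption~\ref{aspt: stochastic_grad}, together with the a priori bound $\norm{z^*(x)}\le L_f/\mu_g$ from \eqref{ineq: z_star_bound}. For \eqref{ineq: w_var_simple}, I would first use that $z^k$ is $\setF_k$-measurable (by the definition of $\setF_k$ in \eqref{append: notations_simple}), so that $\E[w^{k+1}\mid\setF_k]=\nabla_1 f(x^k,y^k)-\nabla_{12}^2 g(x^k,y^k)z^k$ and hence
\[
  w^{k+1}-\E[w^{k+1}\mid\setF_k]=\bigl(u_x^{k+1}-\nabla_1 f(x^k,y^k)\bigr)-\bigl(J^{k+1}-\nabla_{12}^2 g(x^k,y^k)\bigr)z^k .
\]
The two summands on the right are conditionally mean-zero and conditionally independent given $\setF_k$, so the cross term drops out of the conditional second moment; combined with $\norm{Av}\le\norm{A}\norm{v}$ this gives $\E[\norm{w^{k+1}-\E[w^{k+1}\mid\setF_k]}^2\mid\setF_k]\le\sigma_{f,1}^2+\sigma_{g,2}^2\norm{z^k}^2$. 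Then I would bound $\norm{z^k}^2\le 2\norm{z^k-z_*^k}^2+2\norm{z_*^k}^2$ and invoke $\norm{z_*^k}=\norm{z^*(x^k)}\le L_f/\mu_g$; taking total expectation yields exactly $\sigma_w^2+2\sigma_{g,2}^2\E[\norm{z^k-z_*^k}^2]=\sigma_{w,k+1}^2$.

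For \eqref{ineq: h_var_simple}, the moving-average update of $h^k$ in Algorithm~\ref{alg:ma-soba} gives $h^{k+1}-h^k=\theta_k(w^{k+1}-h^k)$, hence $\norm{h^{k+1}-h^k}^2=\theta_k^2\norm{w^{k+1}-h^k}^2$. Writing $w^{k+1}-h^k=(w^{k+1}-\E[w^{k+1}\mid\setF_k])+(\E[w^{k+1}\mid\setF_k]-h^k)$, the first summand is conditionally mean-zero while the second is $\setF_k$-measurable, so the conditional second moment splits as the sum of the two conditional second moments. The first is controlled by the conditional bound from the previous step, and in the second I would insert $\pm\nabla\Phi(x^k)$ and apply $\norm{a+b}^2\le 2\norm{a}^2+2\norm{b}^2$. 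Passing to total expectations then produces $\theta_k^2\sigma_{w,k+1}^2+2\theta_k^2\,\E[\norm{\E[w^{k+1}\mid\setF_k]-\nabla\Phi(x^k)}^2+\norm{h^k-\nabla\Phi(x^k)}^2]=\sigma_{h,k}^2$, as claimed.

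I do not expect a genuine obstacle here; the lemma is essentially routine once the right decompositions are identified. The only point that requires care is the measurability bookkeeping — keeping track of which terms ($z^k$, $x^k$, $h^k$, $y^k$) are $\setF_k$-measurable so that the conditional cross terms genuinely vanish, and applying the \emph{conditional} variance bounds of Assumption~\ref{aspt: stochastic_grad} before taking the outer expectation rather than after. All of these measurability facts follow directly from the definition of $\setF_k$ in \eqref{append: notations_simple}.
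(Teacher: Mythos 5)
Your proposal is correct and follows essentially the same route as the paper's proof: decompose $w^{k+1}-\E[w^{k+1}\mid\setF_k]$ into the two conditionally independent, mean-zero oracle errors so the cross term vanishes, bound $\norm{z^k}^2$ via $z_*^k$ and \eqref{ineq: z_star_bound}, and for $h^{k+1}-h^k$ use the orthogonal split of $w^{k+1}-h^k$ into its conditional mean deviation and the $\setF_k$-measurable part, inserting $\pm\nabla\Phi(x^k)$. No gaps; the measurability bookkeeping you flag is exactly the only point of care in the paper's argument as well.
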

\begin{proof}
We first consider $w^k$. Note that
\[
    w^{k+1} - \E\left[w^{k+1}\middle|\setF_k\right] = u_x^{k+1} - \E\left[u_x^{k+1}\middle|\setF_k\right] - \left(J^{k+1} - \E\left[J^{k+1}\middle|\setF_k\right]\right)z^k.
\]
Hence we know 
\begin{align*}
    &\E\left[\norm{w^{k+1} - \E\left[w^{k+1}\middle|\setF_k\right]}^2\middle|\setF_k\right] \\
        = &\E\left[\norm{u_x^{k+1} - \E\left[u_x^{k+1}\middle|\setF_k\right]}^2\middle|\setF_k\right] + 
        \E\left[\norm{J^{k+1} - \E\left[J^{k+1}\middle|\setF_k\right]}^2\middle|\setF_k\right]\norm{z^k}^2\\
        \leq &\sigma_{f,1}^2 + 2\sigma_{g,2}^2\norm{z_*^k}^2 + 2\sigma_{g,2}^2\norm{z^k - z_*^k}^2 \leq \sigma_{f,1}^2 + \frac{2\sigma_{g,2}^2L_f^2}{\mu_g^2} + 2\sigma_{g,2}^2\norm{z^k - z_*^k}^2,
\end{align*}
where the first equality uses independence, the first inequality uses Cauchy-Schwarz inequality, and the second inequality uses \eqref{ineq: z_star_bound}. This proves \eqref{ineq: w_var_simple}. Next for $\norm{h^{k+1} - h^k}$ we have
\begin{align*}
    &\E\left[\norm{h^{k+1} - h^k}^2\middle|\setF_k\right] \\
    = &\theta_k^2\E\left[\norm{h^k - \E\left[w^{k+1}\middle|\setF_k\right]}^2\middle|\setF_k\right] + \theta_k^2\E\left[\norm{w^{k+1} - \E\left[w^{k+1}\middle|\setF_k\right] }^2\middle|\setF_k\right] \\
    \leq &2\theta_k^2\E\left[\norm{h^k - \nabla\Phi(x^k)}^2\middle|\setF_k\right] + 2\theta_k^2\E\left[\norm{\E\left[w^{k+1}\middle|\setF_k\right] -\nabla\Phi(x^k) }^2 \middle|\setF_k\right] + \theta_k^2\sigma_{w,k+1}^2,
\end{align*}
which proves of \eqref{ineq: h_var_simple} by taking expectation on both sides.
\end{proof}
\begin{remark}
    We would like to highlight that in \eqref{ineq: w_var_simple}, we explicitly characterize the upper bound of the variance of $w^{k+1}$, which contains $\E\left[\norm{z^k -z_*^k}^2\right]$ and requires further analysis. In contrast, Assumption 3.7 in \cite{dagreou2022framework} directly assumes the second moment of $D_x^t$ is uniformly bounded, i.e.,
    \[
        \E\left[\norm{D_x^t}^2\right]\leq B_x^2 \text{ for some constant } B_x \geq 0,
    \]
    Note that $D_x^t$ in \cite{dagreou2022framework} is the same as our $w^{k+1}$ (see \eqref{eq: x update mean}, line 5 of Algorithm \ref{alg:ma-soba} and definition of $w^{k+1}$ in \eqref{append: notations_simple}). The second moment bound can directly imply the variance bound, i.e.,
    \[
        \E\left[\norm{D_x^t - \E\left[D_x^t\right]}^2\right]\leq \E\left[\norm{D_x^t}^2\right]\leq B_x^2.
    \]
    This implies that some stronger assumptions are needed to guarantee Assumption 3.7 in \cite{dagreou2022framework}, as also pointed out by the authors (see discussions right below it). Instead, our refined analysis does not require that.
\end{remark}

\subsubsection{Hypergradient Estimation Error}
Note that Assumptions 3.1 and 3.2 in \cite{dagreou2022framework} state that the upper-level function $f$ is twice differentiable, the lower-level function $g$ is three times differentiable and $\nabla^2f, \nabla^3g$ are Lipschitz continuous so that $z_*^k$, as a function of $x^k$ (see \eqref{append: notations_simple}), is smooth, which is a crucial condition for (63) - (67) in \cite{dagreou2022framework}, which follows the analysis in Equation (49) in \cite{chen2021closing}. In this section we show that, by incorporating the moving average technique recently introduced to decentralized bilevel optimization \cite{chen2022decentralizeddsbo}, we can remove this additional assumption. We have the following lemma characterizing the error induced by $y^k$ and $z^k$.
\begin{lemma}\label{lem: yz_error}
    Suppose Assumptions \ref{aspt: smoothness} and \ref{aspt: stochastic_grad} hold. If the stepsizes satisfy
    \begin{equation}\label{ineq: beta_gamma_condition_simple}
        \beta_k < \frac{2}{\mu_g + L_{\nabla g}},\ \gamma_k \leq \min\left(\frac{1}{4\mu_g},\ \frac{0.06\mu_g}{\sigma_{g,2}^2}\right)
    \end{equation}
    then in Algorithm \ref{alg:ma-soba} we have
    \begin{align}\label{ineq: yz_decrease_simple}
        \begin{aligned}
            &\sum_{k=0}^{K}\alpha_k\E\left[\norm{y^k - y_*^k}^2\right]\leq C_{yx}\sum_{k=0}^{K}\alpha_k \E\left[\norm{x_+^k - x^k}^2\right] + C_{y,0} + C_{y,1}\left(\sum_{k=0}^{K}\alpha_k^2\right) \\
        &\sum_{k=0}^{K}\alpha_k\E\left[\norm{z^k-z_*^k}^2\right] \leq C_{zx}\sum_{k=0}^{K}\alpha_k \E\left[\norm{x_+^k - x^k}^2\right] + C_{z,0} + C_{z,1}\left(\sum_{k=0}^{K}\alpha_k^2\right).
        \end{aligned}
    \end{align}
    where the constants are defined as
    \begin{align*}
         &C_{yx} = \frac{2L_{y^*}^2}{c_1^2\mu_g^2},\ C_{y,0} = \frac{1}{c_1\mu_g}\E\left[\norm{y^0 - y_*^0}^2\right],\ C_{y,1}=\frac{2c_1\sigma_{g,1}^2}{\mu_g}, \\
         &C_{zx} = \frac{5L_f^2}{\mu_g^2}\left(\frac{L_{\nabla_{22}^2g}^2}{\mu_g^2} + 1\right)\frac{2L_{y^*}^2}{c_1^2\mu_g^2} + \frac{4L_{z^*}^2}{c_2^2\mu_g^2}\\
         &C_{z,0} = \frac{5L_f^2}{\mu_g^2}\left(\frac{L_{\nabla_{22}^2g}^2}{\mu_g^2} + 1\right)\cdot\frac{1}{c_1\mu_g}\E\left[\norm{y^0 - y_*^0}^2\right] + \frac{1}{c_2\mu_g}\E\left[\norm{z^0 - z_*^0}^2\right]\\
         &C_{z,1} = \frac{5L_f^2}{\mu_g^2}\left(\frac{L_{\nabla_{22}^2g}^2}{\mu_g^2} + 1\right)\cdot\frac{2c_1\sigma_{g,1}^2}{\mu_g} + \frac{2c_2\sigma_w^2}{\mu_g}.
    \end{align*}
\end{lemma}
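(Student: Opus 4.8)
The plan is to derive one-step contraction inequalities for $\norm{y^k-y_*^k}^2$ and $\norm{z^k-z_*^k}^2$ separately, telescope them, and then convert the resulting drift terms (coming from the movement of $x^k$, which changes the targets $y_*^k$ and $z_*^k$) into $\sum_k \alpha_k\norm{x_+^k-x^k}^2$ using the Lipschitzness of $y^*$ and $z^*$ from Lemma~\ref{lem: hypergrad} together with the update rule $x^{k+1}-x^k=\alpha_k(x_+^k-x^k)$.

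First I would handle the $y$-sequence. Write $y^{k+1}-y_*^{k+1} = (y^{k+1}-y_*^k) + (y_*^k - y_*^{k+1})$. Conditioning on $\setF_k$, the update $y^{k+1}=y^k-\beta_k v^{k+1}$ with $\E[v^{k+1}\mid\setF_k]=\nabla_2 g(x^k,y^k)$ is an inexact GD step on the $\mu_g$-strongly-convex, $L_{\nabla g}$-smooth map $g(x^k,\cdot)$; by Lemma~\ref{lem: gd_decrease} (applied to the mean iterate) plus the variance bound $\sigma_{g,1}^2$, and using $\beta_k<2/(\mu_g+L_{\nabla g})$, one gets $\E\norm{y^{k+1}-y_*^k}^2 \le (1-\beta_k\mu_g)^2\E\norm{y^k-y_*^k}^2 + \beta_k^2\sigma_{g,1}^2$ (or the looser $(1-\beta_k\mu_g)$ form after Young). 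Then $\norm{y_*^k-y_*^{k+1}} = \norm{y^*(x^k)-y^*(x^{k+1})} \le L_{y^*}\norm{x^{k+1}-x^k} = L_{y^*}\alpha_k\norm{x_+^k-x^k}$. Combining via Young's inequality with a weight $\sim 1/(\beta_k\mu_g)$ to absorb the cross term into the contraction, and recalling $\beta_k=c_1\alpha_k$, yields a recursion of the form $\E\norm{y^{k+1}-y_*^{k+1}}^2 \le (1-\tfrac12 c_1\mu_g\alpha_k)\E\norm{y^k-y_*^k}^2 + \tfrac{L_{y^*}^2}{c_1\mu_g}\alpha_k\E\norm{x_+^k-x^k}^2 + c_1^2\sigma_{g,1}^2\alpha_k^2$. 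Rearranging to isolate $\alpha_k\E\norm{y^k-y_*^k}^2$ on the left (dividing by $\tfrac12 c_1\mu_g$) and summing over $k=0,\dots,K$ telescopes the difference term into $\tfrac{2}{c_1\mu_g}\E\norm{y^0-y_*^0}^2$, giving the first inequality in \eqref{ineq: yz_decrease_simple} with exactly the stated constants $C_{yx},C_{y,0},C_{y,1}$.

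Next the $z$-sequence, which is the more delicate one. The update $z^{k+1}=z^k-\gamma_k(H^{k+1}z^k-u_y^{k+1})$ has conditional mean $z^k-\gamma_k(\nabla_{22}^2 g(x^k,y^k)z^k-\nabla_2 f(x^k,y^k))$, i.e.\ an inexact GD step on the quadratic whose minimizer is $\hat z^k := (\nabla_{22}^2 g(x^k,y^k))^{-1}\nabla_2 f(x^k,y^k)$ — note this target uses $y^k$, not $y_*^k$, so it differs from $z_*^k$. I would split $\norm{z^{k+1}-z_*^{k+1}}$ through the intermediate points $\hat z^k$ and then $z_*^k$: (i) contraction toward $\hat z^k$ via Lemma~\ref{lem: gd_decrease}, using $\gamma_k\le 1/(4\mu_g)$ for the deterministic part and $\gamma_k\le 0.06\mu_g/\sigma_{g,2}^2$ to control the extra variance term $\gamma_k^2\sigma_{g,2}^2\norm{z^k}^2$ that the stochastic Hessian $H^{k+1}$ contributes (bounding $\norm{z^k}^2\le 2\norm{z_*^k}^2+2\norm{z^k-z_*^k}^2\le 2L_f^2/\mu_g^2 + 2\norm{z^k-z_*^k}^2$ via \eqref{ineq: z_star_bound}, which is why $\sigma_w^2$ and that extra term appear); (ii) bound $\norm{\hat z^k - z_*^k}$ in terms of $\norm{y^k-y_*^k}$: since $\hat z^k-z_*^k$ compares $(\nabla_{22}^2 g(x^k,\cdot))^{-1}\nabla_2 f(x^k,\cdot)$ evaluated at $y^k$ versus $y_*^k$, the same triangle-inequality / Lipschitz manipulation as in the proof of Lemma~\ref{lem: hypergrad} gives $\norm{\hat z^k-z_*^k}^2 \le \tfrac{5L_f^2}{\mu_g^2}\big(\tfrac{L_{\nabla_{22}^2 g}^2}{\mu_g^2}+1\big)\norm{y^k-y_*^k}^2$ (the factor $5$ and the bracketed form are exactly engineered to match $C_{zx},C_{z,0},C_{z,1}$); (iii) drift $\norm{z_*^k-z_*^{k+1}}\le L_{z^*}\alpha_k\norm{x_+^k-x^k}$ from Lemma~\ref{lem: hypergrad}. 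Assembling these with Young's inequalities (weights $\sim 1/(\gamma_k\mu_g)$ to re-absorb cross terms into the $(1-\Theta(\gamma_k\mu_g))$ contraction), using $\gamma_k=c_2\alpha_k$, and telescoping as before produces the second inequality; crucially the $\sum_k\alpha_k\E\norm{y^k-y_*^k}^2$ term generated in step (ii) is itself bounded by the already-proved $y$-inequality, which is how $C_{y,0},C_{y,1}$ propagate into $C_{z,0},C_{z,1}$ and $C_{yx}$ into $C_{zx}$.

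The main obstacle is step (i)–(ii) bookkeeping for $z$: one must simultaneously (a) keep the variance contribution of the stochastic Hessian from destabilizing the contraction — this is what forces the two-part stepsize restriction $\gamma_k\le\min(1/(4\mu_g),\,0.06\mu_g/\sigma_{g,2}^2)$ and requires carrying the $\norm{z^k-z_*^k}^2$ piece of $\norm{z^k}^2$ through the recursion so it can be absorbed by the contraction margin, and (b) correctly route the error through the two auxiliary targets $\hat z^k$ and $z_*^k$ so that the $y$-error enters with the precise constant $\tfrac{5L_f^2}{\mu_g^2}(\tfrac{L_{\nabla_{22}^2g}^2}{\mu_g^2}+1)$. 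Everything else — the $y$-recursion, the telescoping, collecting $\alpha_k^2$ terms — is routine. Notably, nowhere does the argument require differentiability or Lipschitzness of $\nabla z^*$; only $z^*$ itself being Lipschitz (Lemma~\ref{lem: hypergrad}) is used, which is exactly the point of the refined analysis.
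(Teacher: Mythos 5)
Your proposal is correct and follows essentially the same route as the paper's proof: a one-step contraction for each of $y$ and $z$ (Lemma~\ref{lem: gd_decrease} plus the variance bounds), drift terms controlled by the Lipschitzness of $y^*$ and $z^*$ together with $x^{k+1}-x^k=\alpha_k(x_+^k-x^k)$, absorption of cross terms via Young weights $\sim 1/(\beta_k\mu_g)$ and $1/(\gamma_k\mu_g)$, telescoping, and plugging the $y$-bound into the $z$-bound; the only cosmetic difference is that you route the $z$-bias through the intermediate minimizer $\hat z^k=(\nabla_{22}^2 g(x^k,y^k))^{-1}\nabla_2 f(x^k,y^k)$, whereas the paper bounds the residual $\nabla_{22}^2 g(x^k,y^k)z_*^k-\nabla_2 f(x^k,y^k)$ directly using the identity $\nabla_{22}^2 g(x^k,y_*^k)z_*^k=\nabla_2 f(x^k,y_*^k)$. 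One bookkeeping nit: your looser contraction $(1-\tfrac12\beta_k\mu_g)$ yields $C_{y,0}=\tfrac{2}{c_1\mu_g}\E[\norm{y^0-y_*^0}^2]$ (twice the stated constant), which the paper avoids by keeping the full $(1-\beta_k\mu_g)$ factor from $(1+\beta_k\mu_g)(1-\beta_k\mu_g)^2\le 1-\beta_k\mu_g$.
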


\begin{proof}
    We first consider the error induced by $y^k$. We have
\begin{equation}\label{ineq: y_decrease_1_simple}
    \begin{aligned}
        \norm{y^{k+1} - y_*^{k+1}}^2 &\leq \left(1 + \beta_k\mu_g\right)\norm{y^{k+1} - y_*^k}^2 + \left(1 + \frac{1}{\beta_k\mu_g}\right)\norm{y_*^{k+1} - y_*^k}^2 \\
        &\leq \left(1 + \beta_k\mu_g\right)\norm{y^{k+1} - y_*^k}^2 + \left(\frac{\alpha_k^2}{\beta_k\mu_g} + \alpha_k^2\right)L_{y^*}^2\norm{x_+^k - x^k}^2,
    \end{aligned}
\end{equation}
where the first inequality uses Cauchy-Schwarz inequality:
\[
    \norm{u + v}^2 \leq (1 + c)\left(\norm{u}^2 + \frac{1}{c}\norm{v}^2\right), \text{ for any vectors } u, v \text{ and constant }c>0.
\]
Thanks to the moving average step of $x^k$, our analysis of $\norm{y_*^{k+1} - y_*^k}$ is simplified comparing to that in \cite{chen2021closing}. We also have
\begin{align}\label{ineq: y_decrease_2_simple}
    \begin{aligned}
        \E\left[\norm{y^{k+1} - y_*^k}^2\middle|\setF_k\right] = &\E\left[\norm{y^k - \beta_k\nabla_2g(x^k, y^k) - y_*^k - \beta_k(v^{k+1} - \nabla_2g(x^k, y^k))}^2\middle|\setF_k\right] \\
        \leq &\norm{y^k - \beta_k\nabla_2g(x^k, y^k) - y_*^k}^2 + \beta_k^2\sigma_{g,1}^2\\
        \leq &(1-\beta_k\mu_g)^2\norm{y^k - y_*^k}^2 + \beta_k^2\sigma_{g,1}^2
    \end{aligned}
\end{align}
where the first inequality uses Assumption \eqref{aspt: stochastic_grad} and Lemma \ref{lem: gd_decrease}, and the second inequality uses Lemma \ref{lem: gd_decrease} (which requires strong convexity of $g$, Lipschtiz continuity of $\nabla_2g$, and the first inequality in \eqref{ineq: beta_gamma_condition_simple}). Combining \eqref{ineq: y_decrease_1_simple} and \eqref{ineq: y_decrease_2_simple}, we know
\begin{align}\label{ineq: y_decrease_3_simple}
    \begin{aligned}
        &\E\left[\norm{y^{k+1} - y_*^{k+1}}^2\middle|\setF_k\right]\\
        \leq &\left(1 + \beta_k\mu_g\right)(1-\beta_k\mu_g)^2\norm{y^k - y_*^k}^2 + \left(\frac{\alpha_k^2}{\beta_k\mu_g} + \alpha_k^2\right)L_{y^*}^2\norm{x_+^k - x^k}^2 + \left(1 + \beta_k\mu_g\right)\beta_k^2\sigma_{g,1}^2 \\
        \leq & (1- \beta_k\mu_g)\norm{y^k - y_*^k}^2 + \frac{2\alpha_k^2L_{y^*}^2}{\beta_k\mu_g}\norm{x_+^k - x^k}^2 + 2\beta_k^2\sigma_{g,1}^2.
    \end{aligned}
\end{align}
where the second inequality uses $\beta_k<\frac{2}{\mu_g + L_{\nabla g}}\leq \frac{1}{\mu_g}$. Taking summation ($k$ from $0$ to $K$) on both sides and taking expectation, we know
\begin{align*}
    \sum_{k=0}^{K}\beta_k\mu_g\E\left[\norm{y^k - y_*^k}^2\right]\leq\E\left[\norm{y^0 - y_*^0}^2\right] + \sum_{k=0}^{K}\frac{2\alpha_k^2L_{y^*}^2}{\beta_k\mu_g}\E\left[\norm{x_+^k - x^k}^2\right] + \sum_{k=0}^{K}2\beta_k^2\sigma_{g,1}^2,
\end{align*}
which proves the first inequality in \eqref{ineq: yz_decrease_simple} by dividing $c_1\mu_g$ on both sides. 
Next we analyze the error induced by $z^k$. Our analysis is substantially different from \cite{dagreou2022framework}. We first notice that
\begin{align}\label{ineq: z_decrease_1_simple}
    \begin{aligned}
        \norm{z^{k+1} - z_*^{k+1}}^2 &\leq \left(1 + \frac{\gamma_k\mu_g}{3}\right)\norm{z^{k+1} - z_*^k}^2 + \left(1 + \frac{3}{\gamma_k\mu_g}\right)\norm{z_*^{k+1} - z_*^k}^2 \\
    &\leq \left(1 + \frac{\gamma_k\mu_g}{3}\right)\norm{z^{k+1} - z_*^k}^2 + \left(\frac{3\alpha_k^2}{\gamma_k\mu_g} + \alpha_k^2 \right)L_{z^*}^2\norm{x_+^k - x^k}^2 
    \end{aligned}
\end{align}
where we use Cauchy-Schwarz inequality in the first and second inequality, we use the facts that $\nabla y^*$ is Lipschitz continuous. 
For $\norm{z^{k+1} - z_*^k}$, we may follow the analysis of SGD under the strongly convex setting:
\begin{align*}
    z^{k+1} - z_*^k = z^k - \gamma_k(H^kz^k - u_{y}^k) & - z_*^k= z^k - \gamma_k\nabla_{22}^2g(x^k, y^k)z^k + \gamma_k\nabla_2 f(x^k, y^k) - z_*^k\\
    &- \gamma_k(H^{k+1} - \nabla_{22}^2g(x^k, y^k))z^k + \gamma_k(u_{y}^k - \nabla_2 f(x^k, y^k))
\end{align*}
which gives
\begin{align}
    &\E\left[\norm{z^{k+1} - z_*^k}^2\middle|\setF_k\right] \notag\\
    \leq &\norm{z^k - \gamma_k\nabla_{22}^2g(x^k, y^k)z^k + \gamma_k\nabla_2 f(x^k, y^k) - z_*^k}^2 + \gamma_k^2\sigma_{g,2}^2\norm{z^k}^2 + \gamma_k^2\sigma_{f,1}^2 \notag\\
    = & \norm{(I - \gamma_k\nabla_{22}^2g(x^k, y^k))(z^k - z_*^k) - \gamma_k(\nabla_{22}^2g(x^k,y^k)z_*^k - \nabla_2 f(x^k, y^k))}^2 + \gamma_k^2\sigma_{g,2}^2\norm{z^k}^2 + \gamma_k^2\sigma_{f,1}^2 \notag\\
    \leq & \left(1 + \frac{\gamma_k\mu_g}{2}\right)\norm{(I - \gamma_k\nabla_{22}^2g(x^k, y^k))(z^k - z_*^k)}^2 \notag\\
    &+ \left(1 + \frac{2}{\gamma_k\mu_g}\right)\norm{\gamma_k\left(\nabla_{22}^2g(x^k,y^k)z_*^k - \nabla_{22}^2g(x^k, y_*^k)z_*^k + \nabla_2 f(x^k, y_*^k) - \nabla_2 f(x^k, y^k)\right)}^2 \notag\\
     & + 2\gamma_k^2\sigma_{g,2}^2\left(\norm{z^k - z_*^k}^2 + \norm{z_*^k}^2\right)+ \gamma_k^2\sigma_{f,1}^2 \notag\\
    \leq & \left(\left(1 + \frac{\gamma_k\mu_g}{2}\right)(1-\gamma_k\mu_g)^2 + 2\gamma_k^2\sigma_{g,2}^2\right)\norm{z^k-z_*^k}^2 \notag\\
     & + \left(\frac{4\gamma_k}{\mu_g} + 2\gamma_k^2\right)\left(L_{\nabla_{22}^2g}^2\norm{z_*^k}^2 + L_{\nabla_2 f}^2\right)\norm{y^k - y_*^k}^2 + 2\gamma_k^2\sigma_{g,2}^2\norm{z_*^k}^2 + \gamma_k^2\sigma_{f,1}^2. \notag\\
    \leq & \left(1 - \frac{4\gamma_k\mu_g}{3}\right)\norm{z^k-z_*^k}^2 + \left(\frac{4\gamma_k}{\mu_g} + 2\gamma_k^2\right)\left(\frac{L_{\nabla_{22}^2g}^2L_f^2}{\mu_g^2} + L_f ^2\right)\norm{y^k - y_*^k}^2 + \left(\frac{2\sigma_{g,2}^2L_f^2}{\mu_g^2} + \sigma_{f,1}^2\right)\gamma_k^2, \label{ineq: z_decrease_2_simple}
\end{align}
where the first inequality uses Assumption \ref{aspt: stochastic_grad}, the second inequality uses Cauchy-Schwarz inequality and the definition of $z_*^k$, the third inequality uses Cauchy-Schwarz inequality and the fact that $g$ is $\mu_g$-strongly convex, and the fourth inequality uses Cauchy-Schwarz inequality, \eqref{ineq: z_star_bound} and 
\[
    -\frac{\gamma_k\mu_g}{6} + 2\gamma_k^2\sigma_{g,2}^2 + \frac{\gamma_k^3\mu_g^3}{2}\leq 0,
\]
which is a direct result from the bound of $\gamma_k$ in \eqref{ineq: beta_gamma_condition_simple}. It is worth noting that our estimation can be viewed as a refined version of (72) - (75) in \cite{dagreou2022framework}
Combining \eqref{ineq: z_decrease_1_simple} and \eqref{ineq: z_decrease_2_simple} we may obtain
{\small
\begin{align*}
    &\E\left[\norm{z^{k+1} - z_*^{k+1}}^2\middle|\setF_k\right] \\
    \leq &\left(1 + \frac{\gamma_k\mu_g}{3}\right)\E\left[\norm{z^{k+1} - z_*^k}^2\middle|\setF_k\right] + \left(\frac{3\alpha_k^2}{\gamma_k\mu_g} + \alpha_k^2 \right)L_{z^*}^2\norm{x_+^k - x^k}^2\\
    \leq &\left(1 + \frac{\gamma_k\mu_g}{3}\right)\left[\left(1 - \frac{4\gamma_k\mu_g}{3}\right)\norm{z^k-z_*^k}^2 + \left(\frac{4\gamma_k}{\mu_g} + 2\gamma_k^2\right)\left(\frac{L_{\nabla_{22}^2g}^2L_f^2}{\mu_g^2} + L_f^2\right)\norm{y^k - y_*^k}^2\right] \\
     & + \left(1 + \frac{\gamma_k\mu_g}{3}\right)\left(\frac{2\sigma_{g,2}^2L_f^2}{\mu_g^2} + \sigma_{f,1}^2\right)\gamma_k^2 + \left(\frac{3\alpha_k^2}{\gamma_k\mu_g} + \alpha_k^2 \right)L_{z^*}^2\norm{x_+^k - x^k}^2 \\
    = &\left(1 - \gamma_k\mu_g\right)\norm{z^k-z_*^k}^2 + \left(\frac{4\gamma_k}{\mu_g} + \frac{10\gamma_k^2}{3} + \frac{2\gamma_k^3\mu_g}{3}\right)\left(\frac{L_{\nabla_{22}^2g}^2L_f^2}{\mu_g^2} + L_f^2\right)\norm{y^k - y_*^k}^2 \\
     & + \sigma_w^2\left(\gamma_k^2  + \frac{\gamma_k^3\mu_g}{3}\right) + \left(\frac{3\alpha_k^2}{\gamma_k\mu_g} + \alpha_k^2 \right)L_{z^*}^2\norm{x_+^k - x^k}^2 \\
    \leq &\left(1 - \gamma_k\mu_g\right)\norm{z^k-z_*^k}^2 + \frac{5\gamma_kL_f^2}{\mu_g}\left(\frac{L_{\nabla_{22}^2g}^2}{\mu_g^2} + 1\right)\norm{y^k - y_*^k}^2 + 2\sigma_w^2\gamma_k^2 + \frac{4\alpha_k^2L_{z^*}^2}{\gamma_k\mu_g}\norm{x_+^k - x^k}^2,
\end{align*}
}%
where the equality uses the definition of $\sigma_w^2$ in \eqref{ineq: w_var_simple} and the third inequality uses $\gamma_k\mu_g \leq \frac{1}{4}$. Taking summation ($k$ from $0$ to $K$) and expectation, we know 
\begin{align*}
    \sum_{k=0}^{K}\gamma_k\mu_g\E\left[\norm{z^k-z_*^k}^2\right] \leq &\E\left[\norm{z^0 - z_*^0}^2\right]+ \sum_{k=0}^{K}\frac{5\gamma_kL_f^2}{\mu_g}\left(\frac{L_{\nabla_{22}^2g}^2}{\mu_g^2} + 1\right) \E\left[\norm{y^k - y_*^k}^2\right] \\
        + &\sum_{k=0}^{K}2\sigma_w^2\gamma_k^2 + \sum_{k=0}^{K}\frac{4\alpha_k^2L_{z^*}^2}{\gamma_k\mu_g}\E\left[\norm{x_+^k - x^k}^2\right].
\end{align*}
This completes the proof of the second inequality in \eqref{ineq: yz_decrease_simple} by dividing $c_2\mu_g$ on both sides and replacing $\sum_{k=0}^{K}\alpha_k\E\left[\norm{y^k - y_*^k}^2\right]$ with its upper bound in \eqref{ineq: yz_decrease_simple}.
\end{proof}

\begin{lemma}\label{lem: hypergrad_error_simple}
    Suppose Assumptions \ref{aspt: smoothness} and \ref{aspt: stochastic_grad} hold. We have
    \begin{align*}
        \norm{\E\left[w^{k+1}\middle|\setF_k\right] - \nabla\Phi(x^k)}^2\leq &3 \left(\left(L_{\nabla f}^2 + L_{\nabla^2 g}^2\right)\norm{y^k - y_*^k}^2 + L_{\nabla g}^2\norm{z^k - z_*^k}^2\right),
    \end{align*}
\end{lemma}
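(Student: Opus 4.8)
The plan is to expand both vectors explicitly and then split their difference into three pieces, each governed by one Lipschitz constant from Assumption~\ref{aspt: smoothness}. By the unbiasedness in Assumption~\ref{aspt: stochastic_grad} and the definition $w^{k+1} = u_x^{k+1} - J^{k+1}z^k$ (with $z^k$ being $\setF_k$-measurable), we have $\E[w^{k+1}\mid\setF_k] = \nabla_1 f(x^k, y^k) - \nabla_{12}^2 g(x^k, y^k)z^k$, whereas Lemma~\ref{lem: hypergrad} gives $\nabla\Phi(x^k) = \nabla_1 f(x^k, y_*^k) - \nabla_{12}^2 g(x^k, y_*^k)z_*^k$. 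Subtracting and inserting the cross term $\nabla_{12}^2 g(x^k, y^k)z_*^k$, the estimation error decomposes as $\E[w^{k+1}\mid\setF_k] - \nabla\Phi(x^k) = A - B - C$ with
\[
A = \nabla_1 f(x^k, y^k) - \nabla_1 f(x^k, y_*^k),\quad B = \nabla_{12}^2 g(x^k, y^k)(z^k - z_*^k),\quad C = \big(\nabla_{12}^2 g(x^k, y^k) - \nabla_{12}^2 g(x^k, y_*^k)\big)z_*^k.
\]

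Next I would apply $\norm{A - B - C}^2 \le 3\big(\norm{A}^2 + \norm{B}^2 + \norm{C}^2\big)$ and bound the three terms individually: $\norm{A}\le L_{\nabla f}\norm{y^k - y_*^k}$ by Lipschitzness of $\nabla f$; $\norm{B}\le \norm{\nabla_{12}^2 g(x^k,y^k)}\,\norm{z^k - z_*^k}\le L_{\nabla g}\norm{z^k-z_*^k}$, since a Lipschitz gradient $\nabla g$ bounds the operator norm of the Hessian (hence of its $(1,2)$-block) by $L_{\nabla g}$; and $\norm{C}\le L_{\nabla^2 g}\norm{y^k - y_*^k}\,\norm{z_*^k}$ by Lipschitzness of $\nabla^2 g$, where $\norm{z_*^k}\le L_f/\mu_g$ by~\eqref{ineq: z_star_bound}. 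Grouping the $y$-dependent terms ($A$ and $C$) and the $z$-dependent term ($B$) then yields an inequality of the stated form, the coefficient multiplying $\norm{y^k-y_*^k}^2$ absorbing the bound on $\norm{z_*^k}$.

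This is a short, essentially routine estimate, and I do not expect a real obstacle; the only step that uses anything beyond plain first-order Lipschitzness of $f$ is the term $C$, which is exactly where second-order smoothness of $g$ (Assumption~\ref{aspt: smoothness}(a)) and the uniform bound $\norm{z_*^k}\le L_f/\mu_g$ — itself a consequence of strong convexity of $g$ together with Assumption~\ref{aspt: smoothness}(c) — are invoked. Crucially, no Lipschitzness of $\nabla z^*(x)$ (and hence no third-order smoothness of $g$) enters the argument, in contrast to the corresponding step in \cite{dagreou2022framework}.
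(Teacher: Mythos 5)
Your argument is exactly the paper's proof: the same insertion of the cross term $\nabla_{12}^2 g(x^k,y^k)z_*^k$, the same three-term Cauchy--Schwarz split with factor $3$, and the same Lipschitz bounds on each piece from Assumption~\ref{aspt: smoothness}. If anything, your explicit tracking of $\norm{z_*^k}\leq L_f/\mu_g$ via \eqref{ineq: z_star_bound} is more careful than the paper's displayed constant, which silently absorbs that factor into the $L_{\nabla^2 g}^2$ coefficient.
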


\begin{proof}
Note that we have the following decomposition:
\begin{align}\label{eq: w_exp_decompose_simple}
    \begin{aligned}
        &\E\left[w^{k+1}\middle|\setF_k\right] - \nabla\Phi(x^k) \\
        = &\E\left[u_x^{k+1}\middle|\setF_k\right] - \nabla_1 f(x^k, y_*^k) -  \left(\E\left[J^{k+1}\middle|\setF_k\right]z^k - \nabla_{12}^2g(x^k,y_*^k)z_*^k\right) \\
        = &\nabla_1 f(x^k, y^k) - \nabla_1 f(x^k, y_*^k) -  \nabla_{12}^2g(x^k, y^k)\left( z^k - z_*^k\right) - \left(\nabla_{12}^2g(x^k, y^k) - \nabla_{12}^2g(x^k,y_*^k)\right)z_*^k.
    \end{aligned}
\end{align}
which, together with Cauchy-Schwarz inequality, implies
\begin{align*}
    &\norm{\E\left[w^{k+1}\middle|\setF_k\right] - \nabla\Phi(x^k)}^2 \\
    \leq &3\norm{\nabla_1 f(x^k, y^k) - \nabla_1 f(x^k, y_*^k)}^2 + 3\norm{\nabla_{12}^2g(x^k, y^k)\left( z^k - z_*^k\right)}^2 + 3\norm{\left(\nabla_{12}^2g(x^k, y^k) - \nabla_{12}^2g(x^k,y_*^k)\right)z_*^k}^2\\
    \leq &3 \left(\left(L_{\nabla f}^2 + L_{\nabla^2 g}^2\right)\norm{y^k - y_*^k}^2 + L_{\nabla g}^2\norm{z^k - z_*^k}^2\right).
\end{align*}
\end{proof}

\subsubsection{Primal Convergence}
\begin{lemma}\label{lem: primal_simple}
    Suppose Assumptions \ref{aspt: smoothness} and \ref{aspt: stochastic_grad} hold. If
    \begin{equation}\label{ineq: conditions_primal_simple}
        \alpha_k\leq \min\left(\frac{\tau^2}{20c_3},\ \frac{c_3}{2\tau\left(c_3L_{\nabla\Phi} + L_{\nabla\eta_{\cX}}\right)},\ 1\right),\ \tau < 1,\ c_3 \leq \frac{1}{10},
    \end{equation}
    then in Algorithm \ref{alg:ma-soba} we have
    \begin{align}\label{ineq: primal_decrease_simple}
        \begin{aligned}
            \sum_{k=0}^{K}\frac{\alpha_k}{\tau^2}\E\left[\norm{x_+^k-x^k}^2\right]\leq &\frac{2}{\tau}\E\left[W_{0,1}\right] + 3\sum_{k=0}^{K}\alpha_k\E\left[\norm{\nabla\Phi(x^k) - \E\left[w^{k+1}\middle|\setF_k\right]}^2\right] \\
            &+ \frac{1}{2}\sum_{k=0}^{K}\alpha_k\E\left[\norm{h^k - \nabla\Phi(x^k)}^2\right] + \sum_{k=0}^{K}\left(\alpha_k^2\sigma_{g,2}^2\E\left[\norm{z^k - z_*^k}^2\right] + \alpha_k^2\sigma_w^2\right),
        \end{aligned}
    \end{align}
\end{lemma}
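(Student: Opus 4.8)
The strategy is to track the one-step change of the auxiliary potential $W_{k,1}=\Phi(x^k)-\inf_{x\in\cX}\Phi(x)-\tfrac1{c_3}\eta_{\cX}(x^k,h^k,\tau)$ and then telescope. Observe first that $W_{k,1}\ge0$: indeed $\Phi(x^k)-\inf\Phi\ge0$, and since the iterates remain in $\cX$ (each $x^{k+1}$ is a convex combination of $x^k\in\cX$ and $x_+^k\in\cX$), the feasible choice $d=x^k$ in the minimization defining $\eta_{\cX}$ gives $\eta_{\cX}(x^k,h^k,\tau)\le0$; moreover $h^0=0$ makes $\eta_{\cX}(x^0,h^0,\tau)=0$, so $W_{0,1}=\Phi(x^0)-\inf\Phi$ is exactly the quantity appearing on the right of \eqref{ineq: primal_decrease_simple}. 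Hence, once we establish a one-step bound of the form $\E[W_{k,1}-W_{k+1,1}]\ge\tfrac{\alpha_k}{2\tau}\E[\|x_+^k-x^k\|^2]-\tfrac{\tau}{2}\bigl(\text{the error groups on the right of \eqref{ineq: primal_decrease_simple}}\bigr)$, summing over $k=0,\dots,K$, dropping $W_{K+1,1}\ge0$, and multiplying by $2/\tau$ yields \eqref{ineq: primal_decrease_simple}. Throughout I write $\delta_k:=x_+^k-x^k$, so that $x^{k+1}-x^k=\alpha_k\delta_k$ and $h^{k+1}-h^k=\theta_k(w^{k+1}-h^k)$.

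The one-step bound comes from combining a descent estimate for $\Phi$ with a lower bound on the increment of $\eta_{\cX}$. For the first, $L_{\nabla\Phi}$-Lipschitzness of $\nabla\Phi$ (Lemma~\ref{lem: hypergrad}) gives $\Phi(x^{k+1})\le\Phi(x^k)+\alpha_k\langle\nabla\Phi(x^k),\delta_k\rangle+\tfrac{L_{\nabla\Phi}\alpha_k^2}{2}\|\delta_k\|^2$, and I split $\langle\nabla\Phi(x^k),\delta_k\rangle=\langle h^k,\delta_k\rangle+\langle\nabla\Phi(x^k)-h^k,\delta_k\rangle$. For the second, $L_{\nabla\eta_{\cX}}$-smoothness of $\eta_{\cX}$ (Lemma~\ref{lem: eta_property}) gives $\eta_{\cX}(x^{k+1},h^{k+1},\tau)\ge\eta_{\cX}(x^k,h^k,\tau)+\langle\nabla_1\eta_{\cX}^k,\alpha_k\delta_k\rangle+\langle\nabla_2\eta_{\cX}^k,h^{k+1}-h^k\rangle-\tfrac{L_{\nabla\eta_{\cX}}}{2}(\alpha_k^2\|\delta_k\|^2+\|h^{k+1}-h^k\|^2)$, into which I plug $\nabla_1\eta_{\cX}^k=-h^k-\tfrac1\tau\delta_k$ and $\nabla_2\eta_{\cX}^k=\delta_k$. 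The crucial point is what the two $\langle h^k,\delta_k\rangle$ pieces become after forming $W_{k,1}-W_{k+1,1}$ (which weights the $\eta_{\cX}$-part by $1/c_3$): their combined coefficient multiplies $-\langle h^k,\delta_k\rangle$, and the optimality condition \eqref{ineq: eta_opt_condition} at $d=x^k$ gives $-\langle h^k,\delta_k\rangle\ge\tfrac1\tau\|\delta_k\|^2$; tracking constants,
\[
\bigl(\alpha_k+\tfrac{\alpha_k}{c_3}\bigr)\bigl(-\langle h^k,\delta_k\rangle\bigr)-\tfrac{\alpha_k}{c_3\tau}\|\delta_k\|^2\ \ge\ \tfrac{\alpha_k}{\tau}\|\delta_k\|^2 ,
\]
so a clean $\tfrac{\alpha_k}{\tau}\|\delta_k\|^2$ term survives with no residual dependence on $c_3$.

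It remains to absorb the leftover terms into a fraction of $\tfrac{\alpha_k}{\tau}\|\delta_k\|^2$ and to identify the error groups. The cross terms $\langle\nabla\Phi(x^k)-h^k,\delta_k\rangle$ and (after taking $\E[\,\cdot\mid\setF_k]$, since $\delta_k$ is $\setF_k$-measurable) $\tfrac{\theta_k}{c_3}\langle\delta_k,\E[w^{k+1}\mid\setF_k]-h^k\rangle=\alpha_k\langle\delta_k,\E[w^{k+1}\mid\setF_k]-h^k\rangle$ are split by Young's inequality with parameters of order $\tau$, so each costs at most a small multiple of $\tfrac{\alpha_k}{\tau}\|\delta_k\|^2$ while contributing $O(\alpha_k\tau)$ multiples of $\|\nabla\Phi(x^k)-h^k\|^2$ and $\|\nabla\Phi(x^k)-\E[w^{k+1}\mid\setF_k]\|^2$ (routing $\E[w^{k+1}\mid\setF_k]-h^k$ through $\nabla\Phi(x^k)$ by the triangle inequality). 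The quadratic remainder $\tfrac{L_{\nabla\eta_{\cX}}}{2c_3}\|h^{k+1}-h^k\|^2$ is bounded in expectation by Lemma~\ref{lem: variance_simple}, which (with $\theta_k=c_3\alpha_k$ and $\sigma_{w,k+1}^2=\sigma_w^2+2\sigma_{g,2}^2\E\|z^k-z_*^k\|^2$, and using $c_3\le1/10$ to tame the prefactor) produces exactly the $\alpha_k^2\sigma_{g,2}^2\|z^k-z_*^k\|^2$ and $\alpha_k^2\sigma_w^2$ terms, plus further $O(\alpha_k^2)$ multiples of $\|\nabla\Phi(x^k)-h^k\|^2$ and $\|\nabla\Phi(x^k)-\E[w^{k+1}\mid\setF_k]\|^2$. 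Finally the $O(\alpha_k^2)\|\delta_k\|^2$ remainders, which total $\tfrac{\alpha_k^2}{2c_3}(c_3L_{\nabla\Phi}+L_{\nabla\eta_{\cX}})\|\delta_k\|^2$ together with the $\|h^{k+1}-h^k\|^2$-induced piece, are driven below $\tfrac14\cdot\tfrac{\alpha_k}{\tau}\|\delta_k\|^2$ by the stepsize bounds $\alpha_k(c_3L_{\nabla\Phi}+L_{\nabla\eta_{\cX}})\le c_3/(2\tau)$ and $\alpha_k\le\tau^2/(20c_3)$ and $\tau<1$. Collecting everything yields the one-step inequality asserted in the first paragraph, and the telescoping argument there completes the proof.

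The part I expect to be the genuine work is this last round of constant calibration: one must choose the several Young parameters, the weight $1/c_3$, and the two stepsize thresholds simultaneously so that (i) the net $\|\delta_k\|^2$ coefficient stays at exactly $\tfrac{\alpha_k}{2\tau}$ (any smaller spoils the $2/\tau$ factor in front of $W_{0,1}$), and (ii) the various $\|\nabla\Phi(x^k)-h^k\|^2$ and $\|\nabla\Phi(x^k)-\E[w^{k+1}\mid\setF_k]\|^2$ contributions---which arrive with a $\tau$ power from the Young splits and an $\alpha_k$ power from the $\|h^{k+1}-h^k\|^2$ bound---collapse, after the final multiplication by $2/\tau$, to coefficients at most $\tfrac12$ and $3$ respectively, with no residual $\tau$-dependence. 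Getting these powers of $\tau$ to cancel is precisely why $\tau<1$ and the stated upper bounds on $\alpha_k$ and $c_3$ are needed; verifying $W_{k,1}\ge0$ so that the telescoped sum collapses to $W_{0,1}$ alone is the only other point, and it is immediate.
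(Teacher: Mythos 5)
Your overall architecture is the same as the paper's: lower-bound the one-step drop of $W_{k,1}$ via the $L_{\nabla\Phi}$-smoothness of $\Phi$ and the $L_{\nabla\eta_{\cX}}$-smoothness of $\eta_{\cX}$ (Lemmas \ref{lem: hypergrad} and \ref{lem: eta_property}), extract the clean $\frac{\alpha_k}{\tau}\norm{x_+^k-x^k}^2$ term from the optimality condition \eqref{ineq: eta_opt_condition} at $d=x^k$, control $\norm{h^{k+1}-h^k}^2$ by Lemma \ref{lem: variance_simple} together with the stepsize conditions, and telescope using $W_{k,1}\geq 0$. The one place where you genuinely deviate is the bookkeeping of the inner-product error terms, and that deviation is fatal to the stated constants. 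Writing $\delta_k=x_+^k-x^k$, after taking $\E[\,\cdot\mid\setF_k]$ your two cross terms are $-\alpha_k\<\nabla\Phi(x^k)-h^k,\delta_k>$ and $+\alpha_k\<\E[w^{k+1}\mid\setF_k]-h^k,\delta_k>$, whose sum is exactly $\alpha_k\<\E[w^{k+1}\mid\setF_k]-\nabla\Phi(x^k),\delta_k>$: the $h^k$ cancels identically. The paper exploits precisely this cancellation (it never splits through $h^k$; see the step leading to \eqref{ineq: primal_x_simple}), so a single Young inequality with parameter $\tau$ suffices, costing $\frac{\alpha_k}{4\tau}\norm{\delta_k}^2$ and yielding the bias coefficient $3\alpha_k$, while the only contribution to $\norm{h^k-\nabla\Phi(x^k)}^2$ comes from \eqref{ineq: h_term_simple} with coefficient $\frac{\alpha_k}{2}$.

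By instead bounding the two cross terms separately and then ``routing $\E[w^{k+1}\mid\setF_k]-h^k$ through $\nabla\Phi(x^k)$ by the triangle inequality,'' you double-count the dual error, and the calibration you defer to the end cannot be completed. Concretely, under \eqref{ineq: conditions_primal_simple} the quadratic remainder already consumes $\frac{\alpha_k}{4\tau}\norm{\delta_k}^2$, so to keep the net coefficient at $\frac{\alpha_k}{2\tau}$ your two Young spends $b_1\frac{\alpha_k}{\tau}\norm{\delta_k}^2$ and $b_2\frac{\alpha_k}{\tau}\norm{\delta_k}^2$ must satisfy $b_1+b_2\leq\frac14$; after the final multiplication by $2/\tau$ the induced coefficient on $\alpha_k\norm{h^k-\nabla\Phi(x^k)}^2$ is at least $\frac{1}{2b_1}+\frac{1}{b_2}\geq 4\left(1+\frac{1}{\sqrt{2}}\right)^2>11$, and no residual power of $\tau$ or $\alpha_k$ is available to shrink it (the hypotheses $\tau<1$, $\alpha_k\leq\tau^2/(20c_3)$, $c_3\leq\frac{1}{10}$ do not help, since these contributions scale as $\alpha_k$ times an absolute constant). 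Thus your route proves at best a weaker inequality with a coefficient well above $1$ on $\sum_k\alpha_k\E\left[\norm{h^k-\nabla\Phi(x^k)}^2\right]$, not \eqref{ineq: primal_decrease_simple}; and this constant matters downstream, because the proof of Theorem \ref{thm: soba_convergence} absorbs that sum into $\sum_k\alpha_k\E[V_k]$ and needs its coefficient strictly below $1$. The repair is one line: do not split $\<\nabla\Phi(x^k),\delta_k>$ through $h^k$; keep $w^{k+1}$ paired directly against $\nabla\Phi(x^k)$, which is exactly the paper's argument and then delivers the coefficients $3$ and $\frac12$ as stated.
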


\begin{proof}
    The $L_{\nabla \Phi}$-smoothness of $\Phi(x)$ and $L_{\nabla\eta_{\cX}}$-smoothness of $\eta_{\cX}$ in Lemma \ref{lem: hypergrad} and \ref{lem: eta_property} imply
\begin{align}\label{ineq: phi_decrease_simple}
    \begin{aligned}
        \Phi(x^{k+1}) - \Phi(x^k)\leq\alpha_k\<\nabla \Phi(x^k), x_+^k - x^k> + \frac{L_{\nabla \Phi}}{2}\norm{x^{k+1}-x^k}^2
    \end{aligned}
\end{align}
and
\begin{equation}\label{ineq: eta_x_decrease_simple}
    \begin{aligned}
        &\eta_{\cX}(x^k,h^k,\tau) - \eta_{\cX}(x^{k+1},h^{k+1},\tau)\\
        \leq & \<-h^k + \frac{1}{\tau}(x^k - x_+^k), x^k - x^{k+1}> + \<x_+^k - x^k, h^k - h^{k+1}> \\
        &+ \frac{L_{\nabla\eta_{\cX}}}{2}\left(\norm{x^{k+1}-x^k}^2 + \norm{h^{k+1} - h^k}^2\right)\\
        = &\alpha_k\<h^k, x_+^k - x^k> + \frac{\alpha_k}{\tau}\norm{x_+^k - x^k}^2 +\theta_k\<h^k, x_+^k - x^k> - \theta_k\<w^{k+1}, x_+^k - x^k> \\
         & + \frac{L_{\nabla\eta_{\cX}}}{2}\left(\norm{x^{k+1}-x^k}^2 + \norm{h^{k+1} - h^k}^2\right)\\
    \leq &- \frac{\theta_k}{\tau}\norm{x_+^k - x^k}^2 - \theta_k\<w^{k+1}, x_+^k-x^k> + \frac{L_{\nabla\eta_{\cX}}}{2}\left(\norm{x^{k+1}-x^k}^2 + \norm{h^{k+1} - h^k}^2\right),
    \end{aligned}
\end{equation}
where the first inequality uses $L_{\nabla\eta_{\cX}}$-smoothness of $\nabla \eta_{\cX}$, and the second inequality uses the optimality condition \eqref{ineq: eta_opt_condition} (with $d = x^k$). Hence by computing $\eqref{ineq: phi_decrease_simple} + \eqref{ineq: eta_x_decrease_simple}/ c_3$ and taking conditional expectation with respect to $\setF_k$ we know
\begin{align}
    &\frac{\alpha_k}{\tau}\norm{x_+^k-x^k}^2\notag \\
    \leq &\frac{1}{c_3}\left(\E\left[\eta_{\cX}(x^{k+1},h^{k+1},\tau)\middle|\setF_k\right] - \eta_{\cX}(x^k,h^k,\tau)\right) + \Phi(x^k) - \E\left[\Phi(x^{k+1})\middle|\setF_k\right] \notag\\
     &+ \alpha_k\<\nabla\Phi(x^k) - \E\left[w^{k+1}\middle|\setF_k\right], x_+^k - x^k> + \frac{\left(c_3L_{\nabla \Phi} + L_{\nabla\eta_{\cX}}\right)}{2c_3}\norm{x^{k+1}-x^k}^2 \notag\\
     &+ \frac{L_{\nabla\eta_{\cX}}}{2c_3}\E\left[\norm{h^{k+1} - h^k}^2\middle|\setF_k\right]. \notag\\
    = & W_{k,1} - \E\left[W_{k+1,1}\middle|\setF_k\right] + \alpha_k\<\nabla\Phi(x^k) - \E\left[w^{k+1}\middle|\setF_k\right], x_+^k - x^k> \notag\\
     & + \frac{\left(c_3L_{\nabla \Phi} + L_{\nabla\eta_{\cX}}\right)}{2c_3}\norm{x^{k+1}-x^k}^2 + \frac{L_{\nabla\eta_{\cX}}}{2c_3}\E\left[\norm{h^{k+1} - h^k}^2\middle|\setF_k\right] \notag\\
    \leq & W_{k,1} - \E\left[W_{k+1,1}\middle|\setF_k\right] + \alpha_k\left(\tau\norm{\nabla\Phi(x^k) - \E\left[w^{k+1}\middle|\setF_k\right]}^2 + \frac{1}{4\tau}\norm{ x_+^k - x^k}^2\right) \notag\\
     & + \frac{\alpha_k}{4\tau}\norm{x_+^k-x^k}^2 + \frac{5}{2c_3\tau}\E\left[\norm{h^{k+1} - h^k}^2\middle|\setF_k\right], \label{ineq: primal_x_simple}
\end{align}
where the second inequality uses Young's inequality and the following inequalities:
\[
    \frac{\alpha_k^2\left(c_3L_{\nabla \Phi} + L_{\nabla\eta_{\cX}}\right)}{2c_3}\leq \frac{\alpha_k}{4\tau},\  L_{\nabla \eta_{\cX}}<\frac{5}{\tau} \text{ when }\eqref{ineq: conditions_primal_simple}\ \text{holds}.
\]
Note that by \eqref{ineq: h_var_simple} we know 
\begin{align}\label{ineq: h_term_simple}
    \begin{aligned}
        &\frac{5}{c_3\tau^2}\E\left[\norm{h^{k+1} - h^k}^2\right] \\
        \leq &\frac{10c_3\alpha_k^2}{\tau^2}\E\left[\norm{h^k - \nabla\Phi(x^k)}^2 + \norm{\E\left[w^{k+1}\middle|\setF_k\right] -\nabla\Phi(x^k)}^2\right] \\
         & + \frac{5c_3\alpha_k^2}{\tau^2}\sigma_w^2  + \frac{10c_3\alpha_k^2\sigma_{g,2}^2}{\tau^2}\E\left[\norm{z^k - z_*^k}^2\right]. \\
        \leq &\frac{\alpha_k}{2}\E\left[\norm{h^k - \nabla\Phi(x^k)}^2\right] + \alpha_k\E\left[\norm{\E\left[w^{k+1}\middle|\setF_k\right] -\nabla\Phi(x^k)}^2\right]\\
        & + \alpha_k^2\sigma_w^2 + \alpha_k^2\sigma_{g,2}^2\E\left[\norm{z^k - z_*^k}^2\right],
    \end{aligned}
\end{align}
where the second inequality uses \eqref{ineq: conditions_primal_simple}. Taking summation and expectation on both sides of \eqref{ineq: primal_x_simple} and using \eqref{ineq: h_term_simple}, we obtain \eqref{ineq: primal_decrease_simple}

\end{proof}

\subsubsection{Dual Convergence}
\begin{lemma}\label{lem: x_dual_simple}
    Suppose Assumptions \ref{aspt: smoothness} and \ref{aspt: stochastic_grad} hold. In Algorithm \ref{alg:ma-soba} we have
    \begin{align}\label{ineq: dual_decrease_simple}
        \begin{aligned}
        \sum_{k=0}^{K}\alpha_k\E\left[\norm{h^k - \nabla\Phi(x^k)}^2\right]\leq &\frac{1}{c_3}\E\left[\norm{h^0 - \nabla\Phi(x^0)}^2\right] + 2\sum_{k=0}^{K}\alpha_k\E\left[\norm{\E\left[w^{k+1}\middle|\setF_k\right] - \nabla\Phi(x^k)}^2\right] \\
        & + \frac{2L_{\nabla \Phi}^2}{c_3^2}\sum_{k=0}^{K}\alpha_k\E\left[\norm{x_+^k - x^k}^2\right] + 2c_3\sigma_{g,2}^2\sum_{k=0}^{K}\alpha_k^2\E\left[\norm{z^k - z_*^k}^2\right] + \sum_{k=0}^{K}c_3\alpha_k^2\sigma_w^2.
        \end{aligned}
    \end{align}
\end{lemma}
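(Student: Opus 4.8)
The plan is to derive a one-step recursion for the dual error $e_k := h^k - \nabla\Phi(x^k)$ and then telescope. From the moving-average update $h^{k+1} = (1-\theta_k)h^k + \theta_k w^{k+1}$ in line~5 of Algorithm~\ref{alg:ma-soba}, inserting $\pm\nabla\Phi(x^k)$ and $\pm\E[w^{k+1}\mid\setF_k]$ and using $(1-\theta_k)+\theta_k=1$ gives
\[
    e_{k+1} = \underbrace{(1-\theta_k)e_k + \theta_k b_k + r_k}_{=:A_k} + \theta_k\bigl(w^{k+1} - \E[w^{k+1}\mid\setF_k]\bigr),
\]
where $b_k := \E[w^{k+1}\mid\setF_k] - \nabla\Phi(x^k)$ and $r_k := \nabla\Phi(x^k) - \nabla\Phi(x^{k+1})$. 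The key structural point is that $x^{k+1}$ is a deterministic function of $x^k$ and $h^k$ (line~2 of Algorithm~\ref{alg:ma-soba}), so $x^{k+1}$, $\nabla\Phi(x^{k+1})$, and (by the unbiasedness in Assumption~\ref{aspt: stochastic_grad}) $\E[w^{k+1}\mid\setF_k] = \nabla_1 f(x^k,y^k) - \nabla_{12}^2 g(x^k,y^k)z^k$ are all $\setF_k$-measurable; hence $A_k\in\setF_k$ and the remaining term is a conditional martingale difference. Taking $\E[\,\cdot\mid\setF_k]$ of $\|e_{k+1}\|^2$ kills the cross term and leaves
\begin{align*}
    \E\bigl[\|e_{k+1}\|^2\bigm|\setF_k\bigr] &= \|A_k\|^2 + \theta_k^2\,\E\bigl[\|w^{k+1} - \E[w^{k+1}\mid\setF_k]\|^2\bigm|\setF_k\bigr] \\
    &\le \|A_k\|^2 + \theta_k^2\bigl(\sigma_w^2 + 2\sigma_{g,2}^2\|z^k - z_*^k\|^2\bigr),
\end{align*}
where the last step is the conditional form of the variance bound established in the proof of Lemma~\ref{lem: variance_simple}.

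To bound $\|A_k\|^2$ I would apply Young's inequality in the form $\|(1-\theta_k)e_k + v\|^2 \le (1-\theta_k)\|e_k\|^2 + \theta_k^{-1}\|v\|^2$ (valid for $\theta_k\le1$ via $2\langle e_k,v\rangle \le \theta_k\|e_k\|^2 + \theta_k^{-1}\|v\|^2$ and $(1-\theta_k)^2+(1-\theta_k)\theta_k = 1-\theta_k$), with $v = \theta_k b_k + r_k$ so that $\theta_k^{-1}\|v\|^2 \le 2\theta_k\|b_k\|^2 + 2\theta_k^{-1}\|r_k\|^2$. Combining with the $L_{\nabla\Phi}$-smoothness of $\Phi$ from Lemma~\ref{lem: hypergrad} and $x^{k+1}-x^k = \alpha_k(x_+^k - x^k)$, which gives $\|r_k\|^2 \le L_{\nabla\Phi}^2\alpha_k^2\|x_+^k - x^k\|^2$, and using $\theta_k = c_3\alpha_k$, yields
\begin{align*}
    \E\bigl[\|e_{k+1}\|^2\bigm|\setF_k\bigr] &\le (1-\theta_k)\|e_k\|^2 + 2\theta_k\|b_k\|^2 + \frac{2L_{\nabla\Phi}^2\alpha_k}{c_3}\|x_+^k - x^k\|^2 \\
    &\quad + 2\theta_k^2\sigma_{g,2}^2\|z^k-z_*^k\|^2 + \theta_k^2\sigma_w^2.
\end{align*}
Rearranging into $\theta_k\|e_k\|^2 \le \|e_k\|^2 - \E[\|e_{k+1}\|^2\mid\setF_k] + (\text{remainder})_k$, taking total expectations, summing over $k=0,\dots,K$ so the leading differences telescope (and dropping $-\E[\|e_{K+1}\|^2]\le0$), and finally dividing by $c_3$ (using $\theta_k=c_3\alpha_k$, so the left side becomes $c_3\sum_k\alpha_k\E[\|e_k\|^2]$) produces exactly \eqref{ineq: dual_decrease_simple}; the requirement $\theta_k\le1$ needed for $1-\theta_k\ge0$ follows from $c_3\le\tfrac1{10}$ and $\alpha_k\le1$.

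The computation is essentially routine; the one point deserving care is the drift term $r_k = \nabla\Phi(x^k)-\nabla\Phi(x^{k+1})$. Since $\|r_k\|$ scales like $\alpha_k\|x_+^k-x^k\|$ rather than $\alpha_k^2$, it is only first-order small and cannot be treated as higher-order error; it must instead be paired against $\theta_k^{-1}$ in Young's inequality, and this is precisely what turns $\alpha_k^2/\theta_k = \alpha_k/c_3$ into the coefficient $2L_{\nabla\Phi}^2/c_3^2$ appearing in \eqref{ineq: dual_decrease_simple} after the final division by $c_3$. This step uses only $L_{\nabla\Phi}$-smoothness of $\Phi$ (Lemma~\ref{lem: hypergrad}) and no higher-order regularity of $f,g$; the leftover bias sum $\sum_k\alpha_k\E[\|\E[w^{k+1}\mid\setF_k]-\nabla\Phi(x^k)\|^2]$ on the right-hand side will be controlled later, in the proof of Theorem~\ref{thm: soba_convergence}, via Lemma~\ref{lem: hypergrad_error_simple}.
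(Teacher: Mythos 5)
Your proposal is correct and follows essentially the same route as the paper's proof: the same decomposition of $h^{k+1}-\nabla\Phi(x^{k+1})$ into a contraction term, the bias $b_k$, the drift $r_k$ handled via $L_{\nabla\Phi}$-Lipschitzness with the $1/\theta_k$ weighting, and the conditional martingale term bounded by the variance estimate from Lemma \ref{lem: variance_simple}, followed by telescoping and dividing by $c_3$. The only cosmetic difference is that you split the deterministic part via Young's inequality while the paper uses convexity of $\norm{\cdot}^2$ with weights $(1-\theta_k,\theta_k)$; both yield the identical intermediate bound, and the implicit requirement $\theta_k\le 1$ is already guaranteed by the algorithm's stipulation $\theta_k\in(0,1]$.
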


\begin{proof}
    Note that by moving average update of $h^k$, we have
\begin{align}
    \begin{aligned}
        &h^{k+1} - \nabla\Phi(x^{k+1}) \\
        = &(1 - \theta_k)h^k + \theta_k(w^{k+1} - \E\left[w^{k+1}\middle|\setF_k\right]) + \theta_k\E\left[w^{k+1}\middle|\setF_k\right] - \nabla\Phi(x^{k+1})\\
        = &(1 - \theta_k)(h^k - \nabla\Phi(x^k)) + \theta_k(\E\left[w^{k+1}\middle|\setF_k\right] - \nabla\Phi(x^k)) + \nabla\Phi(x^k) - \nabla\Phi(x^{k+1}) \\
        & +  \theta_k(w^{k+1} - \E\left[w^{k+1}\middle|\setF_k\right])
    \end{aligned}
\end{align}
Hence we know 
\begin{align}
        &\E\left[\norm{h^{k+1} - \nabla\Phi(x^{k+1})}^2\middle|\setF_k\right] \notag\\
        = &\norm{(1 - \theta_k)(h^k - \nabla\Phi(x^k)) + \theta_k(\E\left[w^{k+1}\middle|\setF_k\right] - \nabla\Phi(x^k)) + \nabla\Phi(x^k) - \nabla\Phi(x^{k+1})}^2 \notag\\
         &+ \theta_k^2\E\left[\norm{w^{k+1} - \E\left[w^{k+1}\middle|\setF_k\right]}^2\middle|\setF_k\right] \notag\\ 
        \leq &(1 - \theta_k)\norm{h^k - \nabla\Phi(x^k)}^2 \notag\\
        & + \theta_k\norm{\E\left[w^{k+1}\middle|\setF_k\right] - \nabla\Phi(x^k) + \frac{1}{\theta_k}(\nabla\Phi(x^k) - \nabla\Phi(x^{k+1}))}^2 +\theta_k^2\sigma_{w,k+1}^2 \notag\\
        \leq &(1 - \theta_k)\norm{h^k - \nabla\Phi(x^k)}^2 \notag\\
        & + 2\theta_k\norm{\E\left[w^{k+1}\middle|\setF_k\right] - \nabla\Phi(x^k)}^2 + \frac{2}{\theta_k}\norm{\nabla\Phi(x^k) - \nabla\Phi(x^{k+1})}^2 + \theta_k^2\sigma_{w,k+1}^2 \notag\\
        \leq &(1 - \theta_k)\norm{h^k - \nabla\Phi(x^k)}^2 \notag\\
        & + 2\theta_k\norm{\E\left[w^{k+1}\middle|\setF_k\right] - \nabla\Phi(x^k)}^2 + \frac{2\alpha_k^2L_{\nabla \Phi}^2}{\theta_k}\norm{x_+^k - x^k}^2 + \theta_k^2\sigma_{w,k+1}^2,\label{ineq: h_decrease_simple}
\end{align}
where the first equality uses the fact that $x^k, h^k, x^{k+1}, $ are all $\setF_k$-measurable and are independent of $w^{k+1}$ given $\setF_k$, the first inequality uses the convexity of $\norm{\cdot}^2$ and \eqref{ineq: w_var_simple}, the second inequality uses Cauchy-Schwarz inequality, the third inequality uses the Lipschitz continuity of $\nabla\Phi$ in Lemma \ref{lem: smoothness}, and the update rules of $x^{k+1}$. Taking summation, expectation on both sides of \eqref{ineq: h_decrease_simple}, dividing $c_3$ and using \eqref{ineq: w_var_simple}, we know \eqref{ineq: dual_decrease_simple} holds.
\end{proof}

\subsubsection{Proof of Theorem \ref{thm: soba_convergence}}
Now we are ready to prove Theorem \ref{thm: soba_convergence}. From Lemma \ref{lem: x_convergence} we know it suffices to bound $V_k$. By definition of $V_k$ in \eqref{eq: V_k_masoba}, \eqref{ineq: primal_decrease_simple} and \eqref{ineq: dual_decrease_simple} we have
\begin{align*}
    &\sum_{k=0}^{K}\alpha_k\E\left[V_k\right] = \sum_{k=0}^{K}\left(\frac{\alpha_k}{\tau^2}\E\left[\norm{x_+^k-x^k}^2\right] + \alpha_k\E\left[\norm{h^k - \nabla\Phi(x^k)}^2\right]\right) \\
        \leq &\frac{2L_{\nabla \Phi}^2}{c_3^2}\sum_{k=0}^{K}\alpha_k\E\left[\norm{x_+^k - x^k}^2\right]  +\frac{1}{2}\sum_{k=0}^{K}\alpha_k\E\left[\norm{h^k - \nabla\Phi(x^k)}^2\right] \\
        & + 5\sum_{k=0}^{K}\alpha_k\E\left[\norm{\nabla\Phi(x^k) - \E\left[w^{k+1}\middle|\setF_k\right]}^2\right] +  (1 + 2c_3)\sigma_{g,2}^2\sum_{k=0}^{K}\alpha_k^2\E\left[\norm{z^k - z_*^k}^2\right]\\
        &+ \frac{2}{\tau}\E\left[W_{0,1}\right] + \frac{1}{c_3}\E\left[\norm{h^0 - \nabla\Phi(x^0)}^2\right] + (1+c_3)\sigma_w^2\left(\sum_{k=0}^{K}\alpha_k^2\right), \\
        \leq &\frac{2L_{\nabla \Phi}^2}{c_3^2}\sum_{k=0}^{K}\alpha_k\E\left[\norm{x_+^k - x^k}^2\right]  +\frac{1}{2}\sum_{k=0}^{K}\alpha_k\E\left[\norm{h^k - \nabla\Phi(x^k)}^2\right]\\
         & + 15\sum_{k=0}^{K}\alpha_k\E\left[\left(L_{\nabla f}^2 + L_{\nabla^2 g}^2\right)\norm{y^k - y_*^k}^2 + L_{\nabla g}^2\norm{z^k - z_*^k}^2\right]  + L_{\nabla g}^2\sum_{k=0}^{K}\alpha_k\E\left[\norm{z^k - z_*^k}^2\right] \\
         &+ \frac{2}{\tau}\E\left[W_{0,1}\right] + \frac{1}{c_3}\E\left[\norm{h^0 - \nabla\Phi(x^0)}^2\right] + (1+c_3)\sigma_w^2\left(\sum_{k=0}^{K}\alpha_k^2\right) \\
            \leq & C_{vx}\sum_{k=0}^{K}\alpha_k \E\left[\norm{x_+^k - x^k}^2\right] + C_{vh}\sum_{k=0}^{K}\alpha_k\E\left[\norm{h^k - \nabla\Phi(x^k)}^2\right] + C_{v,0} + C_{v,1}\left(\sum_{k=0}^{K}\alpha_k^2\right),
\end{align*}
where we assume
\begin{equation}\label{ineq: alpha_condition_simple}
    (1 + 2c_3)\sigma_{g,2}^2\alpha_k\leq L_{\nabla g}^2,\ 
\end{equation}
in the second inequality. The constants are defined as
\begin{align}
    \begin{aligned}
        &C_{vx} = 15\left(L_{\nabla f}^2 + L_{\nabla^2 g}^2\right)C_{yx} + 16L_{\nabla g}^2C_{zx} + \frac{2L_{\nabla \Phi}^2}{c_3^2} ,\ C_{vh} = \frac{1}{2}, \\
        &C_{v,0} = 15\left(L_{\nabla f}^2 + L_{\nabla^2 g}^2\right)C_{y,0} + 16L_{\nabla g}^2C_{z,0} + \frac{2}{\tau}\E\left[W_{0,1}\right] + \frac{1}{c_3}\E\left[\norm{h^0 - \nabla\Phi(x^0)}^2\right],\\
        &C_{v,1} = 15\left(L_{\nabla f}^2 + L_{\nabla^2 g}^2\right)C_{y,1} + 16L_{\nabla g}^2C_{z,1} + (1+c_3)\sigma_w^2.
    \end{aligned}
\end{align}
Using constants defined in Lemma \ref{lem: yz_error}, we know
\begin{align*}
    &C_{vx}= \cO\left(\frac{\kappa^8}{c_1^2} + \frac{\kappa^4}{c_2^2} + \frac{\kappa^6}{c_3^2}\right),\ C_{vh} = \cO(1),\\
    &C_{v,0} = \cO\left(\frac{\kappa^5}{c_1} + \frac{\kappa^2}{c_2} + \frac{1}{\tau}\right),\ C_{v,1} = \cO\left(c_1\kappa^5 + c_2\kappa^2\right).
\end{align*}
Hence we can pick 
\[
    \alpha_k \equiv \Theta\left(\frac{1}{\sqrt{K}}\right),\ \tau = \Theta\left(\kappa^{-4}\right),\ c_1 = \cO(1),\ c_2 = \cO(1),\ c_3 = \cO(1)
\]
so that the conditions (\eqref{ineq: beta_gamma_condition_simple}, \eqref{ineq: conditions_primal_simple} and \eqref{ineq: alpha_condition_simple}) in previous lemmas hold. Then we have
\[
    \frac{1}{K}\sum_{k=0}^{K}\E\left[V_k\right] = \cO\left(\frac{\kappa^5}{\sqrt{K}}\right).
\]
which, together with Lemma \ref{lem: x_convergence}, proves Theorem \ref{thm: soba_convergence}.

\subsection{Proof of Theorem \ref{thm: momasoba_convergence}}\label{sec: thm2proof}
In this section we present our proof of Theorem \ref{thm: momasoba_convergence}. For simplicity, we summarize the notations that will be used in our proof as follows.
\begin{align}\label{append: notations}
    \begin{aligned}
    &L_{\nabla f} = \max_{1\leq i\leq n}L_{\nabla f_i},\ L_{\nabla g} = \max_{1\leq i\leq n}L_{\nabla g_i},\ L_{\nabla^2 g_i} = \max_{1\leq i\leq n}L_{\nabla^2 g_i},\ \mu_g = \max_{1\leq i\leq n}\mu_{g_i}, \\
        &\kappa = \max\left(\frac{L_{\nabla g}}{\mu_g},\ \frac{L_{\nabla f}}{\mu_g}\right),\ u_x^{k+1} = \sum_{i=1}^{n}u_{x,i}^{k+1},\ w^{k+1} = \sum_{i=1}^{n}\lambda_i^k\left(u_{x,i}^{k+1} - J_i^{k+1}z_i^k\right), \\
        &\lambda_*(x) = \argmax_{\lambda \in \Delta_n}\Phi_{\mu_{\lambda}}(x,\lambda),\ \lambda_*^k = \lambda_*(x^k), \\
        &y_{*,i}^k = y_i^*(x^k) = \argmin_{y\in \realset^{d_y}} g_i(x^k, y),\ z_{*,i}^k =  \left(\nabla_{22}^2g_i(x^k, y_{*,i}^k)\right)^{-1}\nabla_2f_i(x^k, y_{*,i}^k), \\
        &\Phi_i(x) = f_i(x, y_i^*(x)),\ \Phi^k = \left(\Phi_1(x^k), ..., \Phi_n(x^k)\right)\T, \\
        &\Psi(x) = \max_{\lambda\in \Delta_n} \Phi_{\mu_{\lambda}}(x,\lambda) = \max_{\lambda\in\Delta_n}\left(\sum_{i=1}^{n}\lambda_i\Phi_i(x) -\frac{\mu_{\lambda}}{2}\norm{\lambda - \frac{\bfone_n}{n}}^2\right), \\
        &\eta_{X}(x, h, \tau) = \min_{d\in X}\left\{\<h, d - x> + \frac{1}{2\tau}\norm{d-x}^2\right\}, \text{ where } X = \cX \text{ or } \Delta_n.
    \end{aligned}
\end{align}
In this subsection we suppose Assumptions \ref{aspt: smoothness}, \ref{aspt: stochastic_grad} hold for all $f_i, g_i$ and Assumption \ref{aspt: f_bound} holds. We suppose stepsizes in Algorithm \ref{algo: momasoba} satisfy
\begin{equation}\label{eq: stepsize_ratio}
    \beta_k = c_1\alpha_k,\ \gamma_k = c_2\alpha_k, \theta_k = c_3\alpha_k,
\end{equation}
where $c_1, c_2, c_3 > 0$ are constants to be determined. We will utilize the following merit function in our analysis:
\begin{align}\label{eq: merit_tilde_W}
    \begin{aligned}
        &\tilde W_k = \tilde W_{k,1} + \tilde W_{k,2},\ \tilde W_{k,1} = \tilde W_{k,1}^{(1)} + \tilde W_{k,1}^{(2)}\\
        & \tilde W_{k,1}^{(1)} = \Psi(x^k) - \Phi_{\mu_{\lambda}}(x^k,\lambda^k) - \frac{1}{c_3}\eta_{\Delta_n}(\lambda^k,-h_{\lambda}^k,\tau_{\lambda})\\
        & \tilde W_{k,1}^{(2)} = \Psi(x^k) -\inf_{x\in\cX} \Psi(x) - \frac{1}{c_3}\eta_{\cX}(x^k,h_x^k,\tau_x) \\
        &\tilde W_{k,2} = \sum_{i=1}^{n}\left(\frac{1}{c_1}\norm{y_i^k - y_{*,i}^k}^2 + \frac{1}{c_2}\norm{z_i^k - z_{*,i}^k}^2\right).
    \end{aligned}
\end{align}
By definition of $\Psi, \eta_{\cX}, \eta_{\Delta_n}$, we can verify that $\tilde W_{k,1}^{(1)}\geq 0, \tilde W_{k,1}^{(2)}\geq 0$. Moreover, as discussed in Section \ref{sec: convergence_momasoba}, we consider the following optimality measure:
\begin{align}\label{eq: V_k_momasoba}
    \begin{aligned}
    &\tilde V_{k,1} = \frac{1}{\tau_x^2}\norm{x_+^k - x^k}^2 + \norm{h_x^k - \nabla_1\Phi_{\mu_{\lambda}}(x^k,\lambda^k)}^2,\\
        &\tilde V_{k,2} = \frac{1}{\tau_{\lambda}^2}\norm{\lambda_+^k  - \lambda^k}^2 + \norm{h_{\lambda}^k - \nabla_2\Phi_{\mu_{\lambda}}(x^k, \lambda^k)}^2, \\
        &\tilde V_k = \underbrace{\frac{1}{\tau_x^2}\norm{x_+^k - x^k}^2 + \norm{h_x^k - \nabla_1\Phi_{\mu_{\lambda}}(x^k,\lambda^k)}^2}_{\text{Optimality of min problem}} + \underbrace{\frac{1}{\tau_{\lambda}^2}\norm{\lambda_+^k  - \lambda^k}^2 + \norm{h_{\lambda}^k - \nabla_2\Phi_{\mu_{\lambda}}(x^k, \lambda^k)}^2}_{\text{Optimality of max problem}}.
    \end{aligned}
\end{align}
The following lemma provides some smoothness of functions that we will use in our proof.
\begin{lemma}\label{lem: smoothness}
    Functions $\nabla\Psi(\cdot), \nabla_1\Phi_{\mu_{\lambda}}(\cdot, \lambda), \nabla_1\Phi(\cdot, \lambda), \nabla_1\Phi_{\mu_{\lambda}}(x,\cdot), \nabla_1\Phi(x,\cdot), \nabla_2\Phi_{\mu_{\lambda}}(\cdot, \lambda), \\ \nabla_2\Phi_{\mu_{\lambda}}(x, \cdot)$ are $L_{\nabla \Psi}, L_{\nabla\Phi}, L_{\nabla\Phi}, L_{\nabla_1\Phi_{\mu_{\lambda}}}, L_{\nabla_1\Phi_{\mu_{\lambda}}}, L_{\nabla_2\Phi_{\mu_{\lambda}}}, \mu_{\lambda}$-Lipschitz continuous respectively, with the constants given by
    \[
        L_{\nabla \Psi} = \frac{n}{\mu_{\lambda}}\left(L_{\Phi}^2 + b_{\Phi}L_{\nabla\Phi}\right) + L_{\nabla\Phi},\ L_{\nabla_1\Phi_{\mu_{\lambda}}} = L_{\nabla_2\Phi_{\mu_{\lambda}}} = \sqrt{n}L_{\Phi}
    \]
\end{lemma}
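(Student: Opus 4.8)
The plan is to exploit the explicit bilinear-plus-quadratic structure of $\Phi_{\mu_{\lambda}}$ together with the per-objective regularity furnished by Lemma \ref{lem: hypergrad} (applied to each $\Phi_i$) and Assumption \ref{aspt: f_bound}. Writing $\Phi(x,\lambda)=\sum_{i=1}^n\lambda_i\Phi_i(x)$ and $\Phi_{\mu_{\lambda}}(x,\lambda)=\Phi(x,\lambda)-\tfrac{\mu_{\lambda}}{2}\|\lambda-\bfone_n/n\|^2$, the partial gradients have the closed forms
\[
\nabla_1\Phi_{\mu_{\lambda}}(x,\lambda)=\nabla_1\Phi(x,\lambda)=\sum_{i=1}^n\lambda_i\nabla\Phi_i(x),\qquad \nabla_2\Phi_{\mu_{\lambda}}(x,\lambda)=\big(\Phi_1(x),\dots,\Phi_n(x)\big)^\top-\mu_{\lambda}\Big(\lambda-\tfrac{\bfone_n}{n}\Big).
\]
By Lemma \ref{lem: hypergrad} each $\nabla\Phi_i$ is $L_{\nabla\Phi}$-Lipschitz, and by Assumption \ref{aspt: f_bound} we have $\|\nabla\Phi_i(x)\|\le L_\Phi$ (hence each $\Phi_i$ is $L_\Phi$-Lipschitz) and $|\Phi_i(x)|\le b_\Phi$.

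First I would dispatch the six partial-gradient claims, each a one-line application of the triangle and Cauchy--Schwarz inequalities together with $\sum_i\lambda_i=1$: for $\nabla_1\Phi_{\mu_{\lambda}}(\cdot,\lambda)=\nabla_1\Phi(\cdot,\lambda)$, $\|\sum_i\lambda_i(\nabla\Phi_i(x)-\nabla\Phi_i(x'))\|\le L_{\nabla\Phi}\|x-x'\|$; for $\nabla_1\Phi_{\mu_{\lambda}}(x,\cdot)=\nabla_1\Phi(x,\cdot)$, $\|\sum_i(\lambda_i-\lambda_i')\nabla\Phi_i(x)\|\le\|\lambda-\lambda'\|\,(\sum_i\|\nabla\Phi_i(x)\|^2)^{1/2}\le\sqrt{n}L_\Phi\|\lambda-\lambda'\|$; for $\nabla_2\Phi_{\mu_{\lambda}}(\cdot,\lambda)$ the difference equals $(\Phi_i(x)-\Phi_i(x'))_i$, of norm at most $\sqrt{n}L_\Phi\|x-x'\|$; and for $\nabla_2\Phi_{\mu_{\lambda}}(x,\cdot)$ the difference is exactly $-\mu_{\lambda}(\lambda-\lambda')$, giving constant $\mu_{\lambda}$.

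The main obstacle is the Lipschitz continuity of $\nabla\Psi$, which rests on two facts: (i) $\Phi_{\mu_{\lambda}}(x,\cdot)$ is $\mu_{\lambda}$-strongly concave on the compact convex set $\Delta_n$ (linear in $\lambda$ plus a $\mu_{\lambda}$-strongly-concave penalty), so the maximizer $\lambda_*(x)$ is unique and, by Danskin's theorem, $\Psi$ is differentiable with $\nabla\Psi(x)=\nabla_1\Phi_{\mu_{\lambda}}(x,\lambda_*(x))$; and (ii) $\lambda_*(\cdot)$ is Lipschitz. For (ii) I would use the first-order optimality inequalities $\langle\nabla_2\Phi_{\mu_{\lambda}}(x,\lambda_*(x)),\lambda_*(x')-\lambda_*(x)\rangle\le0$ and $\langle\nabla_2\Phi_{\mu_{\lambda}}(x',\lambda_*(x')),\lambda_*(x)-\lambda_*(x')\rangle\le0$; summing them, invoking $\mu_{\lambda}$-strong concavity in $\lambda$ and the identity $\nabla_2\Phi_{\mu_{\lambda}}(x,\lambda)-\nabla_2\Phi_{\mu_{\lambda}}(x',\lambda)=(\Phi_i(x)-\Phi_i(x'))_i$ yields $\|\lambda_*(x)-\lambda_*(x')\|\le\frac{\sqrt{n}L_\Phi}{\mu_{\lambda}}\|x-x'\|$ (equivalently, $\lambda_*(x)=\Pi_{\Delta_n}(\bfone_n/n+\mu_{\lambda}^{-1}(\Phi_i(x))_i)$ and one uses nonexpansiveness of the projection).

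Finally, the splitting
\[
\|\nabla\Psi(x)-\nabla\Psi(x')\|\le\|\nabla_1\Phi_{\mu_{\lambda}}(x,\lambda_*(x))-\nabla_1\Phi_{\mu_{\lambda}}(x',\lambda_*(x))\|+\|\nabla_1\Phi_{\mu_{\lambda}}(x',\lambda_*(x))-\nabla_1\Phi_{\mu_{\lambda}}(x',\lambda_*(x'))\|
\]
bounds the first term by $L_{\nabla\Phi}\|x-x'\|$ (claim already proved) and the second by $\sqrt{n}L_\Phi\|\lambda_*(x)-\lambda_*(x')\|\le\frac{nL_\Phi^2}{\mu_{\lambda}}\|x-x'\|$, so $\nabla\Psi$ is Lipschitz with constant of order $L_{\nabla\Phi}+nL_\Phi^2/\mu_{\lambda}$, which is majorized by the stated $L_{\nabla\Psi}=\frac{n}{\mu_{\lambda}}(L_\Phi^2+b_\Phi L_{\nabla\Phi})+L_{\nabla\Phi}$ (the extra $\frac{n}{\mu_{\lambda}}b_\Phi L_{\nabla\Phi}$ is slack picked up along a coarser route, e.g. differentiating $\sum_i(\lambda_*(x))_i\Phi_i(x)$ directly and bounding the term $(\nabla\lambda_*(x))^\top(\Phi_i(x))_i$ rather than discarding it by the envelope theorem). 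Apart from the Danskin/envelope justification and the solution-map Lipschitz estimate, everything reduces to bookkeeping with the triangle and Cauchy--Schwarz inequalities.
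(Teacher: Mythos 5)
Your proposal is correct, and the six partial-gradient claims are handled exactly as in the paper (closed forms plus triangle/Cauchy--Schwarz with $\sum_i\lambda_i=1$). Where you genuinely diverge is the smoothness of $\Psi$: the paper views $\min_x\Psi(x)$ itself as a bilevel problem with inner variable $\lambda$ and invokes Lemma \ref{lem: hypergrad} to eliminate $\lambda^*(x)$, arriving at the explicit formula $\nabla\Psi(x)=\frac{1}{\mu_{\lambda}}\sum_{i=1}^n\Phi_i(x)\nabla\Phi_i(x)+\frac{1}{n}\sum_{i=1}^n\nabla\Phi_i(x)$, and then reads off the Lipschitz constant termwise, which is precisely where the $b_\Phi L_{\nabla\Phi}$ in $L_{\nabla\Psi}$ comes from. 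You instead use a Danskin-type argument, $\nabla\Psi(x)=\nabla_1\Phi_{\mu_{\lambda}}(x,\lambda_*(x))$, together with Lipschitz continuity of the solution map, $\|\lambda_*(x)-\lambda_*(x')\|\le\frac{\sqrt{n}L_\Phi}{\mu_{\lambda}}\|x-x'\|$ (via strong concavity or, equivalently, $\lambda_*(x)=\Pi_{\Delta_n}\bigl(\bfone_n/n+\mu_{\lambda}^{-1}(\Phi_i(x))_i\bigr)$ and nonexpansiveness), and a two-term splitting that gives the constant $L_{\nabla\Phi}+nL_\Phi^2/\mu_{\lambda}$, which is dominated by the stated $L_{\nabla\Psi}$ and hence suffices. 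Each route buys something: the paper's closed form is what makes the specific constant in the lemma exact, but its derivation implicitly uses the unconstrained stationarity of $\lambda^*(x)$ (Lemma \ref{lem: hypergrad} is stated for an unconstrained inner problem, so the simplification is only transparent when the simplex constraint is inactive at $\lambda^*(x)$); your argument works directly with the constrained maximizer over $\Delta_n$, needs no closed form for $\nabla\Psi$, and yields a slightly sharper constant, at the cost of invoking Danskin's theorem and the solution-map estimate as additional (standard) ingredients.
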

\begin{proof}
    For $\nabla\Psi$ we first notice that the nonconvex-strongly-concave problem in \eqref{eq: minmaxbo_reform_reg} can be reformulated as a bilevel problem:
    \[
        \min_{x\in \cX}\Psi(x) = \Phi_{\mu_{\lambda}}(x, \lambda^*(x))\ \ \text{s.t. }\ \lambda^*(x) = \argmin_{\lambda\in \Delta_n}\left(-\Phi_{\mu_{\lambda}}(x,\lambda)\right) = \frac{\mu_{\lambda}}{2}\norm{\lambda - \frac{\bfone_n}{n}}^2 - \sum_{i=1}^{n}\lambda_i\Phi_i(x).
    \]
    By Lemma \ref{lem: hypergrad} we know
    \begin{align*}
        \nabla \Psi(x) = &\nabla_1\Phi_{\mu_{\lambda}}(x,\lambda^*(x)) - \nabla_{12}^2\Phi_{\mu_{\lambda}}(x,\lambda^*(x))\left(\nabla_{22}^2\Phi_{\mu_{\lambda}}(x,\lambda^*(x))\right)^{-1}\nabla_2\Phi_{\mu_{\lambda}}(x,\lambda^*(x)) \\
        =&\sum_{i=1}^{n}\lambda_i^*(x)\nabla\Phi_i(x) + \frac{1}{\mu_{\lambda}} \left(\nabla\Phi_1(x), ..., \nabla\Phi_n(x)\right)\left[ \begin{pmatrix}
            \Phi_1(x) \\
            \vdots \\
            \Phi_n(x)
        \end{pmatrix} - \mu_{\lambda}\left(\lambda^*(x) - \frac{\bfone_n}{n}\right)\right] \\
        = & \frac{1}{\mu_{\lambda}}\sum_{i=1}^{n}\Phi_i(x)\nabla\Phi_i(x) + \frac{1}{n}\sum_{i=1}^{n}\nabla\Phi_i(x),
    \end{align*}
    from which we know $\nabla\Psi(\cdot)$ is $L_{\nabla\Psi}$-Lipschitz continuous since 
    \begin{align*}
        &\norm{\Phi_i(x)\nabla\Phi_i(x) - \Phi_i(\tilde x)\nabla\Phi_i(\tilde x)} \\
        \leq &\norm{\Phi_i(x)\nabla\Phi_i(x) - \Phi_i(x)\nabla\Phi_i(\tilde x)} + \norm{\Phi_i(x)\nabla\Phi_i(\tilde x) - \Phi_i(\tilde x)\nabla\Phi_i(\tilde x)} \\
        \leq &\left(L_{\Phi}^2 + b_{\Phi}L_{\nabla\Phi}\right)\norm{x-y}.
    \end{align*}
    Note that for any fixed $\lambda \in \Delta_n$ and $x, \tilde x\in\cX$, we have
    \begin{equation}\label{eq: phi_mu_and_phi}
        \nabla_1\Phi_{\mu_{\lambda}}(x,\lambda) = \nabla_1\Phi(x,\lambda) = \sum_{i=1}^{n}\lambda_i\nabla\Phi_i(x),
    \end{equation}
    and 
    \begin{equation}\label{ineq: nabla_1_phi_mu_lip}
        \norm{\nabla_1\Phi_{\mu_{\lambda}}(x,\lambda) - \nabla_1\Phi_{\mu_{\lambda}}(\tilde x, \lambda)} = \norm{\sum_{i=1}^{n}\lambda_i\left(\nabla\Phi_i(x) - \nabla\Phi_i(\tilde x)\right)}\leq L_{\nabla \Phi}\norm{x - \tilde x}.
    \end{equation}
    Similarly, for any fixed $x\in \cX$ and $\lambda, \tilde \lambda \in \Delta_n$ we know
    \begin{equation}\label{ineq: nabla_phi_mu_lip}
        \norm{\nabla_1\Phi_{\mu_{\lambda}}(x,\lambda) - \nabla_1\Phi_{\mu_{\lambda}}(x,\tilde \lambda)} = \norm{\sum_{i=1}^{n}\left(\lambda_i-\tilde \lambda_i\right)\nabla\Phi_i(x)}\leq \sqrt{n}L_{\Phi}\norm{\lambda - \tilde \lambda}.
    \end{equation}
    \eqref{eq: phi_mu_and_phi}, \eqref{ineq: nabla_1_phi_mu_lip} and \eqref{ineq: nabla_phi_mu_lip} imply $\nabla_1\Phi_{\mu_{\lambda}}(\cdot, \lambda), \nabla_1\Phi(\cdot, \lambda)$ are $L_{\nabla \Phi}$-Lipschitz continuous and $\nabla_1\Phi_{\mu_{\lambda}}(x,\cdot), \nabla_1\Phi(x,\cdot)$ are $L_{\nabla_1\Phi_{\mu_{\lambda}}}$-Lipschitz continuous. Finally, for $\nabla_2\Phi_{\mu_{\lambda}}(x,\lambda)$ we have
    \[
        \nabla_2\Phi_{\mu_{\lambda}}(x,\lambda) = \left(\Phi_1(x), ..., \Phi_n(x)\right)\T - \mu_{\lambda}\left(\lambda - \frac{\bfone_n}{n}\right),
    \]
    and thus $\nabla_2\Phi_{\mu_{\lambda}}(\cdot,\lambda), \nabla_2\Phi_{\mu_{\lambda}}(x,\cdot)$ are $\sqrt{n}L_{\Phi}, \mu_{\lambda}$-Lipschitz continuous respectively.
\end{proof}

Next we present a technical lemma that will be used in analyzing the strongly convex function over a convex compact set.
\begin{lemma}\label{lem: coercivity}
    Suppose $f(x)$ is $\mu$-strongly convex and $L$-smooth over a convex compact set $\cX$. For any $\tau \leq \frac{1}{L}$ define $x_+ = \Pi_{\cX}(x - \tau\nabla f(x))$ and $x_* = \argmin_{x\in\cX} f(x)$, we have
    \[
        \left(1 - \sqrt{1 - \tau\mu}\right)\norm{x - x_*}\leq \norm{x-x_+}.
    \]
\end{lemma}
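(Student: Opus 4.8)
The plan is to reduce the claim to the standard fact that one step of projected gradient descent contracts the distance to the minimizer. Concretely, I would first prove the auxiliary bound
\[
    \norm{x_+ - x_*} \le \sqrt{1 - \tau\mu}\,\norm{x - x_*},
\]
which is the constrained counterpart of Lemma~\ref{lem: gd_decrease} (the square root appears because we bound the squared distance). Given this bound, the lemma follows immediately from the triangle inequality:
\[
    \norm{x - x_*} \le \norm{x - x_+} + \norm{x_+ - x_*} \le \norm{x - x_+} + \sqrt{1-\tau\mu}\,\norm{x - x_*},
\]
and rearranging gives $\big(1 - \sqrt{1-\tau\mu}\big)\norm{x - x_*} \le \norm{x - x_+}$.

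To establish the contraction, I would use that $x_* = \argmin_{x\in\cX} f(x)$ satisfies the fixed-point identity $x_* = \Pi_{\cX}\big(x_* - \tau\nabla f(x_*)\big)$ for every $\tau > 0$, so that, by nonexpansiveness of the projection onto the convex set $\cX$,
\[
    \norm{x_+ - x_*} \le \norm{(x - x_*) - \tau\big(\nabla f(x) - \nabla f(x_*)\big)}.
\]
Writing $\Delta = \nabla f(x) - \nabla f(x_*)$ and expanding the square, I would bound the quadratic term using (i) the co-coercivity inequality $\norm{\Delta}^2 \le L\,\<\Delta, x - x_*>$ for the convex $L$-smooth $f$, which together with $\tau L \le 1$ yields $\tau^2\norm{\Delta}^2 \le \tau\,\<\Delta, x - x_*>$, and then the cross term using (ii) $\mu$-strong convexity, $\<\Delta, x - x_*> \ge \mu\norm{x-x_*}^2$. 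Combining,
\[
    \norm{x_+ - x_*}^2 \le \norm{x - x_*}^2 - \tau\,\<\Delta, x - x_*> \le (1 - \tau\mu)\norm{x - x_*}^2,
\]
which is the desired contraction.

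I do not expect any serious obstacle; this is a routine argument, and the hypothesis $\tau \le 1/L$ enters precisely through $\tau^2 L \le \tau$. The only point that warrants a line of care is that the co-coercivity and strong-monotonicity inequalities are invoked for the pair $x, x_* \in \cX$ (so that the segment $[x,x_*]$ stays in $\cX$ by convexity); this is legitimate in every setting where the lemma is applied, since there the relevant objective is in fact globally $\mu$-strongly convex and $L$-smooth.
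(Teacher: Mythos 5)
Your proposal is correct, but it takes a genuinely different route from the paper. You prove the constrained one-step contraction $\norm{x_+ - x_*}\leq \sqrt{1-\tau\mu}\,\norm{x-x_*}$ by combining the fixed-point identity $x_*=\Pi_{\cX}(x_*-\tau\nabla f(x_*))$, nonexpansiveness of the projection, co-coercivity (with $\tau L\leq 1$), and strong monotonicity, and then conclude by the triangle inequality. The paper instead works directly with the inner product $\<x-x_+, x-x_*>$: it invokes Nesterov's Corollary 2.2.4 to get $\frac{1}{\tau}\<x-x_+,x-x_*> \geq \frac{1}{2\tau}\norm{x-x_+}^2+\frac{\mu}{2}\norm{x-x_*}^2+\frac{\mu}{2}\norm{x_+-x_*}^2$, then uses Cauchy--Schwarz and Young's inequality to solve the resulting quadratic relation, obtaining $\norm{x-x_+}\geq (1-\sqrt{1-2r})\norm{x-x_*}$ with $r=\tau\mu/(1+\tau\mu)$, which is in fact marginally sharper than the stated bound before being relaxed to $1-\sqrt{1-\tau\mu}$. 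The trade-off is as you anticipate: your argument is more elementary and self-contained (a standard projected-gradient contraction plus triangle inequality), but the co-coercivity inequality you use is a statement about convex $L$-smooth functions whose standard proof evaluates $f$ at points that need not lie in $\cX$, so strictly speaking it needs smoothness and convexity beyond the compact set; you correctly flag this, and it is harmless in the paper's application, where the lemma is applied to $\Phi_{\mu_\lambda}(x^k,\cdot)$, a globally smooth, globally strongly convex quadratic restricted to $\Delta_n$. The paper's argument, by contrast, operates purely through the projection/optimality inequalities on $\cX$ and yields a slightly better constant.
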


\begin{proof}
    By Corollary 2.2.4 in \cite{nesterov2018lectures} we know
    \begin{align*}
        \frac{1}{\tau}\<x-x_+, x - x_*> &\geq \frac{1}{2\tau}\norm{x - x_+}^2 + \frac{\mu}{2}\norm{x - x_*}^2 + \frac{\mu}{2}\norm{x_+ - x_*}^2\\
        &= \left(\frac{1}{2\tau} + \frac{\mu}{2}\right)\norm{x - x_+}^2 + \mu\norm{x - x_*}^2 - \mu\<x - x_+, x - x_*>
    \end{align*}
    which implies
    \[
        \norm{x - x_+}\norm{x - x_*}\geq \<x-x_+, x - x_*> \geq \frac{1}{2}\norm{x - x_+}^2 + r\norm{x - x_*}^2
    \]
    where $r = \frac{\mu}{\frac{1}{\tau} + \mu}\leq \frac{1}{2}$. Applying Young's inequality to the left hand side of the above inequality, we know
    \[
        \frac{1 + \sqrt{1-2r}}{4r}\norm{x - x_+}^2 + \frac{r}{1 + \sqrt{1-2r}}\norm{x - x_*}^2\geq \frac{1}{2}\norm{x - x_+}^2 + r\norm{x - x_*}^2
    \]
    which gives
    \[
        \norm{x - x_+}\geq \left(1 - \sqrt{1-2r}\right)\norm{x - x_*}\geq \left(1 - \sqrt{1 - \tau\mu}\right)\norm{x - x_*}.
    \]
    This completes the proof.
\end{proof}
The next lemma shows the relation between the stationarity used in Theorem \ref{thm: momasoba_convergence} and our measure of optimality $\tilde V_k$ in \eqref{eq: V_k_momasoba}.
\begin{lemma}\label{lem: lambda_convergence}
    Suppose Assumptions \ref{aspt: smoothness}, \ref{aspt: stochastic_grad} hold for all $f_i, g_i$ and Assumption \ref{aspt: f_bound} holds. If $\tau_{\lambda}\mu_{\lambda} = 1$, then in Algorithm \ref{algo: momasoba} we have
    \begin{align*}
        &\frac{1}{\tau_x^2}\norm{x^k - \Pi_{\cX}\left(x^k - \tau_x\nabla_1\Phi_{\mu_{\lambda}}(x^k, \lambda^k)\right) }^2\leq 2\left(\frac{1}{\tau_x^2}\norm{x_+^k - x^k}^2 + \norm{h_x^k - \nabla_1\Phi_{\mu_{\lambda}}(x^k, \lambda^k) }^2\right),\\
            &\norm{\lambda^k - \lambda_*^k}^2\leq \frac{2}{\mu_{\lambda}^2}\left(\frac{1}{\tau_{\lambda}^2}\norm{\lambda_+^k - \lambda^k}^2 + \norm{h_{\lambda}^k - \nabla_2\Phi_{\mu_{\lambda}}(x^k, \lambda^k)}^2\right),
    \end{align*}
    which together imply
    \begin{align*}
        \norm{\frac{1}{\tau_x}\left(x^k - \Pi_{\cX}\left(x^k - \tau_x\nabla_1\Phi_{\mu_{\lambda}}(x^k, \lambda^k)\right) \right)}^2 + \norm{\lambda^k - \lambda_*^k}^2\leq\max\left(2, \frac{2}{\mu_{\lambda}^2}\right)\tilde V_k.
    \end{align*}
\end{lemma}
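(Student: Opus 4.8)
The plan is to establish the two displayed inequalities separately and then add them. The first inequality is proven exactly as in Lemma~\ref{lem: x_convergence}: I would split $x^k - \Pi_\cX(x^k - \tau_x\nabla_1\Phi_{\mu_\lambda}(x^k,\lambda^k)) = (x^k - x_+^k) + (x_+^k - \Pi_\cX(x^k - \tau_x\nabla_1\Phi_{\mu_\lambda}(x^k,\lambda^k)))$, apply $\|u+v\|^2 \le 2\|u\|^2 + 2\|v\|^2$, and use $x_+^k = \Pi_\cX(x^k - \tau_x h_x^k)$ together with the nonexpansiveness of $\Pi_\cX$ to bound the second piece by $\tau_x\|h_x^k - \nabla_1\Phi_{\mu_\lambda}(x^k,\lambda^k)\|$; dividing by $\tau_x^2$ finishes it. No new ingredient is needed here.

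For the second inequality, the key is that $\lambda \mapsto -\Phi_{\mu_\lambda}(x^k,\lambda)$ is, on $\Delta_n$, a quadratic with Hessian $\mu_\lambda\mathbf{I}$ (the term $\Phi(x^k,\cdot)$ is affine, so all curvature comes from the regularizer), hence simultaneously $\mu_\lambda$-strongly convex and $\mu_\lambda$-smooth, with unique minimizer $\lambda_*^k$ over $\Delta_n$. I would introduce the ``noiseless'' projected step $\tilde\lambda_+^k := \Pi_{\Delta_n}(\lambda^k + \tau_\lambda\nabla_2\Phi_{\mu_\lambda}(x^k,\lambda^k))$ and invoke Lemma~\ref{lem: coercivity} with $\mu = L = \mu_\lambda$ and stepsize $\tau_\lambda$; the requirement $\tau_\lambda \le 1/L$ holds since $\tau_\lambda\mu_\lambda = 1$, and the prefactor degenerates to $1 - \sqrt{1 - \tau_\lambda\mu_\lambda} = 1$, so $\|\lambda^k - \lambda_*^k\| \le \|\lambda^k - \tilde\lambda_+^k\|$. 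A triangle inequality $\|\lambda^k - \tilde\lambda_+^k\| \le \|\lambda^k - \lambda_+^k\| + \|\lambda_+^k - \tilde\lambda_+^k\|$, the bound $\|\lambda_+^k - \tilde\lambda_+^k\| \le \tau_\lambda\|h_\lambda^k - \nabla_2\Phi_{\mu_\lambda}(x^k,\lambda^k)\|$ from nonexpansiveness of $\Pi_{\Delta_n}$, and then squaring with $2a^2+2b^2$ yield $\|\lambda^k - \lambda_*^k\|^2 \le 2\|\lambda_+^k - \lambda^k\|^2 + 2\tau_\lambda^2\|h_\lambda^k - \nabla_2\Phi_{\mu_\lambda}(x^k,\lambda^k)\|^2$; substituting $\tau_\lambda^2 = 1/\mu_\lambda^2$ (so that $2\|\lambda_+^k - \lambda^k\|^2 = \tfrac{2}{\mu_\lambda^2}\cdot\tfrac{1}{\tau_\lambda^2}\|\lambda_+^k - \lambda^k\|^2$) gives precisely the claimed estimate.

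Adding the two inequalities and pulling out the constant $\max(2, 2/\mu_\lambda^2)$ gives the combined bound, since the resulting right-hand side is then at most $\max(2, 2/\mu_\lambda^2)$ times the four-term sum that defines $\tilde V_k$ in~\eqref{eq: V_k_momasoba}. The only point requiring a moment's care is observing that $-\Phi_{\mu_\lambda}(x^k,\cdot)$ is $\mu_\lambda$-smooth (not just strongly convex), so that Lemma~\ref{lem: coercivity} is applicable at the borderline stepsize $\tau_\lambda = 1/\mu_\lambda$, where the contraction factor collapses to exactly $1$; the rest is routine manipulation with triangle inequalities and the nonexpansiveness of Euclidean projections.
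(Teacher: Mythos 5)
Your proposal is correct and follows essentially the same route as the paper: the first bound via the split-and-project argument of Lemma~\ref{lem: x_convergence}, and the second via Lemma~\ref{lem: coercivity} applied to the $\mu_\lambda$-strongly convex and $\mu_\lambda$-smooth function $-\Phi_{\mu_\lambda}(x^k,\cdot)$ with the noiseless projected step, nonexpansiveness of $\Pi_{\Delta_n}$, and $\tau_\lambda\mu_\lambda=1$. The only cosmetic difference is that you substitute $\tau_\lambda\mu_\lambda=1$ up front (collapsing the contraction factor to $1$) while the paper carries the factor $(1+\sqrt{1-\tau_\lambda\mu_\lambda})^2/\tau_\lambda^2$ and sets it at the end.
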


\begin{proof}
    The first inequality follows \eqref{lem: x_convergence}:
    \begin{align*}
        &\norm{x^k - \Pi_{\cX}\left(x^k - \tau_x\nabla_1\Phi_{\mu_{\lambda}}(x^k, \lambda^k)\right)}^2\\
        \leq &2\left(\norm{x_+^k - x^k}^2 + \norm{\Pi_{\cX}\left(x^k - \tau_x h_x^k\right) - \Pi_{\cX}\left(x^k - \tau_x\nabla_1\Phi_{\mu_{\lambda}}(x^k, \lambda^k)\right)}^2\right) \\
        \leq &2\left(\norm{x_+^k - x^k}^2 + \tau_x^2\norm{h_x^k - \nabla_1\Phi_{\mu_{\lambda}}(x^k, \lambda^k) }^2\right),
    \end{align*}
    where the first inequality uses Cauchy-Schwarz inequality and the second inequality uses the non-expansiveness of projection onto a convex compact set. Recall that 
    \[
        \lambda_*^k = \argmin_{\lambda\in\Delta_n} \Phi_{\mu_{\lambda}}(x^k, \lambda)
    \]
    which is a minimizer (over the probability simplex) of a $\mu_{\lambda}$-smooth and $\mu_{\lambda}$-strongly convex function $\Phi_{\mu_{\lambda}}(x^k, \cdot)$. Hence we know from Lemma \ref{lem: coercivity} that
    \begin{align*}
        &\mu_{\lambda}^2\norm{\lambda_*^k - \lambda^k}^2\\
        \leq &\frac{\left(1 + \sqrt{1-\tau_{\lambda}\mu_{\lambda}}\right)^2}{\tau_{\lambda}^2} \norm{\lambda^k - \Pi_{\Delta_n}\left(\lambda^k + \tau_{\lambda}\nabla_2\Phi_{\mu_{\lambda}}(x^k, \lambda^k)\right)}^2\\
        \leq &\frac{2\left(1 + \sqrt{1-\tau_{\lambda}\mu_{\lambda}}\right)^2}{\tau_{\lambda}^2}\left(\norm{\lambda_+^k - \lambda^k}^2 + \norm{\Pi_{\Delta_n}(\lambda^k + \tau_{\lambda}h_{\lambda}^k) - \Pi_{\Delta_n}\left(\lambda^k + \tau_{\lambda}\nabla_2\Phi_{\mu_{\lambda}}(x^k, \lambda^k)\right)}^2\right) \\
        \leq &\frac{2\left(1 + \sqrt{1-\tau_{\lambda}\mu_{\lambda}}\right)^2}{\tau_{\lambda}^2}\left(\norm{\lambda_+^k - \lambda^k}^2 + \tau_{\lambda}^2\norm{h_{\lambda}^k - \nabla_2\Phi_{\mu_{\lambda}}(x^k, \lambda^k)}^2\right),
    \end{align*}
    where the second inequality uses Cauchy-Schwarz inequality and the third inequality uses non-expansiveness of the projection onto a convex compact set. Setting $\tau_{\lambda}\mu_{\lambda}= 1$ completes the proof.
\end{proof}
\begin{lemma}\label{lem: variance}
    Suppose Assumptions \ref{aspt: smoothness}, \ref{aspt: stochastic_grad} hold for all $f_i, g_i$ and Assumption \ref{aspt: f_bound} holds. In Algorithm \ref{algo: momasoba} we have
    \begin{align}
        &\E\left[\norm{w^{k+1} - \E\left[w^{k+1}\middle|\setF_k\right]}^2\right] \leq \sigma_{w,k+1}^2 \notag\\
        &\sigma_{w,k+1}^2 := \sigma_w^2 + 2\sigma_{g,2}^2\E\left[\sum_{i=1}^{n}\lambda_i^k\norm{z_i^k - z_{*,i}^k}^2\right], \sigma_w^2 = \sigma_{f,1}^2 + \frac{2\sigma_{g,2}^2L_f^2}{\mu_g^2}\label{ineq: w_var}\\
        &\E\left[\norm{h_x^{k+1} - h_x^k}^2\right]\leq \sigma_{h_x, k}^2,\quad \E\left[\norm{h_{\lambda}^{k+1} - h_{\lambda}^k}^2\right]\leq \sigma_{h_{\lambda}, k}^2, \label{ineq: h_var} \\
        &\sigma_{h_x, k}^2 := 2\theta_k^2\E\left[\norm{h_x^k - \nabla_1\Phi_{\mu_{\lambda}}(x^k,\lambda^k)}^2 + \norm{\E\left[w^{k+1}\middle|\setF_k\right] -\nabla_1\Phi_{\mu_{\lambda}}(x^k, \lambda^k) }^2\right] +\theta_k^2\sigma_{w,k+1}^2 \notag\\
        &\sigma_{h_{\lambda}, k}^2 :=\theta_k^2\E\left[\norm{h_{\lambda}^k - \nabla_2\Phi_{\mu_{\lambda}}(x^k,\lambda^k)}^2\right] + n\theta_k^2\sigma_{f,0}^2.
    \end{align}
\end{lemma}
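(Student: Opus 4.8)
\textbf{Proof plan for Lemma \ref{lem: variance}.} The argument is the multi-objective analogue of Lemma \ref{lem: variance_simple}, with the only genuinely new ingredient being the bookkeeping for the sum over the $n$ tasks and the simplex weights $\lambda^k$. First I would establish \eqref{ineq: w_var}. Writing, with the notation of \eqref{append: notations},
\[
 w^{k+1} - \E\!\left[w^{k+1}\middle|\setF_k\right] = \sum_{i=1}^n \lambda_i^k\Big[\big(u_{x,i}^{k+1}-\E[u_{x,i}^{k+1}|\setF_k]\big) - \big(J_i^{k+1}-\E[J_i^{k+1}|\setF_k]\big)z_i^k\Big],
\]
and using that conditionally on $\setF_k$ the $2n$ noises $\{u_{x,i}^{k+1},J_i^{k+1}\}_{i=1}^n$ are mean-zero and mutually independent (Assumption \ref{aspt: f_bound}), the conditional second moment collapses to $\sum_{i=1}^n (\lambda_i^k)^2\big(\E[\norm{u_{x,i}^{k+1}-\E[u_{x,i}^{k+1}|\setF_k]}^2|\setF_k] + \E[\norm{J_i^{k+1}-\E[J_i^{k+1}|\setF_k]}^2|\setF_k]\,\norm{z_i^k}^2\big)$. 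Bounding the oracle variances by $\sigma_{f,1}^2$ and $\sigma_{g,2}^2$, using $(\lambda_i^k)^2\le\lambda_i^k$ together with $\sum_i\lambda_i^k=1$, and then $\norm{z_i^k}^2\le 2\norm{z_{*,i}^k}^2+2\norm{z_i^k-z_{*,i}^k}^2\le 2L_f^2/\mu_g^2+2\norm{z_i^k-z_{*,i}^k}^2$ via \eqref{ineq: z_star_bound}, gives the $\setF_k$-conditional bound; taking full expectation yields $\sigma_{w,k+1}^2$.

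Next, since $h_x^{k+1}-h_x^k = \theta_k(w^{k+1}-h_x^k)$ and $h_x^k$ is $\setF_k$-measurable, the bias--variance split gives $\E[\norm{h_x^{k+1}-h_x^k}^2|\setF_k] = \theta_k^2\big(\E[\norm{w^{k+1}-\E[w^{k+1}|\setF_k]}^2|\setF_k] + \norm{\E[w^{k+1}|\setF_k]-h_x^k}^2\big)$. Inserting the $w$-variance bound from the previous step and then
$\norm{\E[w^{k+1}|\setF_k]-h_x^k}^2 \le 2\norm{\E[w^{k+1}|\setF_k]-\nabla_1\Phi_{\mu_{\lambda}}(x^k,\lambda^k)}^2 + 2\norm{h_x^k-\nabla_1\Phi_{\mu_{\lambda}}(x^k,\lambda^k)}^2$, and finally taking expectations, produces exactly $\sigma_{h_x,k}^2$ (this step is identical in form to the $h$-bound in Lemma \ref{lem: variance_simple}).

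Finally, for $h_{\lambda}^{k+1}-h_{\lambda}^k = \theta_k\big(s^{k+1}-\mu_{\lambda}(\lambda^k-\tfrac{\bfone_n}{n})-h_{\lambda}^k\big)$, I would split $s^{k+1}-\mu_{\lambda}(\lambda^k-\tfrac{\bfone_n}{n})-h_{\lambda}^k$ into the conditionally mean-zero term $s^{k+1}-\E[s^{k+1}|\setF_k]$, whose conditional second moment is $\sum_{i=1}^n\E[\norm{s_i^{k+1}-f_i(x^k,y_i^k)}^2|\setF_k]\le n\sigma_{f,0}^2$ by Assumption \ref{aspt: f_bound}, plus the $\setF_k$-measurable term $\E[s^{k+1}|\setF_k]-\mu_{\lambda}(\lambda^k-\tfrac{\bfone_n}{n})-h_{\lambda}^k$, which equals $\nabla_2\Phi_{\mu_{\lambda}}(x^k,\lambda^k)-h_{\lambda}^k$ up to the bias vector $(f_i(x^k,y_i^k)-\Phi_i(x^k))_i$; the latter is controlled by $L_f\norm{y_i^k-y_{*,i}^k}$ using the $L_f$-Lipschitzness of $f_i$ in its second argument, so it is carried along exactly as the $z$-errors are carried in $\sigma_{w,k+1}^2$, giving $\sigma_{h_{\lambda},k}^2$. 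The main point requiring care is not any single inequality but the simultaneous use of conditional independence across the $n$ tasks \emph{and} across the gradient/Jacobian oracles, together with the passage from $(\lambda_i^k)^2$ to $\lambda_i^k$; once this is set up correctly, each of the three bounds is the same one-step moving-average variance estimate already used in the single-objective case.
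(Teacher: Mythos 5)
Your treatment of the first two bounds coincides with the paper's proof step for step: conditional independence across the $n$ tasks and across the gradient/Jacobian oracles collapses the conditional second moment of $w^{k+1}$ into the weighted sum $\sum_i(\lambda_i^k)^2(\cdot)$, then $(\lambda_i^k)^2\le\lambda_i^k$, $\sum_i\lambda_i^k=1$, $\norm{z_i^k}^2\le 2\norm{z_{*,i}^k}^2+2\norm{z_i^k-z_{*,i}^k}^2$ and \eqref{ineq: z_star_bound} give $\sigma_{w,k+1}^2$; and the bias--variance split of $h_x^{k+1}-h_x^k=\theta_k(w^{k+1}-h_x^k)$ followed by Cauchy--Schwarz on the $\setF_k$-measurable part gives $\sigma_{h_x,k}^2$. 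Nothing to add there.

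The $h_\lambda$ bound is where your proposal and the stated lemma part ways. You correctly observe that the $\setF_k$-measurable part $\E[s^{k+1}\mid\setF_k]-\mu_{\lambda}(\lambda^k-\bfone_n/n)-h_{\lambda}^k$ equals $\nabla_2\Phi_{\mu_{\lambda}}(x^k,\lambda^k)-h_{\lambda}^k$ only up to the bias vector with entries $f_i(x^k,y_i^k)-\Phi_i(x^k)$, each of size at most $L_f\norm{y_i^k-y_{*,i}^k}$. But your claim that carrying this bias along ``gives $\sigma_{h_\lambda,k}^2$'' does not hold: any honest way of carrying it (e.g.\ Cauchy--Schwarz) yields a bound of the form $2\theta_k^2\E[\norm{h_{\lambda}^k-\nabla_2\Phi_{\mu_{\lambda}}(x^k,\lambda^k)}^2]+2\theta_k^2L_f^2\E[\sum_{i=1}^n\norm{y_i^k-y_{*,i}^k}^2]+n\theta_k^2\sigma_{f,0}^2$, which is strictly larger than the stated $\sigma_{h_{\lambda},k}^2$ (coefficient $\theta_k^2$ and no $y$-error term). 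The paper's own proof reaches the stated constant only by bounding $\norm{h_{\lambda}^k-\E[s^{k+1}\mid\setF_k]+\mu_{\lambda}(\lambda^k-\bfone_n/n)}^2$ directly by $\norm{h_{\lambda}^k-\nabla_2\Phi_{\mu_{\lambda}}(x^k,\lambda^k)}^2$, i.e.\ by identifying $\E[s^{k+1}\mid\setF_k]$ with $(\Phi_1(x^k),\dots,\Phi_n(x^k))\T$ --- exactly the identification whose failure your bias term measures (elsewhere, e.g.\ in Lemma \ref{lem: x_dual}, the paper does account for this discrepancy through an explicit $L_f^2\sum_i\norm{y_i^k-y_{*,i}^k}^2$ term). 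So as written your argument proves a modified version of the second bound in \eqref{ineq: h_var} with the extra $y$-error term rather than the lemma as stated; to match the statement you would have to either drop the bias as the paper's proof does (which is not justified as such), or state the enlarged $\sigma_{h_{\lambda},k}^2$ and check that it can still be absorbed downstream in \eqref{ineq: hlam_simplify}, where the additional $\theta_k^2L_f^2\sum_i\norm{y_i^k-y_{*,i}^k}^2$ contribution is of the same type as terms already controlled by Lemma \ref{lem: yz_error_minmax}.
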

\begin{proof}
We first consider $w^k$. Note that
\[
    w^{k+1} - \E\left[w^{k+1}\middle|\setF_k\right] = \sum_{i=1}^{n}\lambda_i^k\left(u_{x,i}^{k+1} - \E\left[u_{x,i}^{k+1}\middle|\setF_k\right] - \left(J_i^{k+1} - \E\left[J_i^{k+1}\middle|\setF_k\right]\right)z_i^k\right).
\]
Hence we know 
\begin{align*}
    &\E\left[\norm{w^{k+1} - \E\left[w^{k+1}\middle|\setF_k\right]}^2\middle|\setF_k\right] \\
        = &\sum_{i=1}^{n}\left(\lambda_i^k\right)^2\left(\E\left[\norm{u_{x,i}^k - \E\left[u_{x,i}^k\middle|\setF_k\right]}^2\middle|\setF_k\right] + 
        \E\left[\norm{J_i^{k+1} - \E\left[J_i^{k+1}\middle|\setF_k\right]}^2\middle|\setF_k\right]\norm{z_i^k}^2  \right) \\
        \leq &\sum_{i=1}^{n}\lambda_i^k\left(\sigma_{f,1}^2 + 2\sigma_{g,2}^2\norm{z_{*,i}^k}^2 + 2\sigma_{g,2}^2\norm{z_i^k - z_{*,i}^k}^2\right) \\
        \leq & \sigma_{f,1}^2 + \frac{2\sigma_{g,2}^2L_f^2}{\mu_g^2} + 2\sigma_{g,2}^2\sum_{i=1}^{n}\lambda_i^k\norm{z_i^k - z_{*,i}^k}^2,
\end{align*}
which proves \eqref{ineq: w_var}. Next for $\norm{h_x^{k+1} - h_x^k}$ we have
\begin{align*}
    &\E\left[\norm{h_x^{k+1} - h_x^k}^2\middle|\setF_k\right] \\
    = &\theta_k^2\E\left[\norm{h_x^k - \E\left[w^{k+1}\middle|\setF_k\right]}^2\middle|\setF_k\right] + \theta_k^2\E\left[\norm{w^{k+1} - \E\left[w^{k+1}\middle|\setF_k\right] }^2\middle|\setF_k\right] \\
    \leq &2\theta_k^2\E\left[\norm{h_x^k - \nabla_1\Phi(x^k,\lambda^k)}^2\middle|\setF_k\right] + 2\theta_k^2\E\left[\norm{\E\left[w^{k+1}\middle|\setF_k\right] -\nabla_1\Phi(x^k, \lambda^k) }^2 \middle|\setF_k\right] + \theta_k^2\sigma_{w,k+1}^2,
\end{align*}
which proves the first inequality of \eqref{ineq: h_var}. Similarly we have
\begin{align*}
    &\E\left[\norm{h_{\lambda}^{k+1} - h_{\lambda}^k}^2\middle|\setF_k\right] \\
    = &\theta_k^2\E\left[\norm{h_{\lambda}^k - \E\left[s^{k+1}\middle|\setF_k\right] + \mu_{\lambda}\left(\lambda^k - \frac{\bfone_n}{n}\right)}^2\middle|\setF_k\right] + \theta_k^2\E\left[\norm{s^{k+1} - \E\left[s^{k+1}\middle|\setF_k\right]}^2\middle|\setF_k\right] \\
    \leq &\theta_k^2\E\left[\norm{h_{\lambda}^k - \nabla_2\Phi_{\mu_{\lambda}}(x^k,\lambda^k)}^2\middle|\setF_k\right] + n\theta_k^2\sigma_{f,0}^2,
\end{align*}
which proves the second inequality of \eqref{ineq: h_var}.
\end{proof}

\subsubsection{Hypergradient Estimation Error}
\begin{lemma}\label{lem: yz_error_minmax}
Suppose Assumptions \ref{aspt: smoothness}, \ref{aspt: stochastic_grad} hold for all $f_i, g_i$ and Assumption \ref{aspt: f_bound} holds. In Algorithm \ref{algo: momasoba} if the stepsizes satisfy
    \begin{equation}\label{ineq: beta_gamma_condition}
        \beta_k < \frac{2}{\mu_g + L_{\nabla g}},\ \gamma_k \leq \min\left(\frac{1}{4\mu_g},\ \frac{0.06\mu_g}{\sigma_{g,2}^2}\right)
    \end{equation}
    then we have
    \begin{align}
        \begin{aligned}
            &\sum_{k=0}^{K}\alpha_k\E\left[\sum_{i=1}^{n}\norm{y_i^k - y_{*,i}^k}^2\right] \leq nC_{yx}\sum_{k=0}^{K}\alpha_k\E\left[\norm{x_+^k - x^k}^2\right] + \sum_{i=1}^{n}C_{y_i, 0} +  nC_{y,1}\left(\sum_{k=0}^{K}\alpha_k^2\right) \\
        &\sum_{k=0}^{K}\alpha_k\E\left[\sum_{i=1}^{n}\norm{z_i^k-z_{*,i}^k}^2\right] \leq nC_{zx}\sum_{k=0}^{K}\alpha_k\E\left[\norm{x_+^k - x^k}^2\right] + \sum_{i=1}^{n}C_{z_i, 0} +  nC_{z,1}\left(\sum_{k=0}^{K}\alpha_k^2\right)
        \end{aligned}
    \end{align}
    where constants $C_{yx}, C_{y,1}, C_{zx}, C_{z,1}$ are defined the same as those in Lemma \ref{lem: yz_error}. $C_{y_i, 0}, C_{z_i, 0}$ are defined as
    \begin{align*}
        &C_{y_i, 0} = \frac{1}{c_1\mu_g}\E\left[\norm{y_i^0 - y_{*,i}^0}^2\right],\\
        &C_{z_i,0} = \frac{5L_f^2}{\mu_g^2}\left(\frac{L_{\nabla_{22}^2g}^2}{\mu_g^2} + 1\right)\cdot\frac{1}{c_1\mu_g}\E\left[\norm{y_i^0 - y_{*,i}^0}^2\right] + \frac{1}{c_2\mu_g}\E\left[\norm{z_i^0 - z_{*,i}^0}^2\right].
    \end{align*}
\end{lemma}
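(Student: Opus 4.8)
The plan is to reduce the claim to $n$ applications of Lemma~\ref{lem: yz_error}. The key observation is that in Algorithm~\ref{algo: momasoba} the updates of $y_i^k$ and $z_i^k$ have exactly the same form as in Algorithm~\ref{alg:ma-soba}, and the targets $y_{*,i}^k = y_i^*(x^k)$ and $z_{*,i}^k = (\nabla_{22}^2 g_i(x^k, y_{*,i}^k))^{-1}\nabla_2 f_i(x^k, y_{*,i}^k)$ depend on the iterates only through $x^k$; in particular the simplex variable $\lambda^k$ (which enters only the $h_x^k, h_\lambda^k$ updates) plays no role in this lemma. Moreover, the moving-average update of $x^k$ gives $\norm{x^{k+1} - x^k} = \alpha_k\norm{x_+^k - x^k}$, exactly as in the bilevel case, so by Lemma~\ref{lem: hypergrad} the target drift satisfies $\norm{y_{*,i}^{k+1} - y_{*,i}^k} \leq \alpha_k L_{y^*}\norm{x_+^k - x^k}$ and $\norm{z_{*,i}^{k+1} - z_{*,i}^k} \leq \alpha_k L_{z^*}\norm{x_+^k - x^k}$.

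Concretely, I would fix $i \in \{1, \dots, n\}$ and repeat verbatim the argument in the proof of Lemma~\ref{lem: yz_error} with the substitutions $f \mapsto f_i$, $g \mapsto g_i$, $y \mapsto y_i$, $z \mapsto z_i$, $y_*^k \mapsto y_{*,i}^k$, $z_*^k \mapsto z_{*,i}^k$. This uses Cauchy-Schwarz to peel off the target drift, the one-step contraction of Lemma~\ref{lem: gd_decrease} together with Assumption~\ref{aspt: stochastic_grad} (requiring the first stepsize condition in \eqref{ineq: beta_gamma_condition}) for the $y_i$ recursion, the refined strongly-convex SGD estimate \eqref{ineq: z_decrease_2_simple} (requiring $\gamma_k \leq \min(1/(4\mu_g), 0.06\mu_g/\sigma_{g,2}^2)$ from \eqref{ineq: beta_gamma_condition}) together with the bound \eqref{ineq: z_star_bound} on $\norm{z_{*,i}^k}$ for the $z_i$ recursion, and telescoping over $k$. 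The output is
\[
    \sum_{k=0}^{K}\alpha_k\E\left[\norm{y_i^k - y_{*,i}^k}^2\right] \leq C_{yx}\sum_{k=0}^{K}\alpha_k\E\left[\norm{x_+^k - x^k}^2\right] + C_{y_i,0} + C_{y,1}\left(\sum_{k=0}^{K}\alpha_k^2\right),
\]
together with the analogous bound for $z_i$ with constants $C_{zx}, C_{z_i,0}, C_{z,1}$.

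The final step is to sum these inequalities over $i = 1, \dots, n$. Using the convention in \eqref{append: notations} that $L_{\nabla f}, L_{\nabla g}, L_{\nabla^2 g}, \mu_g$ (hence also $\kappa, L_{y^*}, L_{z^*}$) denote the worst-case-over-$i$ constants and that the variance bounds $\sigma_{f,1}, \sigma_{g,1}, \sigma_{g,2}$ are common to all $i$, the constants $C_{yx}, C_{y,1}, C_{zx}, C_{z,1}$ are independent of $i$; summing therefore produces the factors $nC_{yx}, nC_{y,1}, nC_{zx}, nC_{z,1}$ on the $\norm{x_+^k-x^k}^2$ and $\alpha_k^2$ terms, while the initialization contributions accumulate as $\sum_{i=1}^n C_{y_i,0}$ and $\sum_{i=1}^n C_{z_i,0}$, which is precisely the claimed bound.

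There is essentially no new difficulty; the one point that must be checked is that the per-task recursions genuinely decouple. This holds because $x^{k+1} - x^k$ enters the $(y_i, z_i)$ analysis only through its norm $\alpha_k\norm{x_+^k - x^k}$, which is carried through as an unexpanded quantity (to be bounded later in the primal-convergence step), and because the conditional independence in Assumption~\ref{aspt: f_bound} lets each conditional-expectation computation be performed with the single-task oracles $v_i^{k+1}, H_i^{k+1}, u_{y,i}^{k+1}$ alone, so no cross terms between tasks or with $\lambda^k$ arise.
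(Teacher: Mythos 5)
Your proposal is correct and matches the paper's own argument: the paper likewise observes that, since Assumptions \ref{aspt: smoothness} and \ref{aspt: stochastic_grad} hold for every $f_i, g_i$, the proof of Lemma \ref{lem: yz_error} applies verbatim with $y^k, y_*^k, z^k, z_*^k$ replaced by $y_i^k, y_{*,i}^k, z_i^k, z_{*,i}^k$, yielding the per-task bounds with constants $C_{yx}, C_{y,1}, C_{zx}, C_{z,1}$ independent of $i$, and then sums over $i=1,\dots,n$. Your additional check that the per-task recursions decouple (the coupling through $x^k$ entering only via $\alpha_k\norm{x_+^k - x^k}$, and $\lambda^k$ not appearing) is exactly the implicit justification the paper relies on.
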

\begin{proof}
    Note that the proof follows almost the same reasoning in Lemma \ref{lem: yz_error}. Since Assumptions \ref{aspt: smoothness} and \ref{aspt: stochastic_grad} hold for all $f_i, g_i$, by replacing $y^k, y_*^k, z^k, z_*^k$ with $y_i^k, y_{*,i}^k, z_i^k, z_{*,i}^k$ respectively, we have similar results hold for each $1\leq i\leq n$
   \begin{align}\label{ineq: yz_decrease}
        \begin{aligned}
            &\sum_{k=0}^{K}\alpha_k\E\left[\norm{y_i^k - y_{*,i}^k}^2\right]\leq C_{yx}\sum_{k=0}^{K}\alpha_k \E\left[\norm{x_+^k - x^k}^2\right] + C_{y_i,0} + C_{y,1}\left(\sum_{k=0}^{K}\alpha_k^2\right) \\
        &\sum_{k=0}^{K}\alpha_k\E\left[\norm{z_i^k-z_{*,i}^k}^2\right] \leq C_{zx}\sum_{k=0}^{K}\alpha_k \E\left[\norm{x_+^k - x^k}^2\right] + C_{z_i,0} + C_{z,1}\left(\sum_{k=0}^{K}\alpha_k^2\right).
        \end{aligned}
    \end{align}
    Taking summation on both sides of \eqref{ineq: yz_decrease}, we complete the proof.
\end{proof}
The next lemma shows that the inequalities above will be used in the error analysis of \\
$\norm{\E\left[w^{k+1}\middle|\setF_k\right] - \nabla_1\Phi(x^k, \lambda^k)}$. 
\begin{lemma}\label{lem: hypergrad_error}
    Suppose Assumptions \ref{aspt: smoothness}, \ref{aspt: stochastic_grad} hold for all $f_i, g_i$ and Assumption \ref{aspt: f_bound} holds. We have
    \begin{align*}
        \norm{\E\left[w^{k+1}\middle|\setF_k\right] - \nabla_1\Phi_{\mu_{\lambda}}(x^k, \lambda^k)}^2\leq &\sum_{i=1}^{n}3\lambda_i^k \left\{\left(L_{\nabla f}^2 + L_{\nabla^2 g}^2\right)\norm{y_i^k - y_{*,i}^k}^2 + L_{\nabla g}^2\norm{z_i^k - z_{*,i}^k}^2\right\}, \\
        \norm{\E\left[w^{k+1}\middle|\setF_k\right] - \nabla\Psi(x^k)}^2 \leq &\sum_{i=1}^{n}4\lambda_i^k \left\{\left(L_{\nabla f}^2 + L_{\nabla^2 g}^2\right)\norm{y_i^k - y_{*,i}^k}^2 + L_{\nabla g}^2\norm{z_i^k - z_{*,i}^k}^2\right\} \\
        & + 8nL_{\Phi}^2\left\{\norm{\lambda_+^k - \lambda^k}^2 + \frac{1}{\mu_{\lambda}^2}\norm{h_{\lambda}^k - \nabla_2\Phi_{\mu_{\lambda}}(x^k, \lambda^k)}^2\right\}.
    \end{align*}
\end{lemma}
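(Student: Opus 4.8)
The plan is to reduce both inequalities to the single-task bound already proved in Lemma~\ref{lem: hypergrad_error_simple}, lifted to the multi-objective setting via convexity of $\norm{\cdot}^2$ over the simplex, and then to treat the extra $\nabla\Psi$ term using the smoothness facts of Lemma~\ref{lem: smoothness} together with Lemma~\ref{lem: lambda_convergence}.

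\textbf{First inequality.} First I would identify the conditional mean of $w^{k+1}$. Because $\lambda_i^k$ and $z_i^k$ are $\setF_k$-measurable and the oracles $u_{x,i}^{k+1}, J_i^{k+1}$ are unbiased given $\setF_k$ (Assumption~\ref{aspt: stochastic_grad}, applied to every $f_i, g_i$), one gets $\E[w^{k+1}\mid\setF_k]=\sum_{i=1}^n\lambda_i^k\bigl(\nabla_1 f_i(x^k,y_i^k)-\nabla_{12}^2 g_i(x^k,y_i^k)z_i^k\bigr)$. On the other hand, \eqref{eq: phi_mu_and_phi} together with the hypergradient formula \eqref{eq: hypergrad} of Lemma~\ref{lem: hypergrad} applied to each $\Phi_i$ gives $\nabla_1\Phi_{\mu_{\lambda}}(x^k,\lambda^k)=\sum_{i=1}^n\lambda_i^k\nabla\Phi_i(x^k)=\sum_{i=1}^n\lambda_i^k\bigl(\nabla_1 f_i(x^k,y_{*,i}^k)-\nabla_{12}^2 g_i(x^k,y_{*,i}^k)z_{*,i}^k\bigr)$. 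Subtracting, the difference is a convex combination $\sum_{i=1}^n\lambda_i^k e_i^k$ whose weights sum to one, where $e_i^k$ is exactly the vector decomposed in \eqref{eq: w_exp_decompose_simple} with $(f,g,y^k,y_*^k,z^k,z_*^k)$ replaced by $(f_i,g_i,y_i^k,y_{*,i}^k,z_i^k,z_{*,i}^k)$. Since $\lambda^k\in\Delta_n$, Jensen's inequality yields $\norm{\sum_i\lambda_i^k e_i^k}^2\le\sum_i\lambda_i^k\norm{e_i^k}^2$, and bounding each $\norm{e_i^k}^2$ by the three-term Cauchy--Schwarz split of Lemma~\ref{lem: hypergrad_error_simple} (using Lipschitzness of $\nabla f_i,\nabla^2 g_i$ from Assumption~\ref{aspt: smoothness} and $\norm{z_{*,i}^k}\le L_f/\mu_g$ from \eqref{ineq: z_star_bound}) gives $\norm{e_i^k}^2\le 3\bigl((L_{\nabla f}^2+L_{\nabla^2 g}^2)\norm{y_i^k-y_{*,i}^k}^2+L_{\nabla g}^2\norm{z_i^k-z_{*,i}^k}^2\bigr)$, which is the first claim.

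\textbf{Second inequality.} By Danskin's theorem --- equivalently, by the computation carried out in the proof of Lemma~\ref{lem: smoothness} --- the value function satisfies $\nabla\Psi(x^k)=\nabla_1\Phi_{\mu_{\lambda}}(x^k,\lambda_*^k)$, since $\Phi_{\mu_{\lambda}}(x^k,\cdot)$ is $\mu_{\lambda}$-strongly concave on $\Delta_n$ and hence has a unique maximizer $\lambda_*^k$. I would then write $\E[w^{k+1}\mid\setF_k]-\nabla\Psi(x^k)=\bigl(\E[w^{k+1}\mid\setF_k]-\nabla_1\Phi_{\mu_{\lambda}}(x^k,\lambda^k)\bigr)+\bigl(\nabla_1\Phi_{\mu_{\lambda}}(x^k,\lambda^k)-\nabla_1\Phi_{\mu_{\lambda}}(x^k,\lambda_*^k)\bigr)$ and apply Young's inequality in the form $\norm{a+b}^2\le\tfrac{4}{3}\norm{a}^2+4\norm{b}^2$. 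The $\tfrac{4}{3}$-term, combined with the first inequality, produces the $4\sum_i\lambda_i^k\{\cdots\}$ term. For the $4\norm{b}^2$-term, \eqref{ineq: nabla_phi_mu_lip} gives $\norm{\nabla_1\Phi_{\mu_{\lambda}}(x^k,\lambda^k)-\nabla_1\Phi_{\mu_{\lambda}}(x^k,\lambda_*^k)}^2\le nL_{\Phi}^2\norm{\lambda^k-\lambda_*^k}^2$, and Lemma~\ref{lem: lambda_convergence} (taking $\tau_{\lambda}\mu_{\lambda}=1$, so that $\tau_{\lambda}^{-2}=\mu_{\lambda}^2$) bounds $\norm{\lambda^k-\lambda_*^k}^2\le 2\norm{\lambda_+^k-\lambda^k}^2+\tfrac{2}{\mu_{\lambda}^2}\norm{h_{\lambda}^k-\nabla_2\Phi_{\mu_{\lambda}}(x^k,\lambda^k)}^2$; multiplying through by $4nL_{\Phi}^2$ yields the $8nL_{\Phi}^2\{\norm{\lambda_+^k-\lambda^k}^2+\mu_{\lambda}^{-2}\norm{h_{\lambda}^k-\nabla_2\Phi_{\mu_{\lambda}}(x^k,\lambda^k)}^2\}$ term, finishing the proof.

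The conditional-expectation identity and the per-task Cauchy--Schwarz bookkeeping are routine, essentially copied from Lemma~\ref{lem: hypergrad_error_simple}. The parts that need care are: (i) the Jensen step over the simplex weights, which is what keeps the weights $\lambda_i^k$ inside the final bound rather than a cruder factor of $n$; (ii) the identity $\nabla\Psi(x^k)=\nabla_1\Phi_{\mu_{\lambda}}(x^k,\lambda_*^k)$, which should be argued via Danskin's theorem rather than a naive implicit-function computation, since the maximizer $\lambda_*^k$ may lie on the boundary of $\Delta_n$; and (iii) tracking the Young's-inequality constant so the coefficient lands exactly at $4$ and, after substituting $\tau_{\lambda}\mu_{\lambda}=1$, the $\mu_{\lambda}$-dependence matches the stated bound.
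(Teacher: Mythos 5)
Your proposal is correct and follows essentially the same route as the paper: the same per-task decomposition of $\E\left[w^{k+1}\middle|\setF_k\right]-\nabla_1\Phi_{\mu_{\lambda}}(x^k,\lambda^k)$ with Jensen over $\lambda^k\in\Delta_n$ and the three-term Cauchy--Schwarz split of Lemma~\ref{lem: hypergrad_error_simple}, then the identity $\nabla\Psi(x^k)=\nabla_1\Phi_{\mu_{\lambda}}(x^k,\lambda_*^k)$, the Lipschitz bound \eqref{ineq: nabla_phi_mu_lip}, and Lemma~\ref{lem: lambda_convergence} for the second inequality. The only differences are cosmetic (Jensen before rather than after the Cauchy--Schwarz split, and Young's inequality with constants $\tfrac{4}{3}$ and $4$ in place of the paper's flat four-way split), and your explicit appeal to Danskin's theorem for $\nabla\Psi(x^k)=\nabla_1\Phi_{\mu_{\lambda}}(x^k,\lambda_*^k)$ is, if anything, a more careful justification than the paper's implicit use of it.
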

\begin{proof}
Note that we have the following decomposition:
\begin{align}\label{eq: w_exp_decompose}
    \begin{aligned}
        &\E\left[w^{k+1}\middle|\setF_k\right] - \nabla_1\Phi_{\mu_{\lambda}}(x^k, \lambda^k) \\
        = &\E\left[u_x^{k+1}\middle|\setF_k\right] - \sum_{i=1}^{n}\lambda_i^k\nabla_1 f_i(x^k, y_{*,i}^k) - \sum_{i=1}^{n}\lambda_i^k \left(\E\left[J_i^{k+1}\middle|\setF_k\right]z_i^k - \nabla_{12}^2g_i(x^k,y_{*,i}^k)z_{*,i}^k\right) \\
        = &\sum_{i=1}^{n}\lambda_i^k\bigg\{\nabla_1 f_i(x^k, y_i^k) - \nabla_1 f_i(x^k, y_{*,i}^k) -  \nabla_{12}^2g_i(x^k, y_i^k)\left( z_i^k - z_{*,i}^k\right)\\
        & \hspace{12em} - \left[\nabla_{12}^2g_i(x^k, y_i^k) - \nabla_{12}^2g_i(x^k,y_{*,i}^k)\right]z_{*,i}^k\bigg\}.
    \end{aligned}
\end{align}
which, together with Cauchy-Schwarz inequality, implies
\begin{align*}
    &\norm{\E\left[w^{k+1}\middle|\setF_k\right] - \nabla_1\Phi_{\mu_{\lambda}}(x^k, \lambda^k)}^2 \\
    \leq &3\norm{\sum_{i=1}^{n}\lambda_i^k \left(\nabla_1 f_i(x^k, y_i^k) - \nabla_1 f_i(x^k, y_{*,i}^k)\right)}^2 + 3\norm{\sum_{i=1}^{n}\lambda_i^k\nabla_{12}^2g_i(x^k, y_i^k)\left( z_i^k - z_{*,i}^k\right)}^2\\
    & + 3\norm{\sum_{i=1}^{n}\left(\nabla_{12}^2g_i(x^k, y_i^k) - \nabla_{12}^2g_i(x^k,y_{*,i}^k)\right)z_{*,i}^k}^2\\
    \leq &\sum_{i=1}^{n}3\lambda_i^k \left(\left(L_{\nabla f}^2 + L_{\nabla^2 g}^2\right)\norm{y_i^k - y_{*,i}^k}^2 + L_{\nabla g}^2\norm{z_i^k - z_{*,i}^k}^2\right).
\end{align*}
Similarly we have
\[
    \E\left[w^{k+1}\middle|\setF_k\right] - \nabla\Psi(x^k) = \E\left[w^{k+1}\middle|\setF_k\right] - \nabla_1\Phi_{\mu_{\lambda}}(x^k, \lambda^k) + \nabla_1\Phi_{\mu_{\lambda}}(x^k, \lambda^k) - \nabla_1\Phi_{\mu_{\lambda}}(x^k, \lambda_*^k).
\]
Applying Cauchy-Schwarz inequality, Assumption \ref{aspt: smoothness} and Lemma \ref{lem: smoothness} to the above equation and \eqref{eq: w_exp_decompose}, we know

\begin{align*}
    &\norm{\E\left[w^{k+1}\middle|\setF_k\right] - \nabla\Psi(x^k)}^2 \\
    \leq &4\norm{\sum_{i=1}^{n}\lambda_i^k \left(\nabla_1 f_i(x^k, y_i^k) - \nabla_1 f_i(x^k, y_{*,i}^k)\right)}^2 + 4\norm{\sum_{i=1}^{n}\lambda_i^k\nabla_{12}^2g_i(x^k, y_i^k)\left( z_i^k - z_{*,i}^k\right)}^2\\
    & + 4\norm{\sum_{i=1}^{n}\left(\nabla_{12}^2g_i(x^k, y_i^k) - \nabla_{12}^2g_i(x^k,y_{*,i}^k)\right)z_{*,i}^k}^2  + 4\norm{\nabla_1\Phi(x^k,\lambda^k) - \nabla_1\Phi(x^k,\lambda_*^k)}^2\\
    \leq &\sum_{i=1}^{n}4\lambda_i^k \left\{\left(L_{\nabla f}^2 + L_{\nabla^2 g}^2\right)\norm{y_i^k - y_{*,i}^k}^2 + L_{\nabla g}^2\norm{z_i^k - z_{*,i}^k}^2\right\} + 4nL_{\Phi}^2\norm{\lambda^k-\lambda_*^k}^2,
\end{align*}
which together with Lemma \ref{lem: lambda_convergence} completes the proof.
\end{proof}
\subsubsection{Primal Convergence}
\begin{lemma}\label{lem: primal}
    Suppose Assumptions \ref{aspt: smoothness}, \ref{aspt: stochastic_grad} hold for all $f_i, g_i$ and Assumption \ref{aspt: f_bound} holds. If
    \begin{align}
         &\alpha_k\leq \min\left(\frac{\tau_x^2}{20c_3},\ \frac{c_3}{2\tau_x\left(c_3L_{\nabla\Phi} + L_{\nabla\eta_{\cX}}\right)},\ \frac{c_3}{4\tau_{\lambda}(L_{\nabla\eta_{\Delta_n}} +c_3\mu_{\lambda})},\ \frac{n\tau_{\lambda}L_{\Phi}^2}{L_{\Psi} + L_{\nabla\Phi}},\ 1\right),\notag \\
        &\tau_x < 1,\ \tau_{\lambda} = \frac{1}{\mu_{\lambda}},\ c_3 \leq \min\left(\frac{1}{10},\ \frac{1}{8(\mu_{\lambda} + 1)^2}\right),\label{ineq: conditions_primal}
    \end{align}
    then in Algorithm \ref{algo: momasoba} we have
    \begin{align}
        &\sum_{k=0}^{K}\frac{\alpha_k}{\tau_x^2}\E\left[\norm{x_+^k-x^k}^2\right] \notag\\
            \leq &\frac{2}{\tau_x}\E\left[\tilde W_{0,1}^{(1)}\right] + 2\sum_{k=0}^{K}\alpha_k\E\left[\norm{\E\left[w^{k+1}\middle|\setF_k\right] - \nabla\Psi(x^k)}^2\right]  \notag\\
            & + \sum_{k=0}^{K}\alpha_k\E\left[\norm{\E\left[w^{k+1}\middle|\setF_k\right] -\nabla_1\Phi_{\mu_{\lambda}}(x^k,\lambda^k)}^2\right] +\frac{1}{2}\sum_{k=0}^{K}\alpha_k\E\left[\norm{h_x^k - \nabla_1\Phi_{\mu_{\lambda}}(x^k,\lambda^k)}^2\right] \notag\\
            & + \sigma_{g,2}^2\sum_{k=0}^{K}\alpha_k^2\E\left[\sum_{i=1}^{n}\lambda_i^k\norm{z_i^k - z_{*,i}^k}^2\right] + \sigma_w^2\sum_{k=0}^{K}\alpha_k^2, \notag\\
            &\sum_{k=0}^{K}\frac{\alpha_k}{\tau_{\lambda}^2}\E\left[\norm{\lambda_+^k - \lambda^k}^2\right] \notag\\
            \leq &\frac{2}{\tau_{\lambda}}\E\left[\tilde W_{0,1}^{(2)}\right] + \frac{1}{2}\sum_{k=0}^{K}\alpha_k\E\left[\norm{h_{\lambda}^k - \nabla_2\Phi_{\mu_{\lambda}}(x^k, \lambda^k)}^2\right] + 4L_f^2\sum_{k=0}^{K}\alpha_k\E\left[\sum_{i=1}^{n}\norm{y_i^k-y_{*,i}^k}^2\right] \notag \\
         & + 13nL_{\Phi}^2\sum_{k=0}^{K}\alpha_k\E\left[\norm{x_+^k - x^k}^2\right] + n\sigma_{f,0}^2\sum_{k=0}^{K}\alpha_k^2. \label{ineq: primal_decrease}
    \end{align}
\end{lemma}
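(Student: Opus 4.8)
The plan is to adapt the argument behind Lemma~\ref{lem: primal_simple} to the two primal variables $x^k$ and $\lambda^k$ of Algorithm~\ref{algo: momasoba}, with the extra coupling entering through the gap $\Psi(x^k)-\Phi_{\mu_{\lambda}}(x^k,\lambda^k)$. Throughout I would use $x^{k+1}-x^k=\alpha_k(x_+^k-x^k)$, $\lambda^{k+1}-\lambda^k=\alpha_k(\lambda_+^k-\lambda^k)$, the moving-average increments $h_x^{k+1}-h_x^k=\theta_k(w^{k+1}-h_x^k)$ and $h_\lambda^{k+1}-h_\lambda^k=\theta_k\big(s^{k+1}-\mu_{\lambda}(\lambda^k-\tfrac{\bfone_n}{n})-h_\lambda^k\big)$, the smoothness constants from Lemma~\ref{lem: smoothness}, and the $\eta$-properties from Lemma~\ref{lem: eta_property}.

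\textbf{The $x$-bound.} I would replay the computation leading to \eqref{ineq: primal_x_simple} with $\Phi$ replaced by $\Psi$ (using its $L_{\nabla\Psi}$-smoothness) and $(h,\tau)$ replaced by $(h_x,\tau_x)$, tracking the merit piece $\tilde W_{k,1}^{(2)}=\Psi(x^k)-\inf_{x\in\cX}\Psi(x)-\tfrac1{c_3}\eta_{\cX}(x^k,h_x^k,\tau_x)\ge0$ (the $\eta_{\cX}$-based component): apply the descent lemma to $\Psi$, apply the $L_{\nabla\eta_{\cX}}$-smoothness of $\eta_{\cX}$ and its optimality condition \eqref{ineq: eta_opt_condition} at $d=x^k$ to $\eta_{\cX}(x^k,h_x^k,\tau_x)-\eta_{\cX}(x^{k+1},h_x^{k+1},\tau_x)$, and take (smoothness descent of $\Psi$) plus $\tfrac1{c_3}$ times (the $\eta_{\cX}$ increment), exactly as in the derivation of \eqref{ineq: primal_x_simple}. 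The one genuinely new feature is that $\E[w^{k+1}\mid\setF_k]$ approximates $\nabla_1\Phi_{\mu_{\lambda}}(x^k,\lambda^k)$, not $\nabla\Psi(x^k)$, so the residual inner product is $\alpha_k\langle\nabla\Psi(x^k)-\E[w^{k+1}\mid\setF_k],x_+^k-x^k\rangle$; splitting it with Young's inequality produces the $\E\|\E[w^{k+1}\mid\setF_k]-\nabla\Psi(x^k)\|^2$ term of the claim. Bounding $\E\|h_x^{k+1}-h_x^k\|^2\le\sigma_{h_x,k}^2$ via Lemma~\ref{lem: variance} supplies the $\|h_x^k-\nabla_1\Phi_{\mu_{\lambda}}(x^k,\lambda^k)\|^2$, $\|\E[w^{k+1}\mid\setF_k]-\nabla_1\Phi_{\mu_{\lambda}}(x^k,\lambda^k)\|^2$, and stochastic-remainder terms $\sigma_{g,2}^2\alpha_k^2\sum_i\lambda_i^k\|z_i^k-z_{*,i}^k\|^2$ and $\sigma_w^2\alpha_k^2$. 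The parameter conditions \eqref{ineq: conditions_primal} (the bound $\alpha_k\le\tau_x^2/(20c_3)$, the bound involving $c_3L_{\nabla\Phi}+L_{\nabla\eta_{\cX}}$, $\tau_x<1$, $c_3\le1/10$) are precisely what makes a constant fraction of $\tfrac{\alpha_k}{\tau_x}\|x_+^k-x^k\|^2$ survive after absorbing the $\tfrac1{4\tau_x}$-type quadratics; telescoping $\tilde W_{k,1}^{(2)}$ and rescaling by a constant of order $1/\tau_x$ gives the first inequality.

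\textbf{The $\lambda$-bound.} Here I would track $\tilde W_{k,1}^{(1)}=\Psi(x^k)-\Phi_{\mu_{\lambda}}(x^k,\lambda^k)-\tfrac1{c_3}\eta_{\Delta_n}(\lambda^k,-h_\lambda^k,\tau_\lambda)\ge0$ (the $\eta_{\Delta_n}$-based component) and bound its one-step change by three contributions. First, $\Psi(x^{k+1})-\Psi(x^k)$ is bounded above by the descent lemma for the $L_{\nabla\Psi}$-smooth $\Psi$, producing $\alpha_k\langle\nabla\Psi(x^k),x_+^k-x^k\rangle+\cO(\alpha_k^2\|x_+^k-x^k\|^2)$. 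Second, $-\big(\Phi_{\mu_{\lambda}}(x^{k+1},\lambda^{k+1})-\Phi_{\mu_{\lambda}}(x^k,\lambda^k)\big)$ is split into a change in $x$ at fixed $\lambda^{k+1}$ and a change in $\lambda$ at fixed $x^k$: the former is lower-bounded by the descent lemma with constant $L_{\nabla\Phi}$, contributing $-\alpha_k\langle\nabla_1\Phi_{\mu_{\lambda}}(x^k,\lambda^{k+1}),x_+^k-x^k\rangle$, and the latter by the descent lemma with constant $\mu_{\lambda}$ (smoothness of $\Phi_{\mu_{\lambda}}(x^k,\cdot)$) together with the projection inequality for $\lambda_+^k=\Pi_{\Delta_n}(\lambda^k+\tau_\lambda h_\lambda^k)$, which yields $-\tfrac{\alpha_k}{2\tau_\lambda}\|\lambda_+^k-\lambda^k\|^2+\tfrac{\alpha_k\tau_\lambda}{2}\|h_\lambda^k-\nabla_2\Phi_{\mu_{\lambda}}(x^k,\lambda^k)\|^2+\cO(\mu_{\lambda}\alpha_k^2\|\lambda_+^k-\lambda^k\|^2)$. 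Third, the $\eta_{\Delta_n}$ increment is handled as in \eqref{ineq: eta_x_decrease_simple} with the role of $h$ played by $-h_\lambda^k$, the constraint set $\Delta_n$, and \eqref{ineq: eta_opt_condition} at $d=\lambda^k$. Combining, the surviving inner product is $\alpha_k\langle\nabla\Psi(x^k)-\nabla_1\Phi_{\mu_{\lambda}}(x^k,\lambda^{k+1}),x_+^k-x^k\rangle$, which I would estimate by Young's inequality together with $\|\nabla\Psi(x^k)-\nabla_1\Phi_{\mu_{\lambda}}(x^k,\lambda^{k+1})\|\le\sqrt n L_\Phi\big(\|\lambda^k-\lambda_*^k\|+\alpha_k\|\lambda_+^k-\lambda^k\|\big)$, using $\nabla\Psi(x^k)=\nabla_1\Phi_{\mu_{\lambda}}(x^k,\lambda_*^k)$ (Lemma~\ref{lem: smoothness}, \eqref{eq: phi_mu_and_phi}), and then Lemma~\ref{lem: lambda_convergence} with $\tau_\lambda=1/\mu_{\lambda}$ to convert $\|\lambda^k-\lambda_*^k\|^2$ into a multiple of $\|\lambda_+^k-\lambda^k\|^2$ and $\|h_\lambda^k-\nabla_2\Phi_{\mu_{\lambda}}(x^k,\lambda^k)\|^2$. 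The re-appearing $\|\lambda_+^k-\lambda^k\|^2$ is absorbed into the negative quadratic $-\tfrac{\alpha_k}{2\tau_\lambda}\|\lambda_+^k-\lambda^k\|^2$, which is exactly the step forcing $\tau_\lambda=1/\mu_{\lambda}$ and $c_3\le1/(8(\mu_{\lambda}+1)^2)$. Finally, the bias of $h_\lambda$ enters via $\E[s_i^{k+1}\mid\setF_k]=f_i(x^k,y_i^k)$ versus $\Phi_i(x^k)=f_i(x^k,y_{*,i}^k)$, so Assumption~\ref{aspt: f_bound} bounds the mismatch by $L_f^2\sum_i\|y_i^k-y_{*,i}^k\|^2$ (giving the $4L_f^2$ coefficient after Young), while $\E\|h_\lambda^{k+1}-h_\lambda^k\|^2\le\sigma_{h_\lambda,k}^2$ (Lemma~\ref{lem: variance}) supplies $\tfrac12\|h_\lambda^k-\nabla_2\Phi_{\mu_{\lambda}}(x^k,\lambda^k)\|^2$ and $n\sigma_{f,0}^2\alpha_k^2$; the $\sqrt n$ and $n$ factors from Lemma~\ref{lem: smoothness} accumulate into $13nL_\Phi^2\|x_+^k-x^k\|^2$. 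Telescoping $\tilde W_{k,1}^{(1)}$ and rescaling gives the second inequality.

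\textbf{Expected main obstacle.} The delicate part is the $\lambda$-bound: a single negative quadratic, coming jointly from the $\eta_{\Delta_n}$-descent and the concavity of $\Phi_{\mu_{\lambda}}(x^k,\cdot)$, must absorb both the quadratic left over after Young-splitting the gap cross term and the $\|\lambda_+^k-\lambda^k\|^2$ re-introduced through Lemma~\ref{lem: lambda_convergence}, all while keeping the $n$- and $\mu_{\lambda}$-dependence tight enough to land the stated constants --- which is precisely what dictates the scaling choices ($\tau_\lambda=1/\mu_{\lambda}$, $c_3\lesssim(\mu_{\lambda}+1)^{-2}$, the step-size caps) in \eqref{ineq: conditions_primal}.
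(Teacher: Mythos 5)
Your $x$-bound is essentially the paper's own argument: replay the single-objective computation with $\Phi\to\Psi$, $h\to h_x$, combine the descent lemma for $\Psi$ with $\tfrac1{c_3}$ times the $\eta_{\cX}$-increment bounded via \eqref{ineq: eta_opt_condition} at $d=x^k$, split the residual $\alpha_k\langle\nabla\Psi(x^k)-\E[w^{k+1}\mid\setF_k],x_+^k-x^k\rangle$ by Young, and control $\E\|h_x^{k+1}-h_x^k\|^2$ by Lemma \ref{lem: variance}; this reproduces the stated terms. (Your assignment of the merit pieces — the $\eta_{\cX}$-based component for $x$ and the $\eta_{\Delta_n}$-based one for $\lambda$ — is the correct one; the paper's superscripts $(1)/(2)$ are internally swapped relative to \eqref{eq: merit_tilde_W}, which is a defect of the paper, not of your plan.)

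The $\lambda$-bound as you sketch it has a genuine accounting gap: you generate the negative quadratic twice and thereby lose the cancellation that the argument actually needs. Handling the $\eta_{\Delta_n}$-increment "as in \eqref{ineq: eta_x_decrease_simple}" unavoidably produces, besides $-\tfrac{\theta_k}{\tau_\lambda}\|\lambda_+^k-\lambda^k\|^2$, the inner product $\theta_k\langle s^{k+1}-\mu_\lambda(\lambda^k-\tfrac{\bfone_n}{n}),\lambda_+^k-\lambda^k\rangle$, because $h_\lambda^{k+1}-h_\lambda^k=\theta_k\big(s^{k+1}-\mu_\lambda(\lambda^k-\tfrac{\bfone_n}{n})-h_\lambda^k\big)$. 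In the paper's proof this term cancels, in conditional expectation, against the linear-in-$\lambda$ part of the exact expansion of $\Phi_{\mu_\lambda}(x^k,\lambda^k)-\Phi_{\mu_\lambda}(x^{k+1},\lambda^{k+1})$, leaving only the $y$-bias $\alpha_k\langle\lambda^k-\lambda_+^k,\Phi^k-\E[s^{k+1}\mid\setF_k]\rangle$ (then Assumption \ref{aspt: f_bound} gives the $L_f^2\sum_i\|y_i^k-y_{*,i}^k\|^2$ term), and the single negative quadratic comes from \eqref{ineq: eta_opt_condition}. You instead spend the $\Phi_{\mu_\lambda}(x^k,\cdot)$ linear term on the projection inequality for $\lambda_+^k$ to manufacture $-\tfrac{\alpha_k}{2\tau_\lambda}\|\lambda_+^k-\lambda^k\|^2+\tfrac{\alpha_k\tau_\lambda}{2}\|h_\lambda^k-\nabla_2\Phi_{\mu_\lambda}(x^k,\lambda^k)\|^2$, which forecloses that cancellation: the $\eta$-side term $\alpha_k\langle\E[s^{k+1}\mid\setF_k]-\mu_\lambda(\lambda^k-\tfrac{\bfone_n}{n}),\lambda_+^k-\lambda^k\rangle\approx\alpha_k\langle\nabla_2\Phi_{\mu_\lambda}(x^k,\lambda^k),\lambda_+^k-\lambda^k\rangle$ is then unaccounted for, and since $\nabla_2\Phi_{\mu_\lambda}$ is merely bounded (order $\sqrt{n}\,b_\Phi$), Young or Cauchy--Schwarz turns it into an $\cO(\alpha_k)$ rather than $\cO(\alpha_k^2)$ remainder, whose sum grows like $\sum_k\alpha_k$ and breaks \eqref{ineq: primal_decrease}. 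The repair is simply to drop the projection-inequality step and let the two $\lambda$-direction linear terms cancel (the paper's route); the rest of your sketch — $\nabla\Psi(x^k)=\nabla_1\Phi_{\mu_\lambda}(x^k,\lambda_*^k)$ with $\sqrt{n}L_\Phi$-Lipschitzness plus Lemma \ref{lem: lambda_convergence} for the cross term, and Lemma \ref{lem: variance} with $c_3\le\tfrac{1}{8(\mu_\lambda+1)^2}$ for $\E\|h_\lambda^{k+1}-h_\lambda^k\|^2$ — coincides with the paper.
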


\begin{proof}
The proof of the first inequality in \eqref{ineq: primal_decrease} is almost the same as that in \eqref{lem: primal_simple}. Note that by replacing $\Phi, h^k, W_{k,1}$ with $\Psi, h_x^k, \tilde W_{k,1}$, we know
\begin{align}\label{ineq: primal_x}
    \begin{aligned}
           &\frac{\alpha_k}{\tau_x}\norm{x_+^k-x^k}^2  \\
        \leq & \tilde W_{k,1}^{(1)} - \E\left[\tilde W_{k+1,1}^{(1)}\middle|\setF_k\right] + \alpha_k\left(\tau_x\norm{\nabla\Psi(x^k) - \E\left[w^{k+1}\middle|\setF_k\right]}^2 + \frac{1}{4\tau_x}\norm{ x_+^k - x^k}^2\right) \\
        & + \frac{\alpha_k}{4\tau_x}\norm{x_+^k-x^k}^2 + \frac{5}{2c_3\tau_x}\E\left[\norm{h_x^{k+1} - h_x^k}^2\middle|\setF_k\right],
    \end{aligned}
\end{align}
Similar to \eqref{ineq: h_term_simple}, from \eqref{ineq: h_var} we have that
\begin{align}\label{ineq: h_term}
    \begin{aligned}
        &\frac{5}{c_3\tau_x^2}\E\left[\norm{h_x^{k+1} - h_x^k}^2\right]  \\
        \leq &\frac{10c_3\alpha_k^2}{\tau_x^2}\E\left[\norm{h_x^k - \nabla_1\Phi_{\mu_{\lambda}}(x^k,\lambda^k)}^2 + \norm{\E\left[w^{k+1}\middle|\setF_k\right] -\nabla_1\Phi_{\mu_{\lambda}}(x^k,\lambda^k)}^2\right] + \frac{5c_3\alpha_k^2}{\tau_x^2}\sigma_w^2 \\
        & + \frac{10c_3\alpha_k^2\sigma_{g,2}^2}{\tau_x^2}\E\left[\sum_{i=1}^{n}\lambda_i^k\norm{z_i^k - z_{*,i}^k}^2\right]. \\
    \leq &\frac{\alpha_k}{2}\E\left[\norm{h_x^k - \nabla_1\Phi_{\mu_{\lambda}}(x^k,\lambda^k)}^2\right] + \alpha_k\E\left[\norm{\E\left[w^{k+1}\middle|\setF_k\right] -\nabla_1\Phi_{\mu_{\lambda}}(x^k,\lambda^k)}^2\right] + \alpha_k^2\sigma_w^2 \\
    & + \alpha_k^2\sigma_{g,2}^2\E\left[\sum_{i=1}^{n}\lambda_i^k\norm{z_i^k - z_{*,i}^k}^2\right], 
    \end{aligned}
\end{align}
where the second inequality uses \eqref{ineq: conditions_primal}. Taking summation and expectation on both sides of \eqref{ineq: primal_x} and using \eqref{ineq: h_term}, we obtain the first inequality in \eqref{ineq: primal_decrease}.
For the second inequality in \eqref{ineq: primal_decrease}, the $L_{\nabla \Psi}$-smoothness of $\Psi(x)$ and $L_{\nabla\eta_{\cX}}$-smoothness of $\eta_{\cX}$ in Lemma \ref{lem: smoothness} imply
\begin{align}\label{ineq: phi_decrease}
    \begin{aligned}
        \Psi(x^{k+1}) - \Psi(x^k)\leq\alpha_k\<\nabla \Psi(x^k), x_+^k - x^k> + \frac{L_{\nabla \Psi}}{2}\norm{x^{k+1}-x^k}^2,
    \end{aligned}
\end{align}
and
\begin{align}
        &\eta_{\Delta_n}(\lambda^k,-h_{\lambda}^k,\tau_{\lambda}) - \eta_{\Delta_n}(\lambda^{k+1},-h_{\lambda}^{k+1},\tau_{\lambda})\notag\\
        \leq & \<h_{\lambda}^k + \frac{1}{\tau_{\lambda}}(\lambda^k - \lambda_+^k), \lambda^k - \lambda^{k+1}> + \<\lambda_+^k - \lambda^k, -h_{\lambda}^k + h_{\lambda}^{k+1}>  + \frac{L_{\nabla\eta_{\Delta_n}}}{2}\left(\norm{\lambda^{k+1}-\lambda^k}^2 + \norm{-h_{\lambda}^{k+1} + h_{\lambda}^k}^2\right) \notag\\
        = &\alpha_k\<-h_{\lambda}^k, \lambda_+^k - \lambda^k> + \frac{\alpha_k}{\tau_{\lambda}}\norm{\lambda_+^k - \lambda^k}^2 + \theta_k\<\lambda_+^k - \lambda^k, s^{k+1} -h_{\lambda}^k - \mu_{\lambda}\left(\lambda^k - \frac{\bfone_n}{n}\right)> \notag \\
        & +  \frac{L_{\nabla\eta_{\Delta_n}}}{2}\left(\norm{\lambda^{k+1}-\lambda^k}^2 + \norm{h_{\lambda}^{k+1} - h_{\lambda}^k}^2\right) \notag \\
    \leq &- \frac{\theta_k}{\tau_{\lambda}}\norm{\lambda_+^k - \lambda^k}^2 + \theta_k\<s^{k+1} - \mu_{\lambda}\left(\lambda^k - \frac{\bfone_n}{n}\right), \lambda_+^k-\lambda^k> + \frac{L_{\nabla\eta_{\Delta_n}}}{2}\left(\norm{\lambda^{k+1}-\lambda^k}^2 + \norm{h_{\lambda}^{k+1} - h_{\lambda}^k}^2\right).\label{ineq: eta_lam_decrease_exp}
\end{align}
We also have
\begin{align}
        &\Phi_{\mu_{\lambda}}(x^k,\lambda^k) - \Phi_{\mu_{\lambda}}(x^{k+1}, \lambda^{k+1})\\
        =&\sum_{i=1}^{n}\left(\lambda_i^k\Phi_i(x^k) - \lambda_i^{k+1}\Phi_i(x^{k+1})\right) + \frac{\mu_{\lambda}}{2}\norm{\lambda^{k+1} - \frac{\bfone_n}{n}}^2 - \frac{\mu_{\lambda}}{2}\norm{\lambda^k - \frac{\bfone_n}{n}}^2\notag\\
        =&\<\lambda^k, \Phi^k> - \<\lambda^{k+1}, \Phi^{k+1}> + \frac{\mu_{\lambda}}{2}\left(\norm{\lambda^{k+1} - \lambda^k + \lambda^k - \frac{\bfone_n}{n}}^2 - \norm{\lambda^k - \frac{\bfone_n}{n}}^2\right)\notag\\
        =&\<\lambda^k - \lambda^{k+1}, \Phi^k> + \<\lambda^{k+1}, \Phi^k - \Phi^{k+1}>  + \mu_{\lambda}\alpha_k\<\lambda^k - \frac{\bfone_n}{n}, \lambda_+^k-\lambda^k> + \frac{\mu_{\lambda}}{2}\norm{\lambda^{k+1}-\lambda^k}^2\notag\\
        =&\alpha_k\<\lambda^k - \lambda_+^k, \E\left[s^{k+1}\middle|\setF_k\right] - \mu_{\lambda}\left(\lambda^k - \frac{\bfone_n}{n}\right)> + \alpha_k\<\lambda^k - \lambda_+^k, \Phi^{k} - \E\left[s^{k+1}\middle|\setF_k\right]> \notag\\
        & + \frac{\mu_{\lambda}}{2}\norm{\lambda^{k+1}-\lambda^k}^2 + \<\lambda^{k+1}, \Phi^k - \Phi^{k+1}> \notag\\
        \leq &\alpha_k\<\lambda^k - \lambda_+^k, \E\left[s^{k+1}\middle|\setF_k\right] - \mu_{\lambda}\left(\lambda^k - \frac{\bfone_n}{n}\right)> + \alpha_k\<\lambda^k - \lambda_+^k, \Phi^{k} - \E\left[s^{k+1}\middle|\setF_k\right]> \notag\\
        & + \frac{\mu_{\lambda}}{2}\norm{\lambda^{k+1}-\lambda^k}^2-\alpha_k\<\nabla_1\Phi(x^k, \lambda^k), x_+^k - x^k> + \sqrt{n}L_{\Phi}\norm{\lambda^{k+1} - \lambda^k}\norm{x_+^k - x^k} \notag\\
        & + \frac{L_{\nabla \Phi}}{2}\norm{x^{k+1} - x^k}^2.\label{eq: F_decrease}
\end{align}
where the inequality uses Lemma \ref{lem: smoothness} and $(c)$ in Assumption \ref{aspt: smoothness} to obtain
\begin{align*}
    &\<\lambda^{k+1}, \Phi^k - \Phi^{k+1}> \\
    =&\sum_{i=1}^{n}\lambda_i^{k+1}(\Phi_i(x^k) - \Phi_i(x^{k+1})) \\
        \leq & \sum_{i=1}^{n}\lambda_i^{k+1}\left(\<\nabla \Phi_i(x^k), x^k - x^{k+1}> + \frac{L_{\nabla \Phi}}{2}\norm{x^k - x^{k+1}}^2\right) \\
        = &-\alpha_k\<\nabla_1\Phi(x^k, \lambda^{k+1}), x_+^k - x^k> + \frac{L_{\nabla \Phi}}{2}\norm{x^{k+1} - x^k}^2 \\
        \leq & -\alpha_k\<\nabla_1\Phi(x^k, \lambda^k), x_+^k - x^k> + \sqrt{n}L_{\Phi}\norm{\lambda^{k+1} - \lambda^k}\norm{x_+^k - x^k} + \frac{L_{\nabla \Phi}}{2}\norm{x^{k+1} - x^k}^2.
\end{align*}
Taking conditional expectation with respect to $\setF_k$ on $\eqref{ineq: phi_decrease} + \eqref{ineq: eta_lam_decrease_exp} / c_3 + \eqref{eq: F_decrease}$,  we know
\begin{align}
        &\frac{\alpha_k}{\tau_{\lambda}}\norm{\lambda_+^k - \lambda^k}^2\notag\\
         \leq &\tilde W_{k,1}^{(2)} - \E\left[\tilde W_{k+1,1}^{(2)}\middle|\setF_k\right] + \alpha_k\<\nabla \Psi(x^k) - \nabla_1\Phi(x^k, \lambda^k), x_+^k - x^k>\notag \\
         & + \alpha_k\<\lambda^k - \lambda_+^k, \Phi^k - \E\left[s^{k+1}\middle|\setF_k\right]> + \frac{(L_{\nabla\Psi} + L_{\nabla \Phi})}{2}\norm{x^{k+1} - x^k}^2\notag \\
         & + \frac{(L_{\nabla\eta_{\Delta_n}} +c_3\mu_{\lambda})}{2c_3}\norm{\lambda^{k+1}-\lambda^k}^2 +  \sqrt{n}L_{\Phi}\norm{\lambda^{k+1} - \lambda^k}\norm{x_+^k - x^k} + \frac{L_{\nabla\eta_{\Delta_n}}}{2c_3}\E\left[\norm{h_{\lambda}^{k+1} - h_{\lambda}^k}^2\middle|\setF_k\right]\notag \\
         \leq & \tilde W_{k,1}^{(2)} - \E\left[\tilde W_{k+1,1}^{(2)}\middle|\setF_k\right] + \alpha_k\sqrt{n}L_{\Phi}\norm{\lambda^k-\lambda_*^k}\norm{x_+^k - x^k} \notag\\
         & + \alpha_kL_f\norm{\lambda_+^k - \lambda^k}\left(\sum_{i=1}^{n}\norm{y_i^k-y_{*,i}^k}^2\right)^{\frac{1}{2}} + \alpha_k\sqrt{n}L_{\Phi}\norm{\lambda_+^k - \lambda^k}\norm{x_+^k - x^k} \notag\\
         & + \frac{\alpha_k^2(L_{\nabla\Psi} + L_{\nabla \Phi})}{2}\norm{x_+^k - x^k}^2 + \frac{\alpha_k^2(L_{\nabla\eta_{\Delta_n}} +c_3\mu_{\lambda})}{2c_3}\norm{\lambda_+^k-\lambda^k}^2 + \frac{L_{\nabla\eta_{\Delta_n}}}{2c_3}\E\left[\norm{h_{\lambda}^{k+1} - h_{\lambda}^k}^2\middle|\setF_k\right] \notag\\
         \leq &\tilde W_{k,1}^{(2)} - \E\left[\tilde W_{k+1,1}^{(2)}\middle|\setF_k\right] + \alpha_k\left(\frac{1}{16\tau_{\lambda}}\norm{\lambda^k-\lambda_*^k}^2 + 4n\tau_{\lambda}L_{\Phi}^2\norm{x_+^k - x^k}^2 \right) \notag\\
         & + \alpha_k\left(\frac{1}{8\tau_{\lambda}}\norm{\lambda_+^k - \lambda^k}^2 + 2\tau_{\lambda}L_f^2\sum_{i=1}^{n}\norm{y_i^k-y_{*,i}^k}^2\right) + \alpha_k\left(\frac{1}{8\tau_{\lambda}}\norm{\lambda_+^k - \lambda^k}^2 + 2n\tau_{\lambda}L_{\Phi}^2\norm{x_+^k - x^k}^2\right) \notag\\
         & + \frac{\alpha_kn\tau_{\lambda}L_{\Phi}^2}{2}\norm{x_+^k - x^k}^2 + \frac{\alpha_k}{8\tau_{\lambda}}\norm{\lambda_+^k-\lambda^k}^2 
         + \frac{L_{\nabla\eta_{\Delta_n}}}{2c_3}\E\left[\norm{h_{\lambda}^{k+1} - h_{\lambda}^k}^2\middle|\setF_k\right],\label{ineq: lam_primal_long}
\end{align}
where the second inequality uses Lemma \ref{lem: smoothness}, and the third inequality uses Young's inequality and the conditions on $\alpha_k$ (see \eqref{ineq: conditions_primal}):
\[
    \frac{\alpha_k}{8\tau_{\lambda}} - \frac{\alpha_k^2(L_{\nabla\eta_{\Delta_n}} +c_3\mu_{\lambda})}{2c_3}\geq 0,\ \alpha_k^2(L_{\nabla\Psi} + L_{\nabla\Phi})\leq \alpha_kn\tau_{\lambda}L_{\Phi}^2.
\]
Recall that in Lemma \ref{lem: lambda_convergence} we have
\begin{equation}\label{ineq: lam_opt}
    \norm{\lambda^k-\lambda_*^k}^2\leq 2\norm{\lambda_+^k - \lambda^k}^2 + \frac{2}{\mu_{\lambda}^2}\norm{h_{\lambda}^k - \nabla_2\Phi_{\mu_{\lambda}}(x^k, \lambda^k)}^2,
\end{equation}
and by \eqref{ineq: h_var} we know
\begin{align}\label{ineq: hlam_simplify}
    \begin{aligned}
        \frac{L_{\nabla\eta_{\Delta_n}}}{c_3\tau_{\lambda}}\E\left[\norm{h_{\lambda}^{k+1} - h_{\lambda}^k}^2\right]&\leq 2c_3\alpha_k^2(\mu_{\lambda} + 1)^2\left(\E\left[\norm{h_{\lambda}^k - \nabla_2\Phi_{\mu_{\lambda}}(x^k,\lambda^k)}^2\right] + n\sigma_{f,0}^2\right) \\
        &\leq \frac{\alpha_k}{4}\E\left[\norm{h_{\lambda}^k - \nabla_2\Phi_{\mu_{\lambda}}(x^k,\lambda^k)}^2\right] + n\alpha_k^2\sigma_{f,0}^2.
    \end{aligned}
\end{align}
where the second inequality uses
\[
    2c_3(\mu_{\lambda} + 1)^2\leq \frac{1}{4},\ \alpha_k\leq 1
\]
in \eqref{ineq: conditions_primal}. Combining \eqref{ineq: lam_primal_long}, \eqref{ineq: lam_opt}, and \eqref{ineq: hlam_simplify}, we have
\begin{align}
    \begin{aligned}
        &\frac{\alpha_k}{\tau_{\lambda}^2}\E\left[\norm{\lambda_+^k - \lambda^k}^2\right] \\
        \leq &\frac{2}{\tau_{\lambda}}\E\left[\tilde W_{k,1}^{(2)} - \tilde W_{k+1,1}^{(2)}\right] + \frac{\alpha_k}{2}\E\left[\norm{h_{\lambda}^k - \nabla_2\Phi_{\mu_{\lambda}}(x^k,\lambda^k)}^2\right] \\
        & + 4\alpha_k L_f^2\E\left[\sum_{i=1}^{n}\norm{y_i^k-y_{*,i}^k}^2\right] + 13\alpha_kn L_{\Phi}^2\E\left[\norm{x_+^k - x^k}^2\right] + n\alpha_k^2\sigma_{f,0}^2,
    \end{aligned}
\end{align}
which implies the second inequality in \eqref{ineq: primal_decrease} by taking summation.
\end{proof}

\subsubsection{Dual Convergence}
\begin{lemma}\label{lem: x_dual}
    Suppose Assumptions \ref{aspt: smoothness}, \ref{aspt: stochastic_grad} hold for all $f_i, g_i$ and Assumption \ref{aspt: f_bound} holds. In Algorithm \ref{algo: momasoba} we have
    \begin{align}
        &\sum_{k=0}^{K}\alpha_k\E\left[\norm{h_x^k - \nabla_1\Phi_{\mu_{\lambda}}(x^k, \lambda^k)}^2\right] \notag\\
        \leq &\frac{1}{c_3}\E\left[\norm{h_x^0 - \nabla_1\Phi_{\mu_{\lambda}}(x^0, \lambda^0)}^2\right] + 3\sum_{k=0}^{K}\alpha_k\E\left[\norm{\E\left[w^{k+1}\middle|\setF_k\right] - \nabla_1\Phi_{\mu_{\lambda}}(x^k, \lambda^k)}^2\right] \notag\\
        & + \frac{3L_{\nabla \Phi}^2}{c_3^2}\sum_{k=0}^{K}\alpha_k\E\left[\norm{x_+^k - x^k}^2\right] +  \frac{3nL_{\Phi}^2}{c_3^2}\sum_{k=0}^{K}\alpha_k\E\left[\norm{\lambda_+^k - \lambda^k}^2\right] \notag\\
        & + 2c_3\sigma_{g,2}^2\sum_{k=0}^{K}\alpha_k^2\E\left[\sum_{i=1}^{n}\lambda_i^k\norm{z_i^k - z_{*,i}^k}^2\right] + c_3\sigma_w^2\sum_{k=0}^{K}\alpha_k^2, \notag\\
        &\sum_{k=0}^{K}\alpha_k\E\left[\norm{h_{\lambda}^k - \nabla_2\Phi_{\mu_{\lambda}}(x^k, \lambda^k)}^2\right] \notag\\
        \leq &\frac{1}{c_3}\E\left[\norm{h_{\lambda}^0 - \nabla_2\Phi_{\mu_{\lambda}}(x^0, \lambda^0)}^2\right] + 3\alpha_kL_f^2\sum_{i=1}^{n}\E\left[\norm{y_i^k - y_{*,i}^k}^2\right] \notag\\
        & + \frac{3nL_{\Phi}^2}{c_3^2}\sum_{k=0}^{K}\alpha_k\E\left[\norm{x_+^k - x^k}^2\right]  + \frac{3\mu_{\lambda}^2}{c_3^2}\sum_{k=0}^{K}\alpha_k\E\left[\norm{\lambda_+^k - \lambda^k}^2\right] + nc_3\sigma_{f,0}^2\sum_{k=0}^{K}\alpha_k^2.\label{ineq: dual_decrease}
    \end{align}
\end{lemma}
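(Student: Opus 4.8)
The plan is to follow the template of Lemma~\ref{lem: x_dual_simple}, proving a one-step almost-contraction separately for each dual error $\norm{h_x^k - \nabla_1\Phi_{\mu_\lambda}(x^k,\lambda^k)}^2$ and $\norm{h_\lambda^k - \nabla_2\Phi_{\mu_\lambda}(x^k,\lambda^k)}^2$, and then summing and telescoping. The two complications relative to the single-objective case are that each target gradient now depends on \emph{both} $x^k$ and $\lambda^k$, so the ``drift'' caused by moving from iterate $k$ to $k+1$ must be split into an $x$-part and a $\lambda$-part, and that the $\lambda$-gradient oracle $s^{k+1}$ is unbiased only for $(f_i(x^k,y_i^k))_i$ rather than for $\Phi^k = (\Phi_i(x^k))_i$, which injects an extra bias of squared size at most $L_f^2\sum_i\norm{y_i^k-y_{*,i}^k}^2$.

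For the first inequality I would use the moving-average update $h_x^{k+1} = (1-\theta_k)h_x^k + \theta_k w^{k+1}$ and write
\[
    h_x^{k+1} - \nabla_1\Phi_{\mu_\lambda}(x^{k+1},\lambda^{k+1}) = (1-\theta_k)\bigl(h_x^k - \nabla_1\Phi_{\mu_\lambda}(x^k,\lambda^k)\bigr) + \theta_k\bigl(\E\left[w^{k+1}\middle|\setF_k\right] - \nabla_1\Phi_{\mu_\lambda}(x^k,\lambda^k)\bigr) + \bigl(\nabla_1\Phi_{\mu_\lambda}(x^k,\lambda^k) - \nabla_1\Phi_{\mu_\lambda}(x^{k+1},\lambda^{k+1})\bigr) + \theta_k\bigl(w^{k+1}-\E\left[w^{k+1}\middle|\setF_k\right]\bigr).
\]
Taking conditional expectation given $\setF_k$, the martingale-difference term decouples and contributes $\theta_k^2\sigma_{w,k+1}^2$ through Lemma~\ref{lem: variance}; for the remaining (now $\setF_k$-measurable) sum I would extract the factor $(1-\theta_k)$ by convexity of $\norm{\cdot}^2$ and bound the leftover $\theta_k$-weighted square by a three-term Cauchy--Schwarz, the three pieces being the hypergradient-estimation bias $\E\left[w^{k+1}\middle|\setF_k\right]-\nabla_1\Phi_{\mu_\lambda}(x^k,\lambda^k)$, the $x$-drift $\nabla_1\Phi_{\mu_\lambda}(x^k,\lambda^k)-\nabla_1\Phi_{\mu_\lambda}(x^{k+1},\lambda^k)$, and the $\lambda$-drift $\nabla_1\Phi_{\mu_\lambda}(x^{k+1},\lambda^k)-\nabla_1\Phi_{\mu_\lambda}(x^{k+1},\lambda^{k+1})$. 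The two drifts are controlled by the Lipschitz constants $L_{\nabla\Phi}$ (in $x$) and $L_{\nabla_1\Phi_{\mu_\lambda}}=\sqrt{n}L_\Phi$ (in $\lambda$) from Lemma~\ref{lem: smoothness} together with $\norm{x^{k+1}-x^k}=\alpha_k\norm{x_+^k-x^k}$ and $\norm{\lambda^{k+1}-\lambda^k}=\alpha_k\norm{\lambda_+^k-\lambda^k}$; the factor $1/\theta_k^2$ cancels the $\alpha_k^2$, and after rearranging, summing, telescoping, and dividing by $c_3$ (using $\theta_k=c_3\alpha_k$) one is left exactly with the coefficients $3$, $3L_{\nabla\Phi}^2/c_3^2$, $3nL_\Phi^2/c_3^2$, $2c_3\sigma_{g,2}^2$ and $c_3$ on the stated terms.

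For the $h_\lambda$ recursion I would repeat this verbatim with $w^{k+1}$ replaced by $s^{k+1}-\mu_\lambda(\lambda^k-\tfrac{\bfone_n}{n})$ and the target $\nabla_1\Phi_{\mu_\lambda}$ replaced by $\nabla_2\Phi_{\mu_\lambda}(x,\lambda)=(\Phi_i(x))_i-\mu_\lambda(\lambda-\tfrac{\bfone_n}{n})$. Since $\E[s^{k+1}-\mu_\lambda(\lambda^k-\tfrac{\bfone_n}{n})\mid\setF_k]=(f_i(x^k,y_i^k))_i-\mu_\lambda(\lambda^k-\tfrac{\bfone_n}{n})$, the ``bias'' piece in the Cauchy--Schwarz split is $(f_i(x^k,y_i^k)-\Phi_i(x^k))_i$, whose squared norm is at most $L_f^2\sum_i\norm{y_i^k-y_{*,i}^k}^2$ by the $L_f$-Lipschitzness of each $f_i$ in its second argument (Assumption~\ref{aspt: f_bound}); this yields the $3L_f^2\sum_i\E[\norm{y_i^k-y_{*,i}^k}^2]$ term. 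The $x$- and $\lambda$-drifts of $\nabla_2\Phi_{\mu_\lambda}$ use the Lipschitz constants $\sqrt{n}L_\Phi$ and $\mu_\lambda$ from Lemma~\ref{lem: smoothness}, producing $3nL_\Phi^2/c_3^2$ and $3\mu_\lambda^2/c_3^2$, and the noise bound $\E[\norm{s^{k+1}-\E[s^{k+1}\mid\setF_k]}^2\mid\setF_k]\leq n\sigma_{f,0}^2$ from Assumption~\ref{aspt: f_bound} gives the $nc_3\sigma_{f,0}^2$ term after dividing by $c_3$.

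I do not expect a genuine obstacle: this is the same second-moment bookkeeping as in the single-objective proof. The points that require care are keeping the $x$-drift and $\lambda$-drift contributions separated so that their coefficients land on $\norm{x_+^k-x^k}^2$ and $\norm{\lambda_+^k-\lambda^k}^2$ respectively, applying the three-way (rather than two-way) Cauchy--Schwarz consistently, and stating every per-step estimate conditionally on $\setF_k$ so that the martingale-difference cross terms genuinely vanish before the summation — which is precisely where the conditional independence of the stochastic oracles from $\setF_k$ (Assumption~\ref{aspt: f_bound}) enters.
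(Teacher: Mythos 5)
Your proposal is correct and follows essentially the same route as the paper: the same four-term decomposition of $h_x^{k+1}-\nabla_1\Phi_{\mu_{\lambda}}(x^{k+1},\lambda^{k+1})$ (and its $h_\lambda$ analogue), conditional expectation to kill the martingale cross term and invoke Lemma \ref{lem: variance}, convexity to pull out $(1-\theta_k)$, a three-way Cauchy--Schwarz separating the bias from the $x$- and $\lambda$-drifts bounded via $L_{\nabla\Phi}$, $\sqrt{n}L_\Phi$, $\mu_\lambda$ from Lemma \ref{lem: smoothness}, the $L_f$-Lipschitz bound on the $s^{k+1}$ bias, and finally summing, telescoping, and dividing by $c_3$.
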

\begin{proof}
    The proof is similar to that of Lemma \ref{lem: x_dual_simple}, except that we now have another $\lambda^k$ to handle. Since $\nabla_1\Phi(x,\lambda) = \nabla_1\Phi_{\mu_{\lambda}}(x,\lambda)$ for all $(x,\lambda)$ (see \eqref{eq: minmaxbo_reform_reg}), for simplicity we omit the subscript $\mu_{\lambda}$ in $\nabla_1\Phi_{\mu_{\lambda}}(x,\lambda)$ in this proof. Note that by moving average update of $h_x^k$, we have
\begin{align}
    \begin{aligned}
        &h_x^{k+1} - \nabla_1\Phi(x^{k+1}, \lambda^{k+1}) \\
        = &(1 - \theta_k)h_x^k + \theta_k(w^{k+1} - \E\left[w^{k+1}\middle|\setF_k\right]) + \theta_k\E\left[w^{k+1}\middle|\setF_k\right] - \nabla_1\Phi(x^{k+1}, \lambda^{k+1})\\
        = &(1 - \theta_k)(h_x^k - \nabla_1\Phi(x^k, \lambda^k)) + \theta_k(\E\left[w^{k+1}\middle|\setF_k\right] - \nabla_1\Phi(x^k, \lambda^k)) \\
        & + \nabla_1\Phi(x^k, \lambda^k) - \nabla_1\Phi(x^{k+1}, \lambda^{k+1})  + \theta_k(w^{k+1} - \E\left[w^{k+1}\middle|\setF_k\right])
    \end{aligned}
\end{align}
Hence we know 
\begin{align}\label{ineq: h_x_decrease}
    \begin{aligned}
        &\E\left[\norm{h_x^{k+1} - \nabla_1\Phi(x^{k+1}, \lambda^{k+1})}^2\middle|\setF_k\right] \\
        = &\left\| (1 - \theta_k)(h_x^k - \nabla_1\Phi(x^k, \lambda^k)) + \theta_k(\E\left[w^{k+1}\middle|\setF_k\right] - \nabla_1\Phi(x^k, \lambda^k)) \right. \\
        & + \left. \nabla_1\Phi(x^k, \lambda^k) - \nabla_1\Phi(x^{k+1}, \lambda^{k+1})\right\|^2+ \theta_k^2\E\left[\norm{w^{k+1} - \E\left[w^{k+1}\middle|\setF_k\right]}^2\middle|\setF_k\right]\\ 
        \leq &(1 - \theta_k)\norm{h_x^k - \nabla_1\Phi(x^k, \lambda^k)}^2 +\theta_k^2\sigma_{w,k+1}^2 \\
        & + \theta_k\norm{(\E\left[w^{k+1}\middle|\setF_k\right] - \nabla_1\Phi(x^k, \lambda^k)) + \frac{1}{\theta_k}(\nabla_1\Phi(x^k, \lambda^k) - \nabla_1\Phi(x^{k+1}, \lambda^{k+1}))}^2 \\
        \leq &(1 - \theta_k)\norm{h_x^k - \nabla_1\Phi(x^k, \lambda^k)}^2 + 3\theta_k\norm{\E\left[w^{k+1}\middle|\setF_k\right] - \nabla_1\Phi(x^k, \lambda^k)}^2 +\theta_k^2\sigma_{w,k+1}^2 \\
        & + \frac{3}{\theta_k}\norm{\nabla_1\Phi(x^k, \lambda^k) - \nabla_1\Phi(x^{k+1}, \lambda^{k})}^2 + \frac{3}{\theta_k}\norm{\nabla_1\Phi(x^{k+1}, \lambda^k) - \nabla_1\Phi(x^{k+1}, \lambda^{k+1})}^2 \\
        \leq &(1 - \theta_k)\norm{h_x^k - \nabla_1\Phi(x^k, \lambda^k)}^2 + 3\theta_k\norm{\E\left[w^{k+1}\middle|\setF_k\right] - \nabla_1\Phi(x^k, \lambda^k)}^2 \\
        & + \frac{3\alpha_k^2}{\theta_k}\left(L_{\nabla \Phi}^2\norm{x_+^k - x^k}^2 + nL_{\Phi}^2\norm{\lambda_+^k - \lambda^k}^2\right) + \theta_k^2\sigma_{w,k+1}^2,
    \end{aligned}
\end{align}
where the first equality uses the fact that $x^k, \lambda^k, h_x^k, x^{k+1}, \lambda^{k+1}$ are all $\setF_k$-measurable and are independent of $w^{k+1}$ given $\setF_k$, the first inequality uses the convexity of $\norm{\cdot}^2$ and \eqref{ineq: w_var}, the second inequality uses Cauchy-Schwarz inequality, the third inequality uses the Lipschitz continuity of $\nabla_1\Phi$ in Lemma \ref{lem: smoothness}, and the update rules of $x^{k+1}$ and $\lambda^{k+1}$. Taking summation, expectation on both sides of \eqref{ineq: h_x_decrease}, dividing $c_3$, and applying \eqref{ineq: w_var}, we know the first inequality in \eqref{ineq: dual_decrease} holds.

Similarly we have
\begin{align}
    \begin{aligned}
        &h_{\lambda}^{k+1} - \nabla_2\Phi_{\mu_{\lambda}}(x^{k+1}, \lambda^{k+1}) \\
        = &(1 - \theta_k)h_{\lambda}^k + \theta_k\left(s^{k+1} - \mu_{\lambda}\lambda^k + \mu_{\lambda}\frac{\bfone_n}{n}\right) - \nabla_2\Phi_{\mu_{\lambda}}(x^{k+1}, \lambda^{k+1}) \\
        = &(1 - \theta_k)(h_{\lambda}^k - \nabla_2\Phi_{\mu_{\lambda}}(x^k, \lambda^k)) +  \theta\left(\E\left[s^{k+1}\middle|\setF_k\right] - \nabla_2\Phi(x^k,\lambda^k)\right) \\
        & + \nabla_2\Phi_{\mu_{\lambda}}(x^k, \lambda^k) - \nabla_2\Phi_{\mu_{\lambda}}(x^{k+1}, \lambda^{k+1}) +  \theta_k(s^{k+1} - \E\left[s^{k+1}\middle|\setF_k\right]).
    \end{aligned}
\end{align}
where the second equality uses $\nabla_2\Phi_{\mu_{\lambda}}(x^k, \lambda^k) = \nabla_2\Phi(x^k, \lambda^k) - \mu_{\lambda}\left(\lambda^k - \frac{\bfone_n}{n}\right)$. Hence we know
\begin{align}
        &\E\left[\norm{h_{\lambda}^{k+1} - \nabla_2\Phi_{\mu_{\lambda}}(x^{k+1}, \lambda^{k+1})}^2\middle|\setF_k\right] \notag\\
        = &\left\|(1 - \theta_k)(h_{\lambda}^k - \nabla_2\Phi_{\mu_{\lambda}}(x^k, \lambda^k)) + \theta\left(\E\left[s^{k+1}\middle|\setF_k\right] - \nabla_2\Phi(x^k,\lambda^k)\right) \right. \notag\\
        & + \left. \nabla_2\Phi_{\mu_{\lambda}}(x^k, \lambda^k) - \nabla_2\Phi_{\mu_{\lambda}}(x^{k+1}, \lambda^{k+1})\right\|^2 + \theta_k^2\E\left[\norm{s^{k+1} - \E\left[s^{k+1}\middle|\setF_k\right]}^2\middle|\setF_k\right]\notag\\ 
        \leq &(1 - \theta_k)\norm{h_{\lambda}^k - \nabla_2\Phi_{\mu_{\lambda}}(x^k, \lambda^k)}^2 + n\theta_k^2\sigma_{f,0}^2\notag\\
        & + \theta_k\norm{\E\left[s^{k+1}\middle|\setF_k\right] - \nabla_2\Phi(x^k,\lambda^k) + \frac{1}{\theta_k}(\nabla_2\Phi_{\mu_{\lambda}}(x^k, \lambda^k) - \nabla_2\Phi_{\mu_{\lambda}}(x^{k+1}, \lambda^{k+1}))}^2 \notag\\
        \leq &(1 - \theta_k)\norm{h_{\lambda}^k - \nabla_2\Phi_{\mu_{\lambda}}(x^k, \lambda^k)}^2 + 3\theta_k\norm{\E\left[s^{k+1}\middle|\setF_k\right] - \nabla_2\Phi(x^k,\lambda^k)}^2 + n\theta_k^2\sigma_{f,0}^2 \notag\\
        & + \frac{3}{\theta_k}\norm{\nabla_2\Phi_{\mu_{\lambda}}(x^k, \lambda^k) - \nabla_2\Phi_{\mu_{\lambda}}(x^{k+1}, \lambda^{k})}^2 + \frac{3}{\theta_k}\norm{\nabla_2\Phi_{\mu_{\lambda}}(x^{k+1}, \lambda^k) - \nabla_2\Phi_{\mu_{\lambda}}(x^{k+1}, \lambda^{k+1})}^2 \notag\\
        \leq &(1 - \theta_k)\norm{h_{\lambda}^k - \nabla_2\Phi_{\mu_{\lambda}}(x^k, \lambda^k)}^2 + 3\theta_kL_f^2\sum_{i=1}^{n}\norm{y_i^k - y_{*,i}^k}^2\notag \\
        & + \frac{3\alpha_k^2}{\theta_k}\left(nL_{\Phi}^2\norm{x_+^k - x^k}^2 + \mu_{\lambda}^2\norm{\lambda_+^k - \lambda^k}^2\right) + n\theta_k^2\sigma_{f,0}^2,\label{ineq: h_lambda_decrease}
\end{align}
where the third inequality uses Lemma \ref{lem: smoothness} and the fact that
\[
    \E\left[s^{k+1}\middle|\setF_k\right] = \left(f_1(x^k, y_1^k), ..., f_n(x^k, y_n^k)\right)\T,\ \nabla_2\Phi(x^k,\lambda^k) = \left(f_1(x^k, y_{*,1}^k), ..., f_n(x^k, y_{*,n}^k)\right)\T
\]
Taking summation, expectation on both sides of \eqref{ineq: h_lambda_decrease}, and dividing $c_3$, we know the second inequality in \eqref{ineq: dual_decrease} holds.
\end{proof}

\subsubsection{Proof of Theorem \ref{thm: momasoba_convergence}}
Now we are ready to present our main convergence results. Note that by Lemmas \eqref{lem: primal} and \eqref{lem: x_dual}, for $\tilde V_{k,1}$ we have
\begin{align}
        &\sum_{k=0}^{K}\alpha_k\E\left[\tilde V_{k,1}\right] = \sum_{k=0}^{K}\frac{\alpha_k}{\tau_x^2}\E\left[\norm{x_+^k-x^k}^2\right] + \sum_{k=0}^{K}\alpha_k\E\left[\norm{h_x^k - \nabla_1\Phi_{\mu_{\lambda}}(x^k, \lambda^k)}^2\right]\notag\\
        \leq & \frac{3L_{\nabla \Phi}^2}{c_3^2}\sum_{k=0}^{K}\alpha_k\E\left[\norm{x_+^k - x^k}^2\right] + \frac{1}{2}\sum_{k=0}^{K}\alpha_k\E\left[\norm{h_x^k - \nabla_1\Phi_{\mu_{\lambda}}(x^k,\lambda^k)}^2\right]\notag\\
        & + \frac{2}{\tau_x}\E\left[\tilde W_{0,1}^{(1)}\right] + \frac{1}{c_3}\E\left[\norm{h_x^0 - \nabla_1\Phi_{\mu_{\lambda}}(x^0, \lambda^0)}^2\right] + 2\sum_{k=0}^{K}\alpha_k\E\left[\norm{\E\left[w^{k+1}\middle|\setF_k\right] - \nabla\Psi(x^k)}^2\right]\notag \\
        & + 4\sum_{k=0}^{K}\alpha_k\E\left[\norm{\E\left[w^{k+1}\middle|\setF_k\right] -\nabla_1\Phi_{\mu_{\lambda}}(x^k,\lambda^k)}^2\right]  + \frac{3nL_{\Phi}^2}{c_3^2}\sum_{k=0}^{K}\alpha_k\E\left[\norm{\lambda_+^k - \lambda^k}^2\right]\notag \\
        & + \left(1+2c_3\right)\sigma_{g,2}^2\sum_{k=0}^{K}\alpha_k^2\E\left[\sum_{i=1}^{n}\lambda_i^k\norm{z_i^k - z_{*,i}^k}^2\right] + \left(1+c_3\right)\sigma_w^2\left(\sum_{k=0}^{K}\alpha_k^2\right).\label{ineq: tildeV_x}
\end{align}
By Lemma \ref{lem: hypergrad_error} we know
\begin{align}
    &4\sum_{k=0}^{K}\alpha_k\E\left[\norm{\E\left[w^{k+1}\middle|\setF_k\right] -\nabla_1\Phi_{\mu_{\lambda}}(x^k,\lambda^k)}^2\right] + 2\sum_{k=0}^{K}\alpha_k\E\left[\norm{\E\left[w^{k+1}\middle|\setF_k\right] - \nabla\Psi(x^k)}^2\right] \notag\\
    \leq & \sum_{k=0}^{K}\alpha_k\E\left[\sum_{i=1}^{n}20\left(\left(L_{\nabla f}^2 + L_{\nabla^2 g}^2\right)\norm{y_i^k - y_{*,i}^k}^2 + L_{\nabla g}^2\norm{z_i^k - z_{*,i}^k}^2\right)\right] \notag\\
    & + \sum_{k=0}^{K}16nL_{\Phi}^2\alpha_k\E\left[\norm{\lambda_+^k - \lambda^k}^2 + \frac{1}{\mu_{\lambda}^2}\norm{h_{\lambda}^k - \nabla_2\Phi_{\mu_{\lambda}}(x^k, \lambda^k)}^2\right]. \label{ineq: minmax_finalyz}
\end{align}
Choosing 
\begin{equation}\label{ineq: minmax_alpha_condition}
    \left(1+2c_3\right)\sigma_{g,2}^2\alpha_k\leq L_{\nabla g}^2
\end{equation}
in \eqref{ineq: tildeV_x}, and using \eqref{ineq: minmax_finalyz}, we know 
\begin{align}\label{ineq: tildeV_1_final}
    \begin{aligned}
        \sum_{k=0}^{K}\alpha_k\E\left[\tilde V_{k,1}\right]\leq &C_{v_1, x}\tau_x^2\sum_{k=0}^{K}\frac{\alpha_k}{\tau_x^2}\E\left[\norm{x_+^k - x^k}^2\right] + C_{v_1, h_x}\sum_{k=0}^{K}\alpha_k\E\left[\norm{h_x^k - \nabla_1\Phi_{\mu_{\lambda}}(x^k,\lambda^k)}^2\right] \\
        &+ C_{v_1, \lambda}\tau_{\lambda}^2\sum_{k=0}^{K}\frac{\alpha_k}{\tau_{\lambda}^2}\E\left[\norm{\lambda_+^k - \lambda^k}^2\right] +   C_{v_1, h_{\lambda}}\sum_{k=0}^{K}\alpha_k\E\left[\norm{h_{\lambda}^k - \nabla_2\Phi_{\mu_{\lambda}}(x^k, \lambda^k)}^2\right]  \\
        &+ C_{v_1, 0} + C_{v_1, 1}\left(\sum_{k=0}^{K}\alpha_k^2\right),
    \end{aligned}
\end{align}
where the constants are defined as
\begin{align}
    C_{v_1, x} = &20n\left(L_{\nabla f}^2 + L_{\nabla^2 g}^2\right)C_{yx} + 21nL_{\nabla g}^2C_{zx}  + \frac{3L_{\nabla \Phi}^2}{c_3^2},\ C_{v_1, h_x} = \frac{1}{2}, \notag \\
    C_{v_1, \lambda} = &\left(16+\frac{3}{c_3^2}\right)nL_{\Phi}^2,\ C_{v_1, h_{\lambda}} = \frac{16nL_{\Phi}^2}{\mu_{\lambda}^2}, \notag \\
    C_{v_1, 0} = &20\left(L_{\nabla f}^2 + L_{\nabla^2 g}^2\right)\left(\sum_{i=1}^{n}C_{y_i, 0}\right) + 21L_{\nabla g}^2\left(\sum_{i=1}^{n}C_{z_i, 0}\right)  + \frac{2}{\tau_x}\E\left[\tilde W_{0,1}^{(1)}\right] \notag \\
    &+ \frac{1}{c_3}\E\left[\norm{h_x^0 - \nabla_1\Phi_{\mu_{\lambda}}(x^0, \lambda^0)}^2\right], \notag \\
    C_{v_1, 1} = &20n\left(L_{\nabla f}^2 + L_{\nabla^2 g}^2\right)C_{y,1} + 21nL_{\nabla g}^2C_{z,1}. \notag 
\end{align}
For $\tilde V_{k,2}$ we have
\begin{align}\label{ineq: tildeV_lam}
    \begin{aligned}
        &\sum_{k=0}^{K}\alpha_k\E\left[\tilde V_{k,2}\right] = \sum_{k=0}^{K}\frac{\alpha_k}{\tau_{\lambda}^2}\E\left[\norm{\lambda_+^k - \lambda^k}^2\right] + \sum_{k=0}^{K}\alpha_k\E\left[\norm{h_{\lambda}^k - \nabla_2\Phi_{\mu_{\lambda}}(x^k, \lambda^k)}^2\right]\\
        \leq & \frac{3\mu_{\lambda}^2}{c_3^2}\sum_{k=0}^{K}\alpha_k\E\left[\norm{\lambda_+^k - \lambda^k}^2\right] + \frac{1}{2}\sum_{k=0}^{K}\alpha_k\E\left[\norm{h_{\lambda}^k - \nabla_2\Phi_{\mu_{\lambda}}(x^k, \lambda^k)}^2\right] \\
        &+ \frac{2}{\tau_{\lambda}}\E\left[\tilde W_{0,1}^{(2)}\right] + \frac{1}{c_3}\E\left[\norm{h_{\lambda}^0 - \nabla_2\Phi_{\mu_{\lambda}}(x^0, \lambda^0)}^2\right] + 7L_f^2\sum_{k=0}^{K}\alpha_k\E\left[\sum_{i=1}^{n}\norm{y_i^k-y_{*,i}^k}^2\right] \\
        &+ \left(13 + \frac{3}{c_3^2}\right)nL_{\Phi}^2\sum_{k=0}^{K}\alpha_k\E\left[\norm{x_+^k - x^k}^2\right] + n(1 + c_3)\sigma_{f,0}^2\left(\sum_{k=0}^{K}\alpha_k^2\right),
    \end{aligned}
\end{align}
which implies
\begin{align}\label{ineq: tildeV_2_final}
    \begin{aligned}
        \sum_{k=0}^{K}\alpha_k\E\left[\tilde V_{k,2}\right] \leq &C_{v_2, x}\tau_x^2\sum_{k=0}^{K}\frac{\alpha_k}{\tau_x^2}\E\left[\norm{x_+^k - x^k}^2\right] + C_{v_2, h_x}\sum_{k=0}^{K}\alpha_k\E\left[\norm{h_x^k - \nabla_1\Phi_{\mu_{\lambda}}(x^k,\lambda^k)}^2\right] \\
        &+ C_{v_2, \lambda}\tau_{\lambda}^2\sum_{k=0}^{K}\frac{\alpha_k}{\tau_{\lambda}^2}\E\left[\norm{\lambda_+^k - \lambda^k}^2\right] +   C_{v_2, h_{\lambda}}\sum_{k=0}^{K}\alpha_k\E\left[\norm{h_{\lambda}^k - \nabla_2\Phi_{\mu_{\lambda}}(x^k, \lambda^k)}^2\right]  \\
        &+ C_{v_2, 0} + C_{v_2, 1}\left(\sum_{k=0}^{K}\alpha_k^2\right)
    \end{aligned}
\end{align}
where the constants are defined as
\begin{align}
    \begin{aligned}
        C_{v_2, x} = &7nL_f^2C_{yx} + \left(13 + \frac{3}{c_3^2}\right)nL_{\Phi}^2,\ C_{v_2, h_x} = 0,\\
        C_{v_2, \lambda} = &\frac{3\mu_{\lambda}^2}{c_3^2},\ C_{v_2, h_{\lambda}} = \frac{1}{2},\\
        C_{v_2, 0} = &7L_f^2\left(\sum_{i=1}^{n}C_{y_i, 0}\right) + \frac{2}{\tau_{\lambda}}\E\left[\tilde W_{0,1}^{(2)}\right] + \frac{1}{c_3}\E\left[\norm{h_{\lambda}^0 - \nabla_2\Phi_{\mu_{\lambda}}(x^0, \lambda^0)}^2\right] \\
        C_{v_2, 1} = &7nL_f^2C_{y,1} + n(1+c_3)\sigma_{f,0}^2.
    \end{aligned}
\end{align}
According to the definition of the constants in Lemmas \ref{lem: yz_error} and \ref{lem: yz_error_minmax}, we could obtain (for simplicity we omit the dependency on $\kappa$ here)
\begin{align*}
    &C_{v_1, x} = \cO\left(\frac{n}{c_1^2} + \frac{n}{c_2^2} + \frac{1}{c_3^2}\right),\ C_{v_1, h_x} = \frac{1}{2} = \cO\left(1\right),\ C_{v_1,\lambda} = \cO\left(n + \frac{n}{c_3^2}\right),\ C_{v_1, h_{\lambda}} = \cO\left(\frac{n}{\mu_{\lambda}^2}\right),\\
    &C_{v_1, 0} = \cO\left(\frac{n}{c_1} + \frac{n}{c_2} + \frac{1}{c_3} + \frac{1}{\tau_x} + \frac{1}{c_3\tau_x}\right),\ C_{v_1, 1} = \cO\left(nc_1 + nc_2\right), \\
    &C_{v_2, x} = \cO\left(\frac{n}{c_1^2} + n + \frac{n}{c_3^2}\right),\ C_{v_2, h_x} = 0,\ C_{v_2,\lambda} = \cO\left(\frac{1}{c_3^2}\right),\ C_{v_2, h_{\lambda}} = \frac{1}{2} = \cO\left(1\right),\\
    &C_{v_2, 0} = \cO\left(\frac{n}{c_1} + \frac{1}{c_3}\right),\ C_{v_2, 1} = \cO\left(nc_1 + n + nc_3\right).
\end{align*}
Hence we can pick $\alpha_k, c_1, c_2, c_3, \tau_x, \tau_{\lambda}$ such that
\begin{align*}
    \alpha_k \equiv \Theta\left(\frac{1}{\sqrt{nK}}\right),\ c_1 = c_2 = \sqrt{n},\ c_3 = \Theta(1),\ \tau_x = \cO\left(\frac{\mu_{\lambda}}{n}\right),\ \tau_{\lambda} = \frac{1}{\mu_{\lambda}}
\end{align*}
which leads to
\begin{align*}
    C_{v_1, x}\tau_x^2 \leq \frac{1}{2},\ C_{v_2, x}C_{v_1,\lambda}\tau_x^2\tau_{\lambda}^2\leq \frac{1}{8},\ C_{v_2, \lambda}\tau_{\lambda}^2\leq \frac{1}{2},\ 
\end{align*}
and the conditions (\eqref{ineq: beta_gamma_condition}, \eqref{ineq: conditions_primal}, and \eqref{ineq: minmax_alpha_condition}) in previous lemmas hold. Moreover, using the above conditions in \eqref{ineq: tildeV_1_final} and \eqref{ineq: tildeV_2_final}, we can get
\begin{align*}
    &\sum_{k=0}^{K}\alpha_k\E\left[\tilde V_{k,1}\right]\leq \frac{1}{2}\sum_{k=0}^{K}\alpha_k\E\left[\tilde V_{k,1}\right] + C_{v_1,\lambda}\tau_{\lambda}^2\sum_{k=0}^{K}\alpha_k\E\left[\tilde V_{k,2}\right] + \cO\left(n\right)\\
    &\sum_{k=0}^{K}\alpha_k\E\left[\tilde V_{k,2}\right]\leq \frac{1}{2}\sum_{k=0}^{K}\alpha_k\E\left[\tilde V_{k,2}\right] + C_{v_2,x}\tau_x^2\sum_{k=0}^{K}\alpha_k\E\left[\tilde V_{k,1}\right] + \cO\left(\sqrt{n}\right).
\end{align*}
Combining the above two inequalities, we have
\[
    \frac{1}{K}\sum_{k=0}^{K}\E\left[\tilde V_{k,1}\right] = \cO\left(\frac{n^2}{\mu_{\lambda}^2\sqrt{K}}\right),\ \frac{1}{K}\sum_{k=0}^{K}\E\left[\tilde V_{k,2}\right] = \cO\left(\frac{n}{\sqrt{K}}\right),
\]
which completes the proof of Theorem \ref{thm: momasoba_convergence} since we have
\begin{align*}
    &\norm{\frac{1}{\tau_x}\left(x^k - \Pi_{\cX}\left(x^k - \tau_x\nabla\Psi_{\mu_{\lambda}}(x^k)\right) \right)}^2\\
    \leq &\frac{2}{\tau_x^2}\norm{x^k - \Pi_{\cX}\left(x^k - \tau_x\nabla_1\Phi_{\mu_{\lambda}}(x^k, \lambda^k)\right) }^2 + \frac{2}{\tau_x^2}\norm{\Pi_{\cX}\left(x^k - \tau_x\nabla\Psi_{\mu_{\lambda}}(x^k)\right) - \Pi_{\cX}\left(x^k - \tau_x\nabla_1\Phi_{\mu_{\lambda}}(x^k, \lambda^k)\right) }^2 \\
    \leq &\frac{2}{\tau_x^2}\norm{x^k - \Pi_{\cX}\left(x^k - \tau_x\nabla_1\Phi_{\mu_{\lambda}}(x^k, \lambda^k)\right) }^2 + 2nL_{\Phi}^2\norm{\lambda^k - \lambda_*^k}^2 \\
    \leq & 4\tilde V_{k,1} + \frac{4nL_{\Phi}^2}{\mu_{\lambda}^2}\tilde V_{k,2} = \cO\left(\frac{n^2}{\mu_{\lambda}^2\sqrt{K}}\right)
\end{align*}
where the second inequality uses non-expansiveness of projection operator and $\sqrt{n}L_{\Phi}$-Lipschitz continuity of $\nabla_1\Phi_{\mu_{\lambda}}(x,\cdot)$ in Lemma \ref{lem: smoothness}. Note that we have $n^2$ in the numerator since we explicitly write out the Lipschitz constant $L_{\nabla_1\Phi_{\mu_{\lambda}}}$.

\section{Discussions on the Prior Work \cite{gu2022min}}\label{sec: app_discussion}
In this section, we discuss several issues in the current form of \cite{gu2022min}, which introduces a \textbf{M}ulti-\textbf{O}bjective \textbf{R}obust \textbf{Bi}level \textbf{T}wo-timescale optimization algorithm (\texttt{MORBiT}). 



The primary issue in the current analysis of \texttt{MORBiT} arises from the \textit{ambiguity} and \textit{inconsistency} regarding the \textit{expectation and filtration}. As a consequence, the current form of the paper was unable to demonstrate $\E[\max_{i\in [n]}\|y_i^{k} - y_i^*(x^{(k-1)})\|^2 ]\leq \tilde{\cO}(\sqrt{n}K^{-2/5})$ claimed in Theorem 1 (10b) of \cite{gu2022min}. The subsequent arguments are incorrect. We discuss some mistakes made in \cite{gu2022min} as follows.

We start by looking at Lemma 8 (informal) and Lemma 14 (formal) in \cite{gu2022min} that characterize the upper bound of the $\cL^{(k+1)} - \cL^{(k)}$ where $\cL^{(k)} = \E[\sum_{i=1}^{n}\lambda_i^{(k)}\ell_i(x^{(k)})]$. Here, the function $\ell_i$ is the function $\Phi_i(x)$ in our notation. The paper incorrectly asserted that $$\cL^{k} = \sum_{i=1}^{n}\lambda_i^{(k)}\E[\ell_i(x^{(k)})].$$ 
To see why, let $\setF_k$ denote the sigma algebra generated by all iterates ($x, y, \lambda$) with superscripts not greater than $k$. It is important to note that both $\{\lambda_i^{(k)}\}$ and $x^{(k)}$ are random objectives given the filtration 
$\setF_{k}$. The ambiguity lies in the lack of clarity regarding the randomness over which the expectation operation is performed. In fact, we can rewrite the claim of Lemma 14 in \cite{gu2022min} without hiding the randomness. Let $\cL^{(k)} = \sum_{i=1}^{n} \lambda_i^{(k)} \ell_i(x^{(k)})$. Then, we have
\begin{align}
    \cL^{(k+1)} - \cL^{(k)} \label{eq: appendix-c problem 1} \leq & \cO(\alpha) \underbrace{\left(\sum_{i=1}^{n} \lambda_i^k \norm{y_i^{k+1} - y_i^*(x^{(k)})} \right)^2}_{\leq \max_{i\in[n]}\norm{y_i^{k+1} - y_i^*(x^{(k)})}^2} \\
    &- \frac{1}{\alpha} \norm{x^{k+1} - x^{k}}^2 + \cO(\gamma n) + \cO(\alpha) \norm{h_{x}^{(k)} - \E[h_{x}^{(k)}\mid \setF_k]}^2,\notag
\end{align}
where $\alpha,\beta, \gamma$ are step sizes for $x$, $y$, and $\lambda$ respectively. We hide the dependency for constants in their assumptions for simplicity. In addition, we want to emphasize that, unlike our notation, $h_x^{(k)}$ and $h_\lambda^{(k)}$ are stochastic gradients at step $k$. Therefore, $h_x^{(k)}$ and $h_\lambda^{(k)}$ are random objects given $\setF_k$. By taking expectations over all the randomness above, we can see that Lemma 14 in \cite{gu2022min} is incorrect because it writes in the form of $\max\E[\cdot]$ instead of $\E[\max(\cdot)]$. Therefore, the subsequent arguments regarding the convergence of $x,y,\lambda$ are incorrect, at least in the current form. 

Regardless of the error, one may be able to proceed with the proof by utilizing Eq.\eqref{eq: appendix-c problem 1} since our ultimate goal is to demonstrate the convergence of $\E[\max_{i\in [n]}\|y_i^{k} - y_i^*(x^{(k-1)})\|^2]$. One possible direction is to utilize the basic recursive inequality of $\max_{i\in [n]}\|y_i^{k+1} - y_i^*(x^{(k)}) \|^2$. Observe that for each $i\in[n]$, we can establish the following inequality similar to Lemma 13 in \cite{gu2022min} without hiding the randomness:
\begin{align}
    &\norm{y_i^{(k+1)} - y_i^*(x^{(k)})}^2 \leq \left(1-\cO(\mu_g\beta)\right) \norm{y_i^{(k)} - y_i^*(x^{(k-1)})}^2 + \cO\left(\frac{1}{\mu_g\beta}\right)\norm{x^{k} - x^{k-1}}^2 \\
    &+ \cO(\beta^2) \norm{h_{y,i}^{(k)} - \E[h_{y,i}^{(k)}|\setF_k]}^2 + \cO(\beta)\< y_i^{(k)} - y_i^*(x^{(k-1)}),h_{y,i}^{(k)} - \E[h_{y,i}^{(k)}|\setF_k] >\notag
\end{align}
However, the order of taking the expectation over the randomness and the maximum over $i\in[n]$ adds complexity to the problem. The last inner-product term can only be zero when first taking the conditional expectation with respect to $\setF_k$. When applying Young's inequality to bound this term, it inevitably introduces terms such as $\cO(\beta) \|h_{y,i}^{(k)} - \E[h_{y,i}^{(k)}|\setF_k]\|^2$ or $\cO(1) \|y_i^{(k)} - y_i^*(x^{(k-1)})\|^2$, which make it challenging to proceed further with the convergence analysis.

Finally, we remark about the choice of the stationarity condition used in~\cite{gu2022min}. Although the algorithmic aspect in~\cite{gu2022min} is motivated by~\cite{lin2020gradient}, the notion of stationarity for $\lambda$ in \cite{gu2022min} is different from \cite{lin2020gradient}. Under the notion of stationarity in \cite{lin2020gradient} (Definition 3.7) $\Phi_{1/2\ell}(\cdot)$ is the Moreau envelope of $\Phi(\cdot)$, which is defined after taking the max over $y$ (i.e., $\lambda$ in our notation) in Definition 3.5 in \cite{lin2020gradient}, and a point $x$ is $\epsilon$-stationarity when $\norm{\nabla \Phi_{1/2\ell}(x)}\leq \epsilon$. It is unclear if (10a) and (10c) in \cite{gu2022min} will imply similar convergence results under the notion of stationarity in Definition 3.7 in \cite{lin2020gradient}. 

\end{document}